\newenvironment{items}
{\begin{enumerate}[topsep=3pt, itemsep=3pt, parsep=0pt, label=(\roman*)]}
{\end{enumerate}}
\renewcommand{\tilde}{\widetilde}
\newcommand{\dual}[1]{#1^{\vee}}
\newcommand{\rel}{{\mbox{\tiny rel}}}
\newcommand{\lin}{{\mbox{\tiny lin}}}
\newcommand{\con}{{\mbox{\tiny con}}}
\newcommand{\argument}{{{\,\cdot\,}}}
\newcommand{\XX}{{\mathfrak X}}
\renewcommand{\AA}{{\mathfrak A}}
\newcommand{\A}{{\mathscr A}}
\newcommand{\B}{{\mathscr B}}
\newcommand{\C}{{\mathscr C}}
\newcommand{\M}{{\mathscr M}}
\newcommand{\N}{{\mathscr N}}
\renewcommand{\P}{{\mathscr P}}
\newcommand{\LL}{{\mathfrak L}}
\newcommand{\zz}{{\mathbb Z}}
\newcommand{\rr}{{\mathbb R}}
\renewcommand{\O}{{\mathscr O}}
\newcommand{\ev}{\mathop{\rm ev}\nolimits}
\newcommand{\Mor}{\mathop{\rm Mor}\nolimits}
\newcommand{\Map}{\mathop{{\rm Map}}\nolimits}
\newcommand{\iso}{\stackrel{\sim}{\rightarrow}}
\newcommand{\doublearrowstack}[2]%
                      {{{{\scriptstyle#1}\atop{\textstyle\longrightarrow}}\atop{{\textstyle\longrightarrow}\atop{\scriptstyle#2}}}}
\newcommand{\rightleftarrowstack}[2]%
                      {{{{\scriptstyle#1}\atop{\textstyle\longrightarrow}}\atop{{\textstyle\longleftarrow}\atop{\scriptstyle#2}}}}
\newcommand{\leftrightarrowstack}[2]%
                      {{{{\scriptstyle#1}\atop{\textstyle\longleftarrow}}\atop{{\textstyle\longrightarrow}\atop{\scriptstyle#2}}}}
\newtheorem{thm}{Theorem}[section]
\newtheorem{cor}[thm]{Corollary}
\newtheorem{lem}[thm]{Lemma}
\newtheorem{prop}[thm]{Proposition}
\newtheorem{defn}[thm]{Definition}
\newtheorem{rmk}[thm]{Remark}
\newtheorem{ex}[thm]{Example}
\newenvironment{pf}{\begin{trivlist}\item[]{\sc Proof.}}%
            {\nolinebreak $\Box$ \end{trivlist}}
\newenvironment{pfLooPathSp}{\begin{trivlist}\item[]{\sc Proof of Proposition~\ref{pro:paris}.}}%
            {\nolinebreak $\Box$ \end{trivlist}}
\newenvironment{proof}{\begin{trivlist}\item[]{\sc Proof.}}%
            {\nolinebreak $\Box$ \end{trivlist}}
\DeclareMathOperator\id{id}
\newcommand{\im}{\mathop{\rm im}\nolimits}
\newcommand{\rk}{\mathop{\rm rk}\nolimits}
\newcommand{\pr}{\mathop{\rm pr}\nolimits}
\newcommand{\Sym}{\mathop{\rm Sym}\nolimits}
\newcommand{\noprint}[1]{}
\newcommand{\xxto}[1]{\xrightarrow{#1}}
\newcommand{\mapnm}{{\Map(\N, \M)}}
\newcommand{\Z}{{\mathscr Z}}
\newcommand{\pathast}{\Gamma(I,a^\ast(TM\oplus L)\,dt\oplus a^\ast L)} 
\newcommand{\eetale}{a local diffeomorphism}
\newcommand{\fiber}{\text{fiber}}
\newcommand{\Ho}{\mathrm{Ho}}
\newcommand{\ddim}{\dim^{\text vir}}
\newcommand{\cntnL}{\nabla}
\newcommand{\cntnM}{{\nabla^{ M}}}
\renewcommand{\xto}[1]{\xrightarrow{#1}}
\newcommand{\onto}{\twoheadrightarrow}
\newcommand{\baseDI}{\breve{B}}
\newcommand{\totalDI}{\breve{P}}
\newcommand{\strDI}{\breve{D}}
\title{Differential graded manifolds of finite positive amplitude}
\author{Kai Behrend,
Hsuan-Yi Liao
and Ping Xu}
\date{}
\newcommand{\Addresses}{{
  \bigskip
  \footnotesize

  Kai Behrend, \textsc{Department of Mathematics,  University of British Columbia}\par\nopagebreak
  \textit{E-mail address}: \texttt{behrend@math.ubc.ca}

  \medskip

  Hsuan-Yi Liao, \textsc{Department of Mathematics, 
National Tsing Hua University}\par\nopagebreak
  \textit{E-mail address}: \texttt{hyliao@math.nthu.edu.tw}

  \medskip

  Ping Xu, \textsc{Department of Mathematics, Pennsylvania State University}\par\nopagebreak
  \textit{E-mail address}: \texttt{ping@math.psu.edu}

}}
\begin{document}
\sloppy

\maketitle


\begin{abstract}
{We prove that dg manifolds of finite positive amplitude, i.e.\ bundles of  positively graded curved
 $L_\infty[1]$-algebras, form a category of fibrant objects. 
As a main step in the proof, we obtain a factorization theorem using path spaces. 
First we construct an infinite-dimensional factorization of a diagonal morphism using actual path spaces motivated by the AKSZ construction. Then we cut down to finite dimensions using the Fiorenza-Manetti method. The main ingredient in our method is the homotopy transfer theorem for curved $L_\infty[1]$-algebras. 
As an application, we study the derived intersections of manifolds. 
} 
\end{abstract}

\let\thefootnote\relax\footnotetext{Research partially supported by NSF grants
 DMS-1707545 and DMS-2001599, KIAS Individual Grant MG072801 and MOST/NSTC Grant
110-2115-M-007-001-MY2.}

\tableofcontents

\section{Introduction}

This work is a contribution to the theory of derived manifolds in the
context of $C^\infty$-geometry. Recently,  there has been a growing interest
in derived differential geometry. The main purpose is to use  ``homotopy fibered product" in a proper sense to replace
 the fibered product, which is not always defined in classical differential
geometry. There has appeared quite many works in this 
direction in the literature  mainly motivated by derived algebraic geometry
of Lurie and To\"{e}n-Vezzosi \cite{MR2717174, MR2394633}--- see also \cite{MR3033634, eugster2021introduction}.
 For instance, see \cite{MR2641940,carchedi2012homological, MR3121621, MR3221297, borisov2011simplicial,  2018arXiv180407622P, macpherson2017universal, Nuiten, eugster2021introduction,2019arXiv190506195C}
for the $C^\infty$-setting and \cite{MR4036665, MR3959070} for
the analytic-setting.  
Our approach is based on
the geometry of dg manifolds of positive amplitude, or equivalently, bundles of positively graded curved $L_\infty[1]$-algebras. In spirit, it is analogous
to  the dg approach to derived algebraic geometry
 \cite{2002math.....12225B,2002math.....12226B,MR1801413,MR1839580,MR2496057}
and is closer to the approach of Carchedi-Roytenberg  \cite{carchedi2012homological,MR3121621}, by  avoiding the machinery of  $C^\infty$-ring.

For us, an {\em $L_\infty$-bundle} is a triple $\M=(M,L,\lambda)$,
where $M$ is a $C^\infty$-manifold, $L=L^1\oplus\ldots\oplus L^n$ is a finite-dimensional graded vector bundle over $M$, and $\lambda=(\lambda_k)_{0\leq k< n}$ is a smooth family of multilinear operations $\lambda_k:L^{\otimes k}\to L$ of degree $1$, making each fiber $L|_P$, where $P\in M$, a curved $L_\infty[1]$-algebra. 
 Equivalently, $\Sym_{\O_M}L^\vee$ is a sheaf of commutative differential graded algebras over the sheaf $\O_M$ of $C^\infty$-functions on $M$; that is, $L$ is a dg manifold of finite positive amplitude. We will mainly use the formulation of $L_\infty$-bundles since it is more geometric and more appropriate for our main construction.
 
We prove that $L_\infty$-bundles form a category of fibrant objects (see \cite{MR341469} for details on categories of fibrant objects) which contains the category of $C^\infty$-manifolds as a full subcategory.  
Therefore, we can make sense of ``homotopy fibered product'' for $L_\infty$-bundles. In particular, in the homotopy category of $L_\infty$-bundles, we can talk about derived intersection of two submanifolds in a smooth manifold $M$. See Section~\ref{sec:HptFibProd}. 

Recall that a {\em category of fibrant objects }is a category $\C$, together with two subcategories, the category of fibrations in $\C$, and the category of weak equivalences in $\C$, subject to a list of axioms (see Definition~\ref{def:CFO}). 
One is interested in the localization of $\C$ at the weak equivalences, i.e.\ the category obtained from $\C$ by formally inverting the weak equivalences.  This localization process
 gives rise to an $\infty$-category, whose associated 1-category is the {\em homotopy category }of $\C$.  The presence of the fibrations, satisfying the above axioms, simplifies the description of the localized category significantly.  In fact, Cisinsky \cite{MR2746284} has shown that the homotopy type of the space of morphisms from $X$ to $Y$ can be represented by the category of spans from $X$ to $Y$, where a span $X\leftarrow X'\to Y$ has the property that the backwards map $X'\to X$ is a trivial fibration.

The zeroth operation $\lambda_0$ is a global section of the vector bundle $L^1$ over $M$.  We call the zero locus of $\lambda_0$ the {\em classical locus }of the $L_\infty$-bundle $\M=(M,L,\lambda)$. At a classical point $P\in M$, the tangent map of $\lambda_0$ induces a map $D_P \lambda_0:TM|_P \to L^1|_P$ (see Section~\ref{sec:Etale} for details), and we can define the {\em tangent complex}
$$\xymatrix{
TM|_P\rto^{D_P\lambda_0}& L^1|_P\rto^{\lambda_1|_P}& L^2|_P\rto^{\lambda_1|_P}&\ldots}.$$
We define a morphism of $L_\infty$-bundles to be a {\em weak equivalence }if
\begin{items}
\item it induces a bijection on classical loci,
\item  the linear part  induces a quasi-isomorphism on tangent complexes at all classical points.
\end{items}
We define a morphism of $L_\infty$-bundles to be a {\em fibration}, if 
\begin{items}
\item the underlying morphism of manifolds is a differentiable submersion,
\item the linear part of the morphism of curved $L_\infty[1]$-algebras
 is a degreewise surjective morphism of graded vector bundles.
\end{items}
Our main theorem is the following:
\begin{trivlist}
\item {\bf Theorem A} (Theorem~\ref{thm:main})
 {\it The category of $L_\infty$-bundles is a category of fibrant objects.}
\end{trivlist}

Restricting, for example,  to $L_\infty$-bundles where $M$ is a single point,
 which is classical 
\cite{MR3832143,FioMan},
 we obtain the category of fibrant objects of
 finite-dimensional, positively graded, hence nilpotent $L_\infty[1]$-algebras,
 where weak equivalences are quasi-isomorphisms, and fibrations are 
degreewise surjections  (see also \cite{MR1843805, MR2628795}).
On the other extreme, restricting to $L_\infty$-bundles with $L=0$, we obtain
 the category of smooth manifolds, with diffeomorphisms for weak equivalences and differentiable submersions as fibrations.
Our choice of definition of  $L_\infty$-bundles
 is chosen to be minimal, subject to including both of these extreme cases.

Most of the proof of Theorem~A is straightforward.  The non-trivial part is the proof of the factorization property.  It is well-known, that it is sufficient to prove factorization for diagonals:

\begin{trivlist}
\item {\bf Theorem B} (Theorem~\ref{thm:Seoul})
 {\it The diagonal morphism $\M \to \M \times \M$ of an $L_\infty$-bundle can be factorized as the composition of a fibration and a weak equivalence.}
\end{trivlist}

 The proof of Theorem~B proceeds in two stages.

First, we factor the diagonal of an $L_\infty$-bundle $\M$ as 
$$\M\longrightarrow PT\M[-1]\longrightarrow\M\times\M\,.$$
Here, $PT\M[-1]$ is the path space $\Map(I,T\M[-1])$ 
of the shifted tangent bundle $T\M[-1]$, where $I$ is an open interval containing $[0,1]$.  
Another way to view $\Map(I,T\M[-1])$ is as $\Map(TI[1],\M)$, where $TI[1]$ is the shifted tangent bundle of the interval $I$. Note
that  $TI[1]$ is not an $L_\infty$-bundle in our sense (the corresponding graded vector bundle is not concentrated in positive degrees);
this is essentially the viewpoint of  AKSZ construction \cite{MR1432574}. See 
Appendix~\ref{sec:AKSZ}.

The path space $PT\M[-1]$ is infinite-dimensional, so in a second step we pass to a finite-dimensional model by using homotopical perturbation theory, in this case the curved $L_\infty[1]$-transfer theorem  \cite{ezra,MR1950958,MR3276839}.  

In fact, it is convenient to break up this second step into two subitems. 
The first step is to restrict $PT\M[-1]$ to $P_gM\subset PM$, a manifold of {\em short geodesic paths} with respect to an affine connection on $M$, where $PM=\Map(I,M)$ is the base manifold of $PT\M[-1]$.
 This means that we restrict to paths which are sufficiently short (or, in other words, sufficiently slowly traversed) so that evaluation at $0$ and $1$ defines an open embedding $P_gM\to M\times M$.  
Note that as a smooth manifold
$P_gM$ can be identified with an open neighborhood of the zero section
of $TM$, the open embedding $P_gM\to M\times M$ follows from the
tubular  neighborhood theorem of smooth manifolds.

In the second step, we choose a linear connection on $L$, and apply the $L_\infty[1]$-transfer theorem fiberwise over $P_gM$ to cut down the graded vector bundle to finite dimensions. 
Our method in this step is inspired by the Fiorenza-Manetti
approach in \cite{MR3832143, FioMan}.

After this process, we are left with the following  finite-dimensional
\emph{derived  path space $\P\M$}. The base manifold is
 $P_gM\subset M\times M$. Over this we have the
 graded vector bundle
\begin{equation}\label{grvb}
P_\con(TM\oplus L)\,dt\oplus P_\lin L\,.
\end{equation}
The fiber over a (short geodesic) path $a:I\to M$ of this graded vector bundle is the direct sum of the space of covariant constant sections in $\Gamma\big(I,a^\ast (TM\oplus L)\big)$, shifted up in degree by multiplying with the formal symbol $dt$ of degree $1$,  and the covariant affine-linear sections in $\Gamma(I,a^\ast L)$. 
The operations $(\mu_k)_{0\leq k<n+1}$ on the graded vector bundle
 (\ref{grvb}) are determined by the transfer process. They can all be made explicit as (finite) sums over rooted trees, and 
 therefore are all smooth operations. Note that as a graded vector bundle,
 $P_\con (TM\oplus L)\,dt\oplus P_\lin L$ is in fact isomorphic to 
\begin{equation}
\ev_0^* (TM\oplus  L)\,dt \oplus \ev_0^* L \oplus \ev_1^*L 
\end{equation}
where $\ev_0, \ \ev_1: P_gM \subset M\times M\to M$ denote the restriction of projection to
the first and second component, respectively.

To prove that 
\begin{items}
\item the object (\ref{grvb}) is, indeed, a vector bundle over the manifold $P_gM$, and
\item the operations $(\mu_k)$ on (\ref{grvb}) are differentiable, 
\end{items}
we only make use of standard facts about bundles, connections, and parallel
 transport since the  formula for the $L_\infty[1]$-algebra operations
 in the transfer theorem is explicit and universal. 
Although one can view the path spaces $PT\M[-1]$ of the AKSZ construction
\cite{MR1432574} as
infinite-dimensional  manifolds  \cite{MR583436},
the smooth structure on   
$PT\M[-1]$ is actually never used.

Some remarks are in order. In \cite{BLX2}, we
 further investigate properties of fibrations of $L_\infty$-bundles.
 As applications of the factorization theorem, we
  prove an inverse function theorem for $L_\infty$-bundles and
study the relation between weak equivalence and quasi-isomorphism
of $L_\infty$-bundles. In addition, we apply the inverse function theorem to prove that the homotopy category of $L_\infty$-bundles has a simple description in terms of homotopy classes of morphisms, when we restrict $L_\infty$-bundles to their germs about their classical loci. 
In a work in progress, we will study derived differentiable stacks (and, in particular, derived orbifolds)
 in terms of Lie groupoids in the category of $L_\infty$-bundles along a similar line as in \cite{MR2817778}. 
It would be interesting to investigate the relation between
derived orbifolds and the Kuranishi spaces of  Fukaya-Oh-Ohta-Ono
 \cite{fukaya2015kuranishi, MR3931096}. 
We expect that derived orbifolds in our sense will provide a useful tool for the study of symplectic reduction. In particular, it would be interesting  to explore
the relation  between ``symplectic derived orbifolds" 
with  the Sjamaar-Lerman theory of stratified symplectic spaces and reduction \cite{MR1127479}. 
It would also be worth investigating the relation between our
$L_\infty$-bundles and Costello's ``$L_\infty$-spaces" \cite{costello2011geometric, MR3358542, MR4079151}. 
After the posting of the first e-print version \cite{2020arXiv200601376B} on
 arXiv.org, Pridham informed us that he can construct a model category for which our category of $L_\infty$-bundles is the subcategory of fibrant objects. (See \cite{MR4036665} for related results.) 
Also, Carchedi and Roytenberg informed us that they made a conjecture to the effect that non-positively graded dg-manifolds form a category of fibrant objects in 2013.

\subsection*{Notations and conventions}

Differentiable means $C^\infty$. Manifold means differentiable manifold, which includes second countable and Hausdorff as part of the definition.  Hence manifolds admit partitions of unity, which implies that vector bundles admit connections, and fiberwise surjective homomorphisms of vector bundles admit sections.

For any graded vector bundle $E$ over a manifold $M$, we sometimes use the same symbol $E$ to denote the sheaf of its sections over $M$, by abuse of notation. 
In particular, for a graded vector bundle $L$ over $M$, by
$\Sym_{\O_M} L^\vee$, or simply $\Sym L^\vee$, we denote the sheaf of sections of the graded vector bundle $\Sym L^\vee$ over $M$, i.e.\ the sheaf of
fiberwise polynomial functions on $L$.

The notation $| \cdot |$ denotes the degree of an element. When we use this notation, we assume the input is a homogeneous element.

We will need two types of connections --- affine connections on a manifold and linear connections on a vector bundle. For a vector bundle $L$ over $M$, we denote an affine connection on $M$ by $\cntnM$ and denote a linear connection on $L$ by $\cntnL$.

\subsection*{Acknowledgments}
We would like to thank several institutions for their hospitality,
 which allowed the completion of this project: Pennsylvania State University
 (Liao),  University of British Columbia (Liao and Xu),
National Center for Theoretical Science  (Liao), KIAS (Xu), Institut des Hautes \'{E}tudes Scientifiques (Liao) and Institut Henri Poincar\'{e} (Behrend, Liao and Xu). 
We also wish to thank  Ruggero Bandiera, 
David Carchedi, 
Alberto Cattaneo, 
 David Favero,
 Domenico Fiorenza, Ezra Getzler,
 Owen Gwilliam,
 Vladimir Hinich,
 Bumsig Kim,
 Marco Manetti, Raja Mehta, Pavel Mnev,
Joost Nuiten, 
Jonathan Pridham, 
 Dima Roytenberg,
Pavol Severa,  Mathieu Sti\'enon and Ted Voronov
 for fruitful discussions and useful comments.

\section{Preliminaries}

\subsection{Categories of fibrant objects}\label{sec:CFO}

The notion of categories of fibrant objects was introduced by Kenneth Brown \cite{MR341469}. 
Below we recall a few basic facts about categories of fibrant objects. 
For details, we refer the reader to \cite{MR341469}.

\begin{defn}\label{def:CFO}
Let $\C$ be a category with finite products and a final object. 
We say $\C$ is a \textit{category of fibrant objects} if it has two distinguished classes of morphisms, called \textit{weak equivalences} and \textit{fibrations}, and satisfies the following axioms:
\begin{items}
\item\label{axiom:2out3} Weak equivalences satisfy {\it two out of three}, i.e. if $f$ and $g$ are two morphisms such that $fg$ is defined, and if two of $f$, $g$, $fg$ are weak equivalences, then the third one must be also a weak equivalence. All isomorphisms are weak equivalences.
\item\label{axiom:Fibration} The composition of two fibrations is a fibration. All isomorphisms are fibrations.
\item\label{axiom:PullbackFib} Every base change (i.e. pull back) of a fibration exists, and is again a fibration. That is, given a diagram $X \xrightarrow{f} Z \xleftarrow{g} Y$ with $g$ a fibration, the fibered product $X \times_Z Y$ exists and the projection $X \times_Z Y \to X$ is a fibration. 
\item\label{axiom:PullbackTriFib} Every base change of a {\it trivial fibration} (i.e. a fibration which is also a weak equivalence) is a trivial fibration.
\item\label{axiom:FibrantObj} Every object is \textit{fibrant} (i.e. the morphism from an object to the final object is a fibration).
\item\label{axiom:Factorization} Every morphism can be factored as a weak equivalence followed by a fibration.
\end{items}
\end{defn}

\begin{ex}\label{ex:CFOcomplex}
Let $\A$ be an abelian category. The category of cochain complexes in $\A$ (infinite in both directions) is a category of fibrant objects. Here weak equivalences are quasi-isomorphisms of complexes, and fibrations are surjective cochain maps. See \cite{MR341469,MR0222093}.
\end{ex}

\begin{defn}
Let $\C$ be a category of fibrant objects. A \textit{path space object} for an object $X$ in $\C$ is an object $P_X$ together with morphisms 
\begin{equation}\label{eq:PathSpDecomp}
\xymatrix{
X \ar[r]^-s & P_X \ar[r]^-{e} & X \times X,
}
\end{equation}
where $s$ is a weak equivalence, $e$ is a fibration, and the composition $e \circ s$ is the diagonal map
\begin{equation}\label{eq:diagonal}
\Delta: X \to X \times X.
\end{equation}
Decomposition~\eqref{eq:PathSpDecomp} will be referred to as a \textit{path space decomposition}.
\end{defn}

Given a category of fibrant objects $\C$, consider its \emph{homotopy category} $\Ho(\C)$ which is obtained by formally inverting weak equivalences.

\begin{lem}\label{lem:UniquePathSp}
Given any object $X$ in a category of fibrant objects $\C$, the decomposition \eqref{eq:PathSpDecomp} is unique in the sense that if $X \to P_X' \to X \times X$ is another path space decomposition, then there exists a third path space object $P_X''$ together with trivial fibrations $P_X \leftarrow P_X'' \to P_X'$ such that the following diagram is commutative.
\begin{equation}\label{eq:UniquePathSp}
\begin{tikzcd}
 & P_X \ar[rd] &  \\
X \ar[ru] \ar[r] \ar[rd] & P_X'' \ar[r] \ar[u] \ar[d] & X \times X  \\
 & P_X' \ar[ru] & 
\end{tikzcd}
\end{equation}
In particular, the path space object $P_X$ for $X$ is unique up to isomorphisms in  the homotopy category $\Ho(\C)$.
\end{lem}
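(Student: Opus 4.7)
The plan is to construct $P_X''$ by forming the fibered product of $P_X$ and $P_X'$ over $X \times X$ and then applying the factorization axiom~\ref{axiom:Factorization} to the morphism from $X$ induced by the two weak equivalences $s$ and $s'$.

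For the first step, I form $Q_0 := P_X \times_{X \times X} P_X'$. By axiom~\ref{axiom:PullbackFib} this pullback exists, since $e'$ is a fibration, and both projections $\alpha_0 : Q_0 \to P_X$ and $\beta_0 : Q_0 \to P_X'$ are fibrations. The identity $e \circ s = \Delta = e' \circ s'$, together with the universal property of the pullback, yields a canonical morphism $\sigma_0 : X \to Q_0$ with $\alpha_0 \circ \sigma_0 = s$ and $\beta_0 \circ \sigma_0 = s'$.

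Next, I apply axiom~\ref{axiom:Factorization} to $\sigma_0$, obtaining a factorization $\sigma_0 = f \circ w$ with $w : X \to P_X''$ a weak equivalence and $f : P_X'' \to Q_0$ a fibration. Setting $\alpha := \alpha_0 \circ f$ and $\beta := \beta_0 \circ f$, each is a fibration as a composition of fibrations. Since $\alpha \circ w = \alpha_0 \circ \sigma_0 = s$ is a weak equivalence and $w$ is a weak equivalence, axiom~\ref{axiom:2out3} forces $\alpha$ to be a weak equivalence, hence a trivial fibration; the same reasoning gives that $\beta$ is a trivial fibration. Defining $e'' := e \circ \alpha = e' \circ \beta : P_X'' \to X \times X$ yields a fibration with $e'' \circ w = \Delta$, so $X \xrightarrow{w} P_X'' \xrightarrow{e''} X \times X$ is a third path space decomposition of $\Delta$, and commutativity of diagram~\eqref{eq:UniquePathSp} is immediate from the definitions. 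The concluding assertion about uniqueness in $\Ho(\C)$ then follows because the span $P_X \xleftarrow{\alpha} P_X'' \xrightarrow{\beta} P_X'$ of trivial fibrations descends to an isomorphism in $\Ho(\C)$.

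I do not foresee a serious obstacle: the two structural moves --- pullback along a fibration and weak-equivalence/fibration factorization of an arbitrary morphism --- are supplied directly by axioms~\ref{axiom:PullbackFib} and~\ref{axiom:Factorization}, while axiom~\ref{axiom:2out3} mechanically upgrades the induced fibrations $\alpha$ and $\beta$ to \emph{trivial} fibrations. The only step requiring any care is checking that $\alpha$, $\beta$, and $e''$ fit together as in~\eqref{eq:UniquePathSp}, which reduces to a short diagram chase through the defining relations $\alpha = \alpha_0 \circ f$, $\beta = \beta_0 \circ f$, $\alpha_0 \circ \sigma_0 = s$, and $\beta_0 \circ \sigma_0 = s'$.
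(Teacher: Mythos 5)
Your proof is correct and follows essentially the same route as the paper: form the fibered product $P_X \times_{X\times X} P_X'$ (which exists since $e'$ is a fibration), factor the induced map $(s,s')$ from $X$ into a weak equivalence followed by a fibration, and use two-out-of-three to see that the composed projections to $P_X$ and $P_X'$ are trivial fibrations, with $e''$ defined by composition giving the third path space decomposition. The only difference is cosmetic (you name the induced map $\sigma_0$ before factoring it, while the paper factors $(s,s')$ directly).
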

\begin{pf}
Let $X \xto{s} P_X \xto{e} X\times X$ and $X \xto{s'} P_X' \xto{e'} X\times X$ be two path space decompositions for $X$. Let $A = P_X \times_{X \times X} P_X'$. By Axiom~\ref{axiom:Factorization}, there exists a weak equivalence $s'':X \to P_X''$ and a fibration $f:P_X'' \onto A$ such that $f \circ s'' = (s,s'): X \to A$. We denote $f_1 =\pr_1 \circ f: P_X'' \to A \to P_X$ and $f_2 =\pr_2 \circ f: P_X'' \to A \to P_X'$. The morphisms are summarized in the following commutative diagram:
 $$
\begin{tikzcd}
 & P_X  &  \\
X \ar[ru,"s"] \ar[r,"s''"] \ar[rd,"s'"'] & P_X'' \ar[r,two heads,"f"] \ar[u,two heads,"f_1"'] \ar[d,two heads,"f_2"] & A \ar[lu,two heads, "\pr_1"'] \ar[ld,two heads, "\pr_2"] \\
 & P_X'  & 
\end{tikzcd}
$$
Note that since $s = f_1 \circ s''$, it follows from Axiom~\ref{axiom:2out3} that $f_1$ is a trivial fibration. Similarly, so is $f_2$.

Now let $e'' = e \circ f_1:P_X'' \to X \times X$. It is a fibration since so are $e$ and $f_1$. Furthermore, 
$$
e'' \circ s'' = e \circ f_1 \circ s'' = e \circ s = \Delta,
$$
where $\Delta$ is the diagonal map \eqref{eq:diagonal}. 
Since $e \circ \pr_1 = e' \circ \pr_2:A \to X \times X$, we have that $e'' = e\circ f_1 = e' \circ f_2$. Thus, the diagram~\eqref{eq:UniquePathSp} commutes, and the proof is complete.
\end{pf}

In fact, the existence of path space objects is equivalent to Axiom~\ref{axiom:Factorization} provided all the other axioms in Definition~\ref{def:CFO} hold. 

\begin{lem}{\bf(\cite[Factorization lemma]{MR341469})}
Let $\C$ be a category which satisfies all the axioms in Definition~\ref{def:CFO} except \ref{axiom:Factorization}. If every object in $\C$ has a path space object, then $\C$ is a category of fibrant objects. 
\end{lem}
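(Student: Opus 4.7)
The plan is to verify Axiom~\ref{axiom:Factorization} (factorization) assuming all other axioms of Definition~\ref{def:CFO} and the existence of path space objects. Given an arbitrary morphism $f:X \to Y$ in $\C$, I aim to produce a factorization $f = \pi \circ i$ with $i$ a weak equivalence and $\pi$ a fibration.

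First, I would extract the relevant structure from a path space $Y \xto{s} P_Y \xto{(d_0,d_1)} Y\times Y$ for $Y$. Since $Y\times Y$ is a pullback of $Y\to\ast$ (which is a fibration by Axiom~\ref{axiom:FibrantObj}), the projections $\pr_1,\pr_2:Y\times Y\to Y$ are fibrations by Axiom~\ref{axiom:PullbackFib}. Composing with $(d_0,d_1)$ shows the endpoint evaluations $d_0,d_1:P_Y\to Y$ are fibrations. Because $d_0\circ s=d_1\circ s=\id_Y$ and $s$ is a weak equivalence, Axiom~\ref{axiom:2out3} forces $d_0$ and $d_1$ to be \emph{trivial} fibrations.

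Next, I would form the pullback
$$
\begin{tikzcd}
W \ar[r, "q"] \ar[d, "p"'] & P_Y \ar[d, "d_0"] \\
X \ar[r, "f"'] & Y
\end{tikzcd}
$$
using Axiom~\ref{axiom:PullbackFib}. By Axiom~\ref{axiom:PullbackTriFib}, $p:W\to X$ is a trivial fibration. The universal property applied to $\id_X$ and $s\circ f$ (which agree after composition with $f$ and $d_0$) produces a morphism $i:X\to W$ with $p\circ i=\id_X$ and $q\circ i=s\circ f$. Since $p\circ i=\id_X$ and $p$ are both weak equivalences, Axiom~\ref{axiom:2out3} yields that $i$ is a weak equivalence. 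Setting $\pi:=d_1\circ q$, we obtain $\pi\circ i=d_1\circ s\circ f=f$, giving the required factorization provided $\pi$ is a fibration.

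The only subtle point, and the main obstacle, is verifying that $\pi$ is a fibration: the composition $d_1\circ q$ is not visibly one, because $q$ is merely a base change of the arbitrary morphism $f$. The key trick is to realize $W$ as a pullback in a different way, namely that the square
$$
\begin{tikzcd}
W \ar[r, "q"] \ar[d, "{(p,\pi)}"'] & P_Y \ar[d, "{(d_0,d_1)}"] \\
X\times Y \ar[r, "f\times \id_Y"'] & Y\times Y
\end{tikzcd}
$$
is also cartesian, since both pullbacks describe the same object (an element of the lower pullback is $((x,y),\gamma)$ with $f(x)=d_0(\gamma)$ and $y=d_1(\gamma)$, and the $Y$-component is redundant). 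Hence $(p,\pi):W\to X\times Y$ is a base change of the fibration $(d_0,d_1)$ and is itself a fibration by Axiom~\ref{axiom:PullbackFib}. Composing with the fibration $\pr_Y:X\times Y\to Y$ (which is a fibration by Axioms~\ref{axiom:FibrantObj} and \ref{axiom:PullbackFib}) shows $\pi$ is a fibration, completing the factorization. The rest of the argument is a formal manipulation of the axioms; this double-pullback reinterpretation of $W$ is the only nontrivial conceptual ingredient.
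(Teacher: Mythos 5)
Your proof is correct and is exactly the classical argument of Brown's Factorization Lemma, which the paper cites rather than reproves: factor $f$ through $W=X\times_Y P_Y$, note $p$ is a trivial fibration so the section $i=(\id_X,s\circ f)$ is a weak equivalence, and recognize $(p,\pi)$ as the base change of $(d_0,d_1)$ along $f\times\id_Y$ (rigorously, by pasting of pullback squares) to see that $\pi=d_1\circ q$ is a fibration. No gaps; the element-style verification of the second cartesian square is the only informality, and it is immediately repaired by the pullback pasting lemma.
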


For given arbitrary morphisms $X \to Z \leftarrow Y$  in $\C$, one can form a \emph{homotopy fibered product} $X \times_Z^h Y$ in the homotopy category $\Ho(\C)$.  If $P_Z$ is a path space object for $Z$, then $X \times_Z P_Z \times_Z Y$ is a representative of the homotopy fibered product $X \times_Z^h Y$. 
The following lemma is standard for categories of fibrant objects. We sketch a proof for completeness.

\begin{lem}
Let $\C$ be a category of fibrant objects. The isomorphism class of $X \times_Z P_Z \times_Z Y$ in the homotopy category $\Ho(\C)$ is independent of the choice of the path space object $P_Z$. 
\end{lem}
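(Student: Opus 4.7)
The plan is to reduce the independence statement to an application of Lemma~\ref{lem:UniquePathSp} combined with the base-change axiom~\ref{axiom:PullbackTriFib}. Given two path space objects $P_Z$ and $P_Z'$ for $Z$, Lemma~\ref{lem:UniquePathSp} provides a third path space object $P_Z''$ together with trivial fibrations $f: P_Z'' \to P_Z$ and $f': P_Z'' \to P_Z'$ compatible with the structure maps to $Z \times Z$. It thus suffices to show that each induced morphism $X \times_Z P_Z'' \times_Z Y \to X \times_Z P_Z \times_Z Y$ is a trivial fibration, since then the chain
$$X \times_Z P_Z \times_Z Y \;\longleftarrow\; X \times_Z P_Z'' \times_Z Y \;\longrightarrow\; X \times_Z P_Z' \times_Z Y$$
becomes a zigzag of weak equivalences, yielding the desired isomorphism in $\Ho(\C)$.

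Next I would rewrite the triple fibered product in a more convenient form. Writing $e = (e_0,e_1): P_Z \to Z \times Z$ for the fibration in the path space decomposition, one has the natural identification
$$X \times_Z P_Z \times_Z Y \;\cong\; P_Z \times_{Z \times Z}(X \times Y),$$
where $X \times Y \to Z \times Z$ is the product of the structure maps. The fibered product on the right exists by axiom~\ref{axiom:PullbackFib}, because $e$ is a fibration. The analogous identification holds with $P_Z$ replaced by $P_Z''$ or $P_Z'$.

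Under this identification, the induced morphism becomes
$$P_Z'' \times_{Z \times Z}(X \times Y) \;\longrightarrow\; P_Z \times_{Z \times Z}(X \times Y),$$
and the key observation is that this is literally the base change of the trivial fibration $f: P_Z'' \to P_Z$ along the projection $P_Z \times_{Z \times Z}(X \times Y) \to P_Z$; indeed,
$$P_Z'' \times_{Z \times Z}(X \times Y) \;\cong\; P_Z'' \times_{P_Z}\bigl(P_Z \times_{Z \times Z}(X \times Y)\bigr).$$
By axiom~\ref{axiom:PullbackTriFib}, base changes of trivial fibrations are trivial fibrations, so the induced morphism is a trivial fibration. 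Applying the same argument to $f'$ completes the construction of the zigzag, and hence the proof.

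The only mild subtlety, which I expect to be the main bookkeeping step rather than a genuine obstacle, is verifying that the identification $X \times_Z P_Z \times_Z Y \cong P_Z \times_{Z \times Z}(X \times Y)$ is compatible with the morphisms induced by $f$ and $f'$, so that the induced map really is a base change in the strict sense; this is a formal manipulation of universal properties and requires no additional axiom of the category of fibrant objects.
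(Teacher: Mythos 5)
Your proposal is correct and follows essentially the same route as the paper: invoke Lemma~\ref{lem:UniquePathSp} to compare the two path space objects via (a zigzag of) trivial fibrations compatible with the maps to $Z\times Z$, and then conclude by base-changing those trivial fibrations (Axioms~\ref{axiom:PullbackFib} and~\ref{axiom:PullbackTriFib}) to obtain trivial fibrations between the triple fibered products. The only cosmetic difference is that the paper performs the base change in two stages, first along $X\times_Z P_Z\to P_Z$ and then along $(X\times_Z P_Z)\times_Z Y\to X\times_Z P_Z$, whereas you reassociate the triple fibered product as $P_Z\times_{Z\times Z}(X\times Y)$ and base change once.
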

\begin{pf}
Let $Z \xto{s} P_Z \xto{e} Z\times Z$ and $Z \xto{s'} P_Z' \xto{e'} Z\times Z$ be two path space decompositions for $Z$.  
By Lemma~\ref{lem:UniquePathSp}, we may assume there is a trivial fibration $f:P_Z' \to P_Z$ such that $s = f \circ s'$ and $e' = e \circ f$. 

We pull back $f:P_Z' \to P_Z$ along $X \times_Z P_Z \to P_Z$ and obtain a trivial fibration $t : X \times_Z P_Z'  \to X \times_{Z} P_Z.$ 
Since $e' = e \circ f$, we have the commutative diagram:
$$
\begin{tikzcd}
X \times_Z P_Z' \ar[d,two heads,"t"'] \ar[r] & P_Z' \ar[d,two heads,"f"'] \ar[rd,"\ev_1'"] &  \\
X \times_Z P_Z \ar[r] & P_Z \ar[r,"\ev_1"'] & Z
\end{tikzcd}
$$
Thus,  
$$
X \times_Z P_Z' \times_Z Y \cong (X \times_Z P_Z') \times_{X \times_{Z} P_Z} ((X \times_Z P_Z) \times_Z Y). 
$$
Consequently, by pulling back $t$, we obtain another trivial fibration $t':X \times_Z P_Z' \times_Z Y \to X \times_Z P_Z \times_Z Y$
$$
\begin{tikzcd}
X \times_Z P_Z' \ar[d,two heads,"t"']  & \ar[l] X \times_Z P_Z' \times_Z Y \ar[d,two heads,"t'"]   \\
X \times_Z P_Z  & \ar[l] X \times_Z P_Z \times_Z Y,
\end{tikzcd}
$$ 
and hence the two representatives $X \times_Z P_Z' \times_Z Y$ and $ X \times_Z P_Z \times_Z Y$ are isomorphic in $\Ho(\C)$.
\end{pf}

\subsection{Curved $L_\infty[1]$-algebras}\label{linfty}

We will briefly review the notion of (positively graded) curved $L_\infty[1]$-algebras. We refer the reader to \cite{MR2931635,MR1235010,MR1327129,MR1183483} for a general introduction to $L_\infty$-algebras.

Let $L$ and $E$ be graded vector spaces, concentrated in finitely many positive degrees. 
Following \cite{ezra}, we denote by $\Sym^{n,i}(L,E)$, for $n\geq0$,
the vector space of graded symmetric multilinear maps 
$$\underbrace{L\times\ldots\times L}_{n}\longrightarrow E$$
of degree $i$. 
Note that $\Sym^{n,i}(L,E)$ vanishes, as soon as $n+i$ exceeds the
highest degree of $E$. We write
$$\Sym^i(L,E)=\bigoplus_{n\geq0}\Sym^{n,i}(L,E)\,.$$
There is a binary operation 
$$\Sym^i(L,E)\times \Sym^j(L,L)\longrightarrow \Sym^{i+j}(L,E)\,,$$
defined by 
$$(\lambda\circ\mu)_n(x_1,\ldots,x_n)=
\sum_{\sigma\in
  S_n}(-1)^\epsilon\sum_{k=0}^n\frac{1}{k!(n-k)!}\,
\lambda_{n+1-k}\big(\mu_k(x_{\sigma(1)}\ldots),
\ldots,x_{\sigma(n)}\big)\,,$$ 
for $x_1,\ldots,x_n$ elements of $L$. Here $(-1)^\epsilon$ is determined by Koszul sign convention. This operation is linear in each argument. It is not associative, but it does satisfy the pre-Lie
algebra axiom.  In particular, taking $E=L$, the graded commutator with respect to $\circ$ defines the structure of a graded Lie algebra on 
$$\Sym(L,L)=\bigoplus_{i}\Sym^i(L,L)\,.$$

If $\delta$ is a differential in $L$, then $[\delta,\argument]$
is an induced differential on $\Sym(L,L)$ (acting by derivations with respect to the bracket), turning $\Sym(L,L)$ into a differential graded Lie algebra. 

\begin{defn} 
Given a complex $(L,\delta)$,  the structure of a {\it curved $L_\infty[1]$-algebra} on $(L,\delta)$ is a Maurer-Cartan element in $\Sym(L,L)$, i.e. an element $\lambda$ of  $\Sym^1(L,L)$, satisfying the Maurer-Cartan equation
\begin{equation}
[\delta,\lambda]+\lambda\circ\lambda=0\,.
\end{equation}
\end{defn}

The case where $\delta=0$ is no less general than the case of arbitrary $\delta$, because we can always incorporate $\delta$ into $\lambda$ by adding it to $\lambda_1$.  In the case where $\delta=0$, the Maurer-Cartan equation reduces to 
$$\lambda\circ\lambda=0\,.$$
Nevertheless, for the transfer theorem, the case of general $\delta$ will be important to us.

There is another binary operation
$$\Sym^i(L,L)\times\Sym^0(E,L)\longrightarrow \Sym^i(E,L)\,,$$
defined by 
$$
(\lambda\bullet\phi)_n(x_1,\ldots,x_n)
=\sum_{\sigma\in S_n}(-1)^\epsilon 
\sum_{k=0}^n\frac{1}{k!}\sum_{n_1+\ldots+n_k=n}\frac{1}{n_1!\ldots n_k!}\lambda_k\big(\phi_{n_1}(x_{\sigma(1)},\ldots),\dots,\phi_{n_k}(\ldots,x_{\sigma(n)})\big)\,.
$$
This operation is linear only in the first argument, but it is associative. 

\begin{defn}
If $\lambda$ and $\mu$ are curved $L_\infty[1]$-structures on $(L,\delta)$ and $(E,\delta')$, respectively, and $\phi\in \Sym^0(L,E)$, then $\phi$ is a {\it morphism of curved $L_\infty[1]$-algebras} if 
\begin{equation}\label{eq:LooMor}
\phi\circ(\delta+\lambda)=(\delta'+\mu)\bullet\phi\,.
\end{equation}
\end{defn}

It is straightforward to show that any composition of morphisms of curved $L_\infty[1]$-algebras is another morphism of curved $L_\infty[1]$-algebras. See \cite{ezra} for details.

For example, 
if $\lambda$ is a curved $L_\infty[1]$-structure on $(L,\delta)$, and $E\subset L$ is a subcomplex preserved by all operations $\lambda$, then the inclusion $E\to L$, without any higher correction terms, is a morphisms of curved $L_\infty[1]$-algebras. We say that $E$ is a curved $L_\infty[1]$-subalgebra of $L$. 

\subsection{The transfer theorem for curved $L_\infty[1]$-algebras}\label{tranthem}

Let $(L,\delta)$  be
 a cochain complex of vector spaces.  Assume that $L$ is concentrated in finitely many positive degrees. Assume  also  given a descending filtration 
$$L=F_0L\supset F_1L\supset\ldots$$
on $L$, such that both the  grading and the differential $\delta$ are compatible with $F$. 
We assume that   $F_k L=0$, for $k\gg0$.

\begin{ex}
A {\em natural filtration }of $L$ is given by 
$$F_k L=\bigoplus_{i\geq k}L^i\,.$$
Every $L_\infty[1]$-structure on $L$ is filtered with respect to this
natural filtration. 
\end{ex}

Let $\eta$ be a linear operator of degree $-1$ on $L$, which is compatible with the filtration $F$, and satisfies the two conditions
$$\eta^2=0\,,\qquad \eta\delta\eta=\eta\,.$$
(We call $\eta$ a {\em contraction} of $\delta$.)

Under these hypotheses, $\delta\eta$ and $\eta\delta$ are idempotent operators on $L$.  We define $$H=\ker[\delta,\eta]=\ker(\delta\eta)\cap\ker(\eta\delta)=\im(\id_L-[\delta,\eta])\,.$$  
Let us write $\iota:H\to L$ for the inclusion, and $\pi:L\to H$ for the projection. We have 
$$\iota \pi=\id_L-[\delta,\eta]\,, \quad\text{and}\quad \pi\iota=\id_H\,.$$ 
We write $\delta$ also for the induced differential on $H$. Then $\iota:(H,\delta)\to (L,\delta)$ is a homotopy equivalence with homotopy inverse $\pi$. 

Let $\lambda=(\lambda_k)_{k\geq0}$ be a curved $L_\infty[1]$-structure in $(L,\delta)$.  We assume that this $L_\infty[1]$-structure is {\em nilpotent}, 
i.e.\ that  $\lambda$  increases the filtration degree by 1. This means that, for $n\geq1$, we have  $\lambda_n(F_{k_1},\ldots,F_{k_n})\subset F_{k_1+\ldots+k_n+1}$.  It puts no restriction on $\lambda_0$.  (This convention on $\lambda_0$ is different from \cite{MR1950958,ezra}.  In \cite{MR1950958,ezra}, it is assumed that the image of $\lambda_0$ is contained in $F_1L^1$.  We do not need this stronger requirement, because all our $L_\infty[1]$-algebras are in bounded degree.)

\begin{prop}[Transfer Theorem]\label{transfertheorem}
There is a unique  $\phi\in \Sym^0(H,L)$ satisfying the equation
\begin{equation}\label{recu}
\phi=\iota-\eta \lambda\bullet\phi\,.
\end{equation}
Setting
$$\mu=\pi\lambda\bullet\phi\in \Sym^1(H,H)$$ defines a curved $L_\infty[1]$-structure on $(H,\delta)$, such that $\phi$ is a morphism of curved $L_\infty[1]$-structures from $(H,\delta+\mu)$ to $(L,\delta+\lambda)$. 

Furthermore, there exists a morphism of curved $L_\infty[1]$-algebras $\tilde\pi:(L, \delta + \lambda) \to (H, \delta + \nu)$ satisfying the equations 
\begin{align}
& \tilde\pi_1 = \pi - \tilde\pi_1 \lambda_1 \eta, \\
& \tilde\pi \bullet \phi = \id_H.
\end{align}
\end{prop}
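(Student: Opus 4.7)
My plan is to split the theorem into three parts: (1) existence and uniqueness of $\phi$, (2) the Maurer--Cartan property of $\mu$ together with the morphism property of $\phi$, and (3) the construction of $\tilde\pi$ and the identity $\tilde\pi\bullet\phi=\id_H$. The engine throughout will be the nilpotency of $\lambda$ with respect to the filtration $F_\bullet L$: every $\bullet$-type recursion involving $\lambda$ converges after finitely many steps because $\lambda$ strictly raises filtration degree and $F_N L=0$ for $N\gg 0$.

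For part (1), I would solve $\phi=\iota-\eta\lambda\bullet\phi$ by iteration. Setting $\phi^{(0)}=\iota$ and $\phi^{(k+1)}=\iota-\eta\lambda\bullet\phi^{(k)}$, an induction on $k$ shows that the $n$-ary component of $\phi^{(k+1)}-\phi^{(k)}$ takes values in progressively deeper pieces of $F_\bullet L$. Since $H$ is concentrated in finitely many positive degrees, only finitely many arities contribute, and the sequence stabilizes to a well-defined $\phi\in\Sym^0(H,L)$. Uniqueness follows from the same estimate applied to the difference of any two solutions.

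For part (2), the key identity to establish is the morphism equation
$$\phi\circ(\delta+\mu)=(\delta+\lambda)\bullet\phi\,.$$
Starting from the defining equation $\phi=\iota-\eta\lambda\bullet\phi$, I would apply $\delta$ on the left, then use the side-condition identity $\delta\eta+\eta\delta=\id_L-\iota\pi$ to rewrite $\delta\eta(\lambda\bullet\phi)$. The term $\iota\pi(\lambda\bullet\phi)$ produces precisely $\iota\mu$ by the definition of $\mu$. The remaining contributions can be repackaged using the pre-Lie/associativity compatibilities between $\circ$ and $\bullet$ (established in \cite{ezra}) together with the Maurer--Cartan equation $[\delta,\lambda]+\lambda\circ\lambda=0$ to yield the displayed morphism equation. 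Once this is known, applying the morphism equation to $(\delta+\mu)$ itself gives
$$\phi\circ\bigl([\delta,\mu]+\mu\circ\mu\bigr)=\bigl([\delta,\lambda]+\lambda\circ\lambda\bigr)\bullet\phi=0\,.$$
Since $\phi$ starts with $\iota$ and $\pi\iota=\id_H$, post-composing with $\pi$ and an induction on the filtration extracts $[\delta,\mu]+\mu\circ\mu=0$, so $\mu$ is a curved $L_\infty[1]$-structure on $(H,\delta)$.

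For part (3), I would construct $\tilde\pi\in\Sym^0(L,H)$ by the mirror fixed-point procedure: $\tilde\pi_1=\pi-\tilde\pi_1\lambda_1\eta$ for the linear part, and higher components obtained by iterating $\eta$ against $\lambda$ in the dual direction; convergence is again automatic by filtration nilpotency, and the same computation pattern as in part~(2) shows that $\tilde\pi$ intertwines $\delta+\lambda$ with $\delta+\mu$. To prove $\tilde\pi\bullet\phi=\id_H$, I would verify that both sides satisfy the same recursive equation in $\Sym^0(H,H)$ whose unique solution (again by the filtration argument) is $\id_H$. The telescoping uses $\pi\iota=\id_H$ together with $\eta\iota=0$ and $\pi\eta=0$, which follow from $\eta^2=0$ and $\eta\delta\eta=\eta$ via the definition of $H=\ker[\delta,\eta]$.

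The main obstacle will be the algebraic verification in part~(2): the interplay of $\circ$, $\bullet$, the side conditions on $\eta$, and the curved Maurer--Cartan equation must be combined in exactly the right order, with particular attention to the fact that $\lambda_0\neq 0$ is permitted in the curved setting. This is the computational core of the homotopy transfer theorem; while the uncurved analogue is classical, the filtered-curved version requires care with signs and with the bookkeeping of the zeroth operation, and I would closely follow the sign and tree conventions of \cite{ezra,MR1950958,MR3276839}.
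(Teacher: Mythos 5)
Your proposal is correct and follows essentially the same route as the paper: the paper's proof simply invokes Getzler's recursive/tree-formula argument (and a modification of the cited approach for $\tilde\pi$), with the only added content being that positive bounded degrees plus nilpotency of $\lambda$ with respect to the filtration make all recursions terminate — exactly the convergence mechanism your fixed-point iteration exploits. Your sketch is in fact more detailed than the paper's proof, but the computational core you defer to \cite{ezra,MR1950958,MR3276839} is precisely what the paper defers as well.
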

\begin{proof}
The proof in \cite{ezra} applies to our situation. 
Our assumptions imply that the number of internal nodes (i.e.\ nodes of valence at least 2)  is bounded above by the highest filtration degree in $L$, and the valence of all  nodes is bounded by the highest degree in $L$. This bounds the number of trees itself, and hence the number of transferred operations.

For the last part, since the curved $L_\infty[1]$-algebras are assumed to be bounded positively graded and nilpotent, one may modify the  approach in \cite{MR3276839} to obtain the projection morphism $\tilde \pi$. 
\end{proof}

\begin{rmk}\label{rmk:TreeFormula} 
One can find $\phi:H\to L$ (and $\mu$) explicitly by solving (\ref{recu}) recursively.  The result is as follows.

Consider  rooted trees with a positive number $n$ of  leaves, and a non-negative number of nodes. For every such tree,
 define an element of $\Sym^{n,0}(H,L)$ by decorating each leaf (an edge without a node at the top) with $\iota$, each node with the applicable $\lambda_k$, and all edges other than leaves with $\eta$. See Figure~\eqref{fig:TreeForIncl} for an example.  (So the root is labeled with $\eta$, unless the root is a leaf, which happens only for the tree without any nodes as in Figure~\eqref{fig:TreeWithoutNodes}.) Then $\phi$ is equal to the sum over all such trees of the corresponding element of $\Sym^0(H,L)$ with proper coefficients. Since $\lambda$ is nilpotent, only finitely many trees give non-zero contributions.

\begin{figure}[h]
\begin{subfigure}{.5\textwidth}
\centering
\begin{tikzpicture}[
    sibling distance  = 1.5cm,
    level distance  = 1cm,
    grow=up
  ]
\node {}
child {
node[circle, draw]{$\lambda_3$}
child {
node[circle, draw, black]{$\lambda_2$}
child {
node {}
edge from parent[black]
node[right]{ $\iota$}
}
child {
node {}
edge from parent[black]
node[left] {$\iota$}
}
edge from parent[blue]
node[right, near start] {\; $\eta$}
}
child {
node {}
edge from parent[black]
node[left] {$\iota$}
}
child {
node[circle, draw, black] {$\lambda_1$}
child {
node {}
edge from parent[black]
node[left] {$\iota$}
}
edge from parent[blue]
node[left, near start] {$\eta$ \;}
}
edge from parent[red]
node[right] {$\eta$}
}
;
\end{tikzpicture}
\caption{A tree for inclusion morphism}
  \label{fig:TreeForIncl}
\end{subfigure}
\begin{subfigure}{.5\textwidth}
\centering
\begin{tikzpicture}[
    sibling distance  = 1.5cm,
    level distance  = 1cm,
    grow=up,
  ]
\node {}
child {
edge from parent[red]
node[right] {$\iota$}
}
;
\end{tikzpicture}
\caption{The tree without nodes}
  \label{fig:TreeWithoutNodes}
\end{subfigure}
\caption{Decorated trees in the formulas of inclusion morphism $\phi$}
\end{figure}
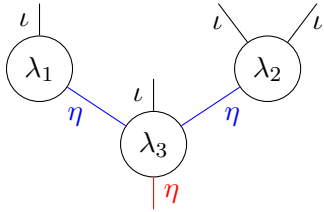
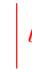

Consider also rooted trees with a non-negative number $n$ of leaves, and a positive number of nodes. For every tree of this kind, define an element of $\Sym^{n,1}(H,H)$ by decorating each leaf with $\iota$, each node with the applicable $\lambda_k$, all internal edges with $\eta$, and the root with $\pi$. See Figure~\eqref{fig:TreeForOp} for an example. Then $\mu$ is equal to the sum over all such trees of the corresponding element of $\Sym^1(H,H)$ with proper coefficients. Again, only finitely many trees contribute to the sum.

A priori, one may have a tree with $\lambda_0$ labeled at a node such as Figure~\eqref{fig:VanishingTree}. Nevertheless, it follows from our degree assumptions that $\eta(\lambda_0)=0$, and thus the terms associated with such trees vanish except the transferred curvature $\pi(\lambda_0)$. 

In the homotopy transfer theorem for non-curved $L_\infty[1]$-algebras \cite{2017arXiv170502880B,FioMan},  the cochain differential $\delta$ is assumed to be the first bracket, and $\lambda$ consists of the operations with two or more inputs. Thus, the nodes in the tree formulas \cite{FioMan} have at least two ascending edges, while the nodes in our trees may have only one ascending edge as in Figure~\eqref{fig:TreeForOp}. 
\end{rmk}

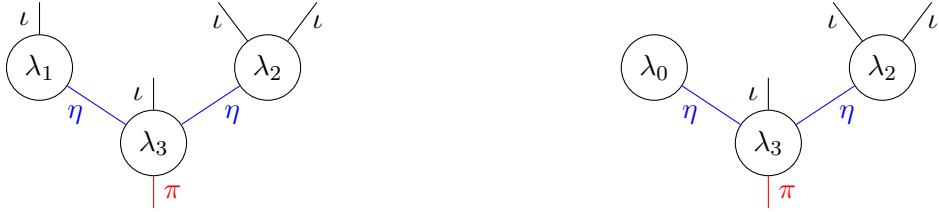
\begin{figure}[h]
\begin{subfigure}{.5\textwidth}
\centering
\begin{tikzpicture}[
    sibling distance  = 1.5cm,
    level distance  = 1cm,
    grow=up
  ]
\node {}
child {
node[circle, draw]{$\lambda_3$}
child {
node[circle, draw, black]{$\lambda_2$}
child {
node {}
edge from parent[black]
node[right]{ $\iota$}
}
child {
node {}
edge from parent[black]
node[left] {$\iota$}
}
edge from parent[blue]
node[right, near start] {\; $\eta$}
}
child {
node {}
edge from parent[black]
node[left] {$\iota$}
}
child {
node[circle, draw, black] {$\lambda_1$}
child {
node {}
edge from parent[black]
node[left] {$\iota$}
}
edge from parent[blue]
node[left, near start] {$\eta$ \;}
}
edge from parent[red]
node[right] {$\pi$}
}
;
\end{tikzpicture}
\caption{A tree for transferred operations}
  \label{fig:TreeForOp}
\end{subfigure}
\begin{subfigure}{.5\textwidth}
\centering
\begin{tikzpicture}[
    sibling distance  = 1.5cm,
    level distance  = 1cm,
    grow=up,
  ]
\node {}
child {
node[circle, draw]{$\lambda_3$}
child {
node[circle, draw, black]{$\lambda_2$}
child {
node {}
edge from parent[black]
node[right]{ $\iota$}
}
child {
node {}
edge from parent[black]
node[left] {$\iota$}
}
edge from parent[blue]
node[right, near start]{\; $\eta$}
}
child {
node {}
edge from parent[black]
node[left] {$\iota$}
}
child {
node[circle, draw, black]{$\lambda_0$}
edge from parent[blue]
node[left, near start] {$\eta$ \;}
}
edge from parent[red]
node[right] {$\pi$}
}
;
\end{tikzpicture}
\caption{An ignorable tree}
  \label{fig:VanishingTree}
\end{subfigure}
\caption{Decorated trees in the formulas of transferred operations}
\end{figure}

\begin{rmk}
Consider the case $\lambda=\lambda_1$. Then   $\eta\lambda$ and $\lambda\eta$ are nilpotent linear operators on $L$.  The transfer theorem yields
$$\phi=\phi_1=(1+\eta\lambda)^{-1}\iota\,,$$
and
$$\mu=\mu_1=\pi\lambda(1+\eta\lambda)^{-1}\iota=\pi(1+\lambda\eta)^{-1}\lambda\iota\,.$$
In this case we have more: if we define 
$$\tilde\pi=\pi(1+\lambda\eta)^{-1}\quad\text{and}\quad
\tilde\eta=\eta(1+\lambda\eta)^{-1}=(1+\eta\lambda)^{-1}\eta\,,$$
we obtain a deformation of the `contraction data' given by the pair $(\delta,\eta)$. See, for example, \cite[Appendix~A]{MR4665716}.  In fact, $\phi:(H,\delta+\mu)\to(L,\delta+\lambda)$ and $\tilde\pi:(L,\delta+\lambda)\to (H, \delta+\mu)$ are homomorphisms of complexes, $\tilde\pi\phi=\id_H$, and $\phi\tilde\pi=\id_L-[\delta+\lambda,\tilde\eta]$. This is a special case of the homological perturbation lemma \cite{MR0220273,2004math......3266C}.
\end{rmk}

\begin{rmk}\label{rmk:0th1stMapInHptTransfer}
In the general case, we always have
\begin{align*}
\phi_1 & =(1+\eta\lambda_1)^{-1}\iota\,, \\
\mu_0 & =\pi(\lambda_0)\, , \\
\mu_1 &=\pi\lambda_1(1+\eta\lambda_1)^{-1} \iota=\pi(1+\lambda_1\eta)^{-1}\lambda_1 \iota \,, \\
\tilde\pi_1 & = \pi (1+\lambda_1 \eta)^{-1} \, .
\end{align*}
\end{rmk}

\section{The category of $L_\infty$-bundles}

\subsection{$L_\infty$-bundles}
\begin{defn}
An {\it $L_\infty$-bundle} is a triple $\M=(M,L,\lambda)$, where $M$ is
a manifold (called the base), $L$ is a (finite-dimensional) graded vector bundle
$$L=L^1\oplus\ldots\oplus L^{n}$$ over $M$, and 
$\lambda=(\lambda_k)_{k\geq0}$ is a sequence of 
multi-linear operations
of degree $+1$
\begin{equation}
\label{eq:SCE}
\lambda_k:\underbrace{ L\times_M\ldots \times_ML}_k\longrightarrow L
\,,\qquad k\geq0\,.
\end{equation}
The operations $\lambda_k$ are required to be differentiable maps over $M$, and to make each fiber $(L,\lambda)|_P$,
for $P\in M$, into a curved $L_\infty[1]$-algebra.
\end{defn}
  We say that $(L,\lambda)$ is
a {\it bundle of curved $L_\infty[1]$-algebras }over $M$. 
 By our assumption on the
degrees of $L$,  we have $\lambda_k=0$, for $k\geq n$. The integer $n$
is called the {\it amplitude }of $\M$. 

An $L_\infty$-bundle is called {\it quasi-smooth} if its amplitude is one.
 That is, a quasi-smooth $L_\infty$-bundle consists of
 a triple $(M,L,\lambda)$,
 where $L = L^1$ is a vector bundle of degree $1$ over $M$,
 and $\lambda = \lambda_0$ is a global section of $L$.
Such an  $L_\infty$-bundle can be considered as the ``derived intersection"
of $\lambda_0$ with the zero section of $L$.
 If $f$ is a smooth function on $M$, the quasi-smooth $L_\infty$-bundle
 $(M, T^\vee M[-1], df)$ can be thought  as the ``derived critical locus'' of $f$.

Recall that the curved $L_\infty[1]$-axioms for the $\lambda = (\lambda_k)_{k \geq 0}$ can be summarized in the single equation
$$\lambda\circ\lambda=0\,.$$
(See Section~\ref{linfty} for the notation.) 
The axioms are enumerated by the number of arguments they take.  The first few are 
\begin{items}
\item ($n=0$) $\lambda_1(\lambda_0)=0$,
\item ($n=1$) $\lambda_2(\lambda_0,x)+\lambda_1^2(x)=0$.
\item ($n=2$) $\lambda_3(\lambda_0,x,y)+\lambda_2(\lambda_1(x),y)+ (-1)^{|x||y|}\, \lambda_2(\lambda_1(y),x)+\lambda_1(\lambda_2(x,y))=0$. 
\end{items}
If all $\lambda_k$, for $k\geq3$, vanish, then $L[-1]$ is a bundle of curved differential graded Lie algebras over $M$. Guided by this correspondence, we sometimes call the section $\lambda_0$ of $L^1$ over $M$ the {\it curvature }of $\M$, call  $\lambda_1:L\to L[1]$ the {\it twisted differential}, and $\lambda_2$ the {\it bracket}.  The $\lambda_k$, for $k\geq3$, are the {\it higher brackets}.

\begin{defn}
The set of points $Z(\lambda_0)\subset M$ where the curvature vanishes is a closed subset of $M$.  It is called the {\it classical} or
  {\it Maurer-Cartan locus }of the $L_\infty$-bundle $\M$, notation
$\pi_0(\M)$. We consider $\pi_0(\M)$ as a set without any further
structure.
\end{defn}

\begin{defn}\label{defnlin}
A {\it morphism }of $L_\infty$-bundles $(f,\phi):(M,L,\lambda)\to
(M',L',\lambda')$ consists of a differentiable map $f:M\to M'$, and 
a sequence of operations  
$$\phi_k:\underbrace{L\times_M\ldots\times_M L}_k\longrightarrow  L'\,,\qquad k\geq1\,,$$
of degree zero. The operations $\phi_k$ are required to be fiberwise multilinear differentiable maps covering $f:M\to M'$, such that for every $P\in M$, the induced
sequence of operations $\phi|_P$ defines a morphism of curved $L_\infty[1]$-algebras $(L,\lambda)|_P\to (L',\lambda')|_{f(P)}$. 
A morphism $(f,\phi):(M,L,\lambda)\to (M',L',\lambda')$ of $L_\infty$-bundles is said to be {\it linear}, if $\phi_k=0$, for all $k>1$. 
\end{defn}

Observe that, for a morphism of $L_\infty$-bundles $(f,\phi):(M,L,\lambda) \to (M',L',\lambda')$, the first map $\phi_1:L\to f^\ast L'$ is a morphism of graded vector
bundles. 
Also note that our degree assumptions imply that $\phi_k=0$, for
$k>n'$, where $n'$ is the amplitude of $(M',L',\lambda')$.

\begin{rmk}
The linear morphisms define a subcategory of the category of $L_\infty$-bundles, containing all identities, although not all isomorphisms.
\end{rmk}

Recall that the $L_\infty[1]$-morphism axioms
 can be summarized in the single equation
(see Section~\ref{linfty} for the notations) 
$$\phi\circ\lambda=\lambda'\bullet\phi\,.$$
They are enumerated by the number of
arguments they take.  The first few are
\begin{items}
\item ($n=0$) $\phi_1(\lambda_0)=\lambda_0'$,
\item ($n=1$) $\phi_2(\lambda_0,x)+\phi_1(\lambda_1(x))=\lambda'_1(\phi_1(x))$.
\end{items}
The first property implies that a morphism of $L_\infty$-bundles induces a
map on classical loci. 

It is possible to compose morphisms of $L_\infty$-bundles, and so we have the
category of $L_\infty$-bundles.

\begin{rmk}\label{finalremark}
The category of $L_\infty$-bundles has a final object.  This is the
singleton manifold with the zero graded vector bundle on it, notation $\ast$.  The
classical locus of an $L_\infty$-bundle $\M$ is equal to the set of morphisms
from $\ast$ to $\M$.
\end{rmk}

\begin{rmk}
If $(M,L,\lambda)$ is an $L_\infty$-bundle, and $f:N\to M$ a differentiable
map, then $(f^\ast L,f^\ast\lambda)$, is a bundle of curved
$L_\infty [1]$-algebras over $N$, enhancing $N$ to an $L_\infty$-bundle $(N,f^\ast L,f^\ast\lambda)$, 
together with a morphism of
 $L_\infty$-bundles $(N,f^\ast L,f^\ast\lambda)\to(M,L,\lambda)$. 
\end{rmk}

\begin{rmk}
If $(M,L,\lambda)$ is an $L_\infty$-bundle, then the set of global sections
$\Gamma(M,L)$ is a curved $L_\infty[1]$-algebra. For every point $P\in M$, evaluation at $P$ defines a linear morphism of $L_\infty[1]$-algebras
 $\Gamma(M,L)\to L|_P$. 
\end{rmk}

\begin{prop}\label{oneiso}
Let $(f,\phi):(M,L,\lambda)\to (M',L',\lambda')$ be a morphism of $L_\infty$-bundles.  If $f:M\to M'$ is a diffeomorphism of manifolds, and $\phi_1:L\to L'$ an isomorphism of vector bundles, then $(f,\phi)$ is an isomorphism of $L_\infty$-bundles. 
\end{prop}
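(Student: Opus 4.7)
The plan is to construct an explicit two-sided inverse morphism $(g,\psi):(M',L',\lambda')\to(M,L,\lambda)$. I take $g=f^{-1}$, which exists and is smooth because $f$ is a diffeomorphism, and I build $\psi=(\psi_k)_{k\geq 1}$ recursively so that $\psi\bullet\phi=\id$ under the composition $\bullet$ reviewed in Section~\ref{linfty}. Since $\phi_1:L\to L'$ is a fiberwise isomorphism of graded vector bundles covering $f$, it admits a smooth inverse as a bundle map; I take $\psi_1$ to be this inverse, viewed as a bundle map $L'\to L$ covering $g$.

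For $k\geq 2$, expanding the $k$-argument component of the equation $\psi\bullet\phi=\id$ isolates the term $\tfrac{1}{k!}\psi_k(\phi_1,\ldots,\phi_1)$ together with correction terms involving only $\psi_j$ and $\phi_{n_i}$ for $j<k$. Because $\phi_1$ is invertible fiberwise, this equation uniquely determines a graded symmetric multilinear $\psi_k$ of the correct degree. The resulting formula is polynomial in the $\phi_n$'s and in $\phi_1^{-1}$, and is therefore smooth on $M'$. The amplitude assumption on $L$ and $L'$ ensures $\psi_k=0$ for $k$ sufficiently large, so the recursion terminates after finitely many steps.

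What remains is to verify that $\psi$ so constructed is genuinely a morphism of $L_\infty$-bundles, i.e.\ that $\psi\circ\lambda'=\lambda\bullet\psi$ holds fiberwise, and that $\psi$ is a two-sided inverse of $\phi$. Both are purely algebraic statements about curved $L_\infty[1]$-algebras on the fibers: running the same recursion in the opposite direction (using that $\psi_1$ is also an isomorphism) produces a $\psi'$ with $\phi\bullet\psi'=\id$, and a standard uniqueness argument in the pre-Lie algebra $\Sym^0(L,L)$ gives $\psi'=\psi$, so $\psi$ is two-sided. Conjugating the morphism relation $\phi\circ\lambda=\lambda'\bullet\phi$ by $\psi$ from both sides then yields $\psi\circ\lambda'=\lambda\bullet\psi$, as required. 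The main point requiring care is that all these fiberwise algebraic manipulations preserve smoothness over the base, but this is immediate because every operation in the recursion is polynomial in the smooth bundle maps $\phi_k$, $\psi_k$ and in the smooth invertible $\phi_1$, $\psi_1$. Hence $(g,\psi)$ is a smooth two-sided inverse of $(f,\phi)$.
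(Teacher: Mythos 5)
Your proposal is correct and follows essentially the same route as the paper's proof: solve $\psi\bullet\phi=\id$ recursively using the invertibility of $\phi_1$, deduce that $\psi$ is a two-sided inverse from the associativity of $\bullet$, and then check that the morphism equation $\phi\circ\lambda=\lambda'\bullet\phi$ transfers to $\psi\circ\lambda'=\lambda\bullet\psi$. The only differences are cosmetic — the paper first reduces to $M=M'$, $f=\id_M$, $\phi_1=\id_L$ and gets two-sidedness from uniqueness of solutions of $(\cdot)\bullet\phi=\chi$, whereas you build a right inverse by a second recursion and identify it with $\psi$ (and note that the relevant structure for that step is the associative product $\bullet$ on $\Sym^0$, not the pre-Lie structure).
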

\begin{pf}
Without loss of generality, we assume $M=M'$, $L=L'$, and $f=\id_M$, $\phi_1=\id_L$. First, note that for every sequence of operations $\chi$ on $L$, the equation $\psi\bullet \phi=\chi$ has a unique solution for $\psi$.  In fact, the equation $\psi\bullet\phi=\chi$ defines $\psi_k$ recursively.  Then define $\psi$ such that $\psi\bullet\phi=\id$, where $\id_1=\id_L$, and $\id_k=0$, for $k>1$.  Then $(\phi\bullet\psi)\bullet\phi=\id\bullet\phi$ by associativity of $\bullet$. By the uniqueness result proved earlier, this implies $\phi\bullet\psi=\id$.  
Therefore, $\psi$ is an inverse to $\phi$.  One checks 
that $\lambda'\bullet\phi=\phi\circ\lambda$ implies $\lambda\bullet\psi=\psi\circ\lambda'$, and hence $\psi$ is an inverse for $\phi$ as a morphism of 
$L_\infty$-bundles. 
\end{pf}

\subsection{Algebra model}

For a manifold $M$, we denote the sheaf of $\rr$-valued $C^\infty$-functions by $\O_M$.

\begin{defn}
A {\it graded manifold} $\M$ of {\it amplitude} $[m, n]$
is a pair  $(M,\A)$, where $M$ is a manifold,
$$\A=\bigoplus_{i}\A^i$$
is a sheaf of $\zz$-graded commutative $\O_M$-algebras over (the underlying
topological space) $M$, such that there exists a  $\zz$-graded
vector space  
$$V=V^{m}\oplus V^{m+1} \oplus \cdots V^{n-1}\oplus V^{n}$$  
and a covering of $M$ by open submanifolds $U\subset M$, and
for every $U$ in the covering family, we have 
$$\A|_U\cong C^\infty (U)\otimes \Sym V^\vee\,,$$ 
as sheaf of graded $\O_M$-algebras.
Here $m$ and $n$ may not be non-negative integers, and
we only assume that $m\leq n$.
\end{defn}

\begin{rmk}
In our definition, a graded manifold $\M$ has two types of local coordinates of degree zero --- one is $C^\infty$ coordinates from the base manifold $M$, and the other one is polynomial coordinates from $V^0$. By amplitude, we mean the degree amplitude of $V$ (ignoring the $C^\infty$ part).
\end{rmk}

We say that a graded manifold $\M$ is finite-dimensional
if $\dim M$ and $\dim V$ are both finite. 
We refer the reader to~\cite[Chapter~2]{MR2709144} and~\cite{MR2275685,MR2819233}
for a short introduction to graded manifolds and relevant references.
By $\Gamma (M, \A)$, we denote the space of global sections.

\begin{defn} 
A {\it differential graded manifold} (or {\it dg manifold}) is a triple $(M,\A,Q)$, where $(M,\A)$ is
a graded manifold, and $Q:\A\to \A$
is a  degree 1 derivation of $\A$ 
as sheaf of $\rr$-algebras, such that $[Q,Q]=0$.
\end{defn}

Since we  are only interested  in the $C^\infty$-context,  in
the above definition, the existence of such  $Q:\A\to \A$ is 
equivalent to the existence of  degree 1 derivation on the global sections
$$Q:  \Gamma({M, \A})\to \Gamma({M, \A})$$
such that  $[Q,Q]=0$. Thus $Q$ can be considered as
a vector field on $\M$, notation $Q\in \XX (\M)$,
called a  {\it homological vector field} on the  graded manifold $\M$
 \cite{MR1432574,MR2819233}.

\begin{defn}
A {\it morphism }of dg manifolds $(M,\A,Q)\to (N,\B,Q')$ is a pair
$(f,\Phi)$, where $f:M\to N$ is a differentiable map of manifolds, and
$\Phi:\B\to f_\ast \A$ (or equivalently $\Phi:f^\ast \B\to \A$) is a
morphism of sheaves of graded  algebras, such that $Q\,\Phi=\Phi\, f^\ast Q'$.
\end{defn}

This defines the category of dg manifolds.

Let $\M=(M,L,\lambda)$ be an $L_\infty$-bundle. 
The sheaf of graded algebras $\A=\Sym_{\O_M}L^\vee$ is locally free
 (on generators in negative bounded degree), and thus
$(M, \A)$ is a graded manifold of  amplitude  $[1, n]$, called
{\it  graded manifolds of positive amplitude} by abuse of notation.
  The sum of the duals of the $\lambda_k$ defines a homomorphism of
 $\O_M$-modules $L^\vee\to \Sym_{\O_M}L^\vee$, which extends, in a unique way, to a derivation $Q_\lambda:\A\to \A$ of degree $+1$. 
 The condition
 $\lambda\circ\lambda=0$ is equivalent to the condition $[Q_\lambda, Q_\lambda]=0$. 

Given a morphism of $L_\infty$-bundles $(f,\phi):(M,L,\lambda)\to (N,E,\mu)$, the sum of the duals of the $\phi_k$  gives rise to a homomorphism of $\O_M$-modules $f^\ast E^\vee\to \Sym_{\O_M} L^\vee$, which extends uniquely to a morphism of sheaves of $\O_M$-algebras $\Phi:f^\ast\Sym_{\O_N}E^\vee\to \Sym_{\O_M}L^\vee$.  Conversely, every morphism of $\O_M$-algebras $f^\ast\Sym_{\O_N}E^\vee\to \Sym_{\O_M}L^\vee$ arises in this way from a unique family of operations $(\phi_k)$. The condition $\phi\circ\lambda=\mu\bullet\phi$ 
is equivalent to $Q_\lambda\Phi=\Phi f^\ast Q_\mu$. 

These considerations show that we have a functor
\begin{align}\label{eq3}
(\text{$L_\infty$-bundles})&\longrightarrow (\text{dg manifolds of
positive amplitude})\\
(M,L,\lambda)&\longmapsto (M,\Sym_{\O_M} L^\vee,Q_\lambda)\,.\nonumber
\end{align}
which is fully faithful.

\begin{prop}
\label{pro:Batchelor}
The functor (\ref{eq3}) is an equivalence of categories. 
\end{prop}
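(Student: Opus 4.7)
The preceding discussion already establishes that the functor \eqref{eq3} is fully faithful: the correspondence between the operations $(\lambda_k)$ and derivations $Q_\lambda$, and between the $(\phi_k)$ and algebra homomorphisms $\Phi$, is bijective by the universal property of $\Sym_{\O_M} L^\vee$ as a free graded commutative $\O_M$-algebra, and the displayed translation of the structure equations shows that dg-objects and dg-morphisms correspond on the nose. So the plan is to prove essential surjectivity: every dg manifold $(M,\A,Q)$ of positive amplitude is isomorphic to one of the form $(M,\Sym_{\O_M}L^\vee,Q_\lambda)$.

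The main step will be a Batchelor-type result in the $C^\infty$ setting: any graded manifold $(M,\A)$ of positive amplitude is (non-canonically) isomorphic, as a sheaf of graded $\O_M$-algebras, to $(M,\Sym_{\O_M} L^\vee)$ for some graded vector bundle $L=L^1\oplus\cdots\oplus L^n$ on $M$. The candidate bundle is forced on us: since the generators are in strictly positive degree, the ideal $\A^{>0}=\bigoplus_{i>0}\A^i$ satisfies $\A/\A^{>0}\cong\O_M$, and $L^\vee:=\A^{>0}/(\A^{>0})^2$ is a locally free graded $\O_M$-module of finite rank whose dual $L$ provides the underlying graded vector bundle. To promote this to a global algebra isomorphism $\Sym_{\O_M}L^\vee\to\A$, one chooses an $\O_M$-linear splitting $s\colon L^\vee\to \A^{>0}$ of the projection and extends by the universal property. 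Local existence of such a splitting is automatic from the local triviality of $\A$; globalization uses partitions of unity, which is why we are in the $C^\infty$-world. This is the essential content and the main (though standard) obstacle.

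Granted the isomorphism $\A\cong\Sym_{\O_M}L^\vee$, we transport the homological vector field $Q$ to a derivation of $\Sym_{\O_M}L^\vee$ of degree $+1$ squaring to zero. Since $Q$ is a derivation, it is determined by its restriction to the generators $L^\vee\subset\Sym_{\O_M}L^\vee$, and this restriction decomposes, via the grading $\Sym_{\O_M}L^\vee=\bigoplus_{k\geq 0}\Sym^k_{\O_M}L^\vee$, into components
\[
Q|_{L^\vee}=\sum_{k\geq 0}\lambda_k^\vee,\qquad \lambda_k^\vee\colon L^\vee\longrightarrow \Sym^k_{\O_M}L^\vee.
\]
Dualizing fiberwise, each $\lambda_k^\vee$ corresponds to a graded symmetric fiberwise multilinear map $\lambda_k\colon L\times_M\cdots\times_M L\to L$ of degree $+1$, as in \eqref{eq:SCE}. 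These sums are finite fiberwise because $L$ is concentrated in finitely many positive degrees. Smoothness of $\lambda_k$ is inherited from the smoothness of $Q$ acting on $\O_M$-sections of $L^\vee$.

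Finally, the condition $[Q,Q]=0$ translates, via the same fully-faithful dictionary recorded in the excerpt, into $\lambda\circ\lambda=0$ on each fiber, so $(L,\lambda)$ is a bundle of curved $L_\infty[1]$-algebras over $M$ and $(M,L,\lambda)$ is an $L_\infty$-bundle whose image under \eqref{eq3} is isomorphic to $(M,\A,Q)$. Combined with full faithfulness, this gives the equivalence. The only genuinely nontrivial ingredient is the $C^\infty$ Batchelor theorem, which is where partitions of unity enter.
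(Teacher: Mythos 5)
Your reduction to essential surjectivity is the right plan, but there is a genuine gap in the second half, at exactly the point where the hypothesis of positive amplitude must be used. The claim ``since $Q$ is a derivation, it is determined by its restriction to the generators $L^\vee$'' and the subsequent ``dualizing fiberwise'' both presuppose that $Q$ is $\O_M$-linear, i.e.\ that the homological vector field is tangent to the fibres of $L$. A derivation of $\A$ as a sheaf of $\rr$-algebras is determined by its values on $\O_M$ \emph{together with} $L^\vee$, and since $Q(f\xi)=Q(f)\xi+fQ(\xi)$, a nonzero $Q(\O_M)$ would make the components of $Q|_{L^\vee}$ differential operators rather than bundle maps, so no fiberwise operations $\lambda_k$ as in \eqref{eq:SCE} would exist (this is what happens for dg manifolds of non-positive amplitude, e.g.\ $T[1]M$ with the de Rham differential). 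The missing observation, which the paper states explicitly, is that $\A\cong\Sym_{\O_M}L^\vee$ is concentrated in degrees $\leq 0$ because the generators $L^\vee$ sit in degrees $-n,\dots,-1$, so the degree $+1$ derivation satisfies $Q(\O_M)\subset\A^1=0$; only then is $Q|_{L^\vee}$ $\O_M$-linear and the fiberwise dualization legitimate. Your proposal also has the degree convention backwards: the algebra generators are in \emph{negative} degree, so the relevant ideal is the one generated by the nonzero-degree (negative) part of $\A$, not $\A^{>0}$ (which is zero here); with the convention as you wrote it, the degree argument just described would in fact fail, so this is more than a cosmetic slip.

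Apart from this, your Batchelor step is a correct alternative route to the paper's. The paper builds the generating bundle degree by degree: it splits $\A^{-n}\to\A^{-n}/\B^{-n}$, where $\B$ is the subalgebra generated in degrees $\geq -n+1$, and inducts; you instead split, in one step, the projection from the augmentation ideal onto its quotient by its square and extend to an algebra map $\Sym_{\O_M}L^\vee\to\A$. Both versions use partitions of unity to split a surjection onto a locally free sheaf. You should, however, justify why your extended algebra map is an isomorphism: filter both sides by powers of the (nilpotent, since the amplitude is finite) ideal and observe that the map is the identity on the associated graded; the paper's induction on the degree plays exactly this role. With that remark, and with the verticality of $Q$ supplied as above, your argument matches the paper's conclusion.
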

\begin{pf}
It remains to show essential surjectivity. We need to prove that if $\A$ is a
  sheaf of algebras, locally free on generators of negative degree  $\geq -n$,
 then there exist a graded vector bundle $E=E^{-n}\oplus\ldots\oplus E^{-1}$,
 and an embedding $E\hookrightarrow \A$,  such that $\Sym_{\O_M} E\iso\A$.
  Let $\B\subset\A$ be the sheaf of subalgebras generated by all local sections of degree $\geq -n+1$. Then the $\O_M$-module
 $E^{-n}=\A^{-n}/\B^{-n}$ is locally free, and the projection
 $\A^{-n}\to E^{-n}$ admits a section $E^{-n}\hookrightarrow\A^{-n}$. 
 By induction, we have $E^{-n+1}\oplus\ldots\oplus E^{-1}\hookrightarrow \A$, 
inducing an isomorphism $\Sym_{\O_M}(E^{-n+1}\oplus\ldots\oplus E^{-1})\iso \B$.  It follows that $\Sym_{\O_M}(E^{-n}\oplus\ldots\oplus E^{-1})\iso \A$.
Let  $L^{i}=(E^{-i})^\vee$, $i=1, \ldots, n$,
 and $L=L^1\oplus\ldots\oplus L^{n}$. Thus $\A\cong \Sym_{\O_M} L^\vee$.

Moreover,  as the morphism $\O_M\to\A$ has image in degree $0$, for degree
reason, we have $Q(\O_M)=0$, so that $(\O_M, 0)\to (\A,Q)$ is a morphism of 
sheaves of differential graded algebras. It thus follows that the
homological vector field $Q$ is indeed tangent to the fibers of
the graded vector bundle $L$.
Therefore, $Q$ induces a unique family of fiberwise
 operations $(\lambda_k)_{k \geq 0}$ of degree 1 as in  \eqref{eq:SCE}.
 The condition  $[Q, Q]=0$ implies that
 $\lambda\circ\lambda=0$. Hence $(L,\lambda)$ is
a  bundle of curved $L_\infty[1]$-algebras over $M$, and
$(M, L, \lambda)$ is an $L_\infty$-bundle.
\end{pf}

In the supermanifold (i.e.\ $\zz_2$-graded with $Q$ being zero) case,
Proposition~\ref{pro:Batchelor} is known as Batchelor's
theorem --- see~\cite{MR536951,MR2275685,QFT_2-vol_book,2021arXiv210813496K}.

\begin{rmk}
\label{rmk:dg}
Thus, $L_\infty$-bundles, according to our definition, are differential
 graded manifolds endowed with  a generating graded vector bundle of
 positive degrees. 
Some constructions are easier in the algebra model, but many use the graded vector bundle $L$. Of course, a major consideration is that $L$, being finite-rank, is more geometrical than $\A$.
\end{rmk}

\begin{rmk} 	
Dg manifolds of amplitude $[-k, -1]$ with $k \geq 1$ can be thought of as
 \emph{Lie $k$-algebroids} \cite{MR2521116,  MR2441255,MR4007376, MR2223155, MR3090103, MR2768006, MR1958835} 
and \cite[Letters~7 and~8]{2017arXiv170700265S}. 
They can be considered as the infinitesimal counterparts of higher groupoids.

Hence a general dg manifold of amplitude $[m, n]$  
can encode both stacky and derived singularities in
differential geometry.
\end{rmk}

\subsection{\'{E}tale morphisms and weak equivalences}\label{sec:Etale}

Let $\M=(M,L,\lambda)$ be an $L_\infty$-bundle, with curvature $\lambda_0$.  
Let $P\in Z(\lambda_0)$ be a classical point of $\M$. 
Since $\lambda_0(P)=0\in L^1|_P$, there is a natural decomposition
$T L^1|_{\lambda_0(P)} \cong TM|_P \oplus L^1|_P$. By  $D_P \lambda_0$, we denote
the composition
\begin{equation}
\label{eq:Lyon}
D_P \lambda_0:  T M |_P \xxto{T\lambda_0 |_P}T L^1|_{\lambda_0 (P)} \cong T M|_P \oplus L^1|_P \xxto{\pr}
 {L^1}|_P
\end{equation}
where $T\lambda_0 |_P$ denotes the tangent map of $\lambda_0: M\to L^1$ at $P\in M$.
Thus $D_P\lambda_0 :TM|_P\to L^1|_P$ is a linear map, called  the {\em  derivative of the curvature $\lambda_0$}.

\begin{defn}
\label{def:tangentcomplex}
Let $\M=(M,L,\lambda)$ be an $L_\infty$-bundle, with curvature $\lambda_0$ and twisted differential $d=\lambda_1$. Let $P\in
Z(\lambda_0)$ be a classical point of $\M$. 
 We add the derivative of the curvature  $D_P\lambda_0$ to $d|_P$ in the
graded vector space $TM|_P\oplus L|_P$ to define the {\it tangent
  complex }of $\M$ at $P$:
\begin{equation}\label{eq:tangent}
T\M|_P\quad :=\quad\xymatrix{
TM|_P\rto^{D_P\lambda_0}& L^1|_P\rto^{d|_P}& L^2|_P\rto^{d|_P} & \ldots}
\end{equation}
(Note that $TM|_P$ is in degree $0$.)
\end{defn}

  The fact that $d\lambda_0=0$, and that $P$ is a Maurer-Cartan point,   implies that we have a complex.  (Note that we do not define $T\M$ itself here.)

\begin{rmk}
The tangent complex \eqref{eq:tangent} admits a natural geometric interpretation. In classical differential geometry,
if $Q$ is a vector field on a  manifold $\M$ and $P$ is
a point in $\M$ at which $Q$ vanishes, the Lie derivative $\LL_Q$ induces
a well-defined linear  map
$$\LL_Q: T\M |_P \to T\M |_P$$
called the linearization of $Q$ at $P$ \cite[p.72]{MR515141}. 

For an $L_\infty$-bundle $\M=(M,L,\lambda)$, let $Q_\lambda$ be the
 homological vector field as in (\ref{eq3}). The  classical
points  can be thought as those points  where  $Q_\lambda$
vanishes. Note that, if  $P$ is a classical point,
$TL|_{0_P} \cong T M|_P \oplus L|_P$, where $0_P\in L|_P$
is the zero vector. By a formal calculation,
one can easily check that the linearization of $Q_\lambda$ at $P$
gives rise to  exactly the tangent complex \eqref{eq:tangent} at $P$.
\end{rmk}

Let $\M=(M,L,\lambda)$ be an $L_\infty$-bundle. 
The \textit{virtual dimension} $\ddim(\M)$ of $\M$ is defined to be 
$$
\ddim(\M) = \dim(M) +\sum_{i=1}^n (-1)^i \rk(L^i). 
$$
Since $L$ is finite-dimensional, the Euler characteristic of the tangent complex $T\M|_P$ of $\M$ at each classical point $P$ is equal to the virtual dimension of $\M$.

Given a morphism of $L_\infty$-bundles $(f,\phi):\M\to\M'$,  and a classical point $P$ of $\M$, we get an induced morphism of tangent complexes
\begin{equation}\label{eq:TangentMap}
Tf|_P:T\M|_P\longrightarrow T\M'|_{f(P)}\,.
\end{equation}
Only the linear part $\phi_1$ of $\phi$ is used in $Tf|_P$. Below, for simplicity, we will denote a morphism $(f,\phi)$ by $f$.

\begin{defn}
Let  $f:\M\to\M'$ be a morphism of  $L_\infty$-bundles  and $P$ a classical
point of $\M$. We call $f$ {\it \'etale at $P$}, if the induced
morphism \eqref{eq:TangentMap} of tangent complexes at $P$ is a quasi-isomorphism (of complexes of vector spaces). 
The morphism $f$ is {\it \'etale}, if it is \'etale  at every
classical point of $\M$. 
\end{defn}

It is clear that every isomorphism of $L_\infty$-bundles is \'etale.

\begin{rmk}
For classical smooth  manifolds, an \'etale morphism is
exactly \eetale. On  the other hand, if both  the base manifolds
of $\M$  and $\M'$ are  just a point $\{*\}$, a morphism $f:\M\to\M'$
is \'etale means that $f$ is a quasi-isomorphism of
the corresponding $L_\infty[1]$-algebras if the 
curvature of $\M$ vanishes. Otherwise, it is always \'etale  since the set of classical points is empty in this case. 
\end{rmk}

\begin{defn}
A morphism $f:\M\to\M'$ of $L_\infty$-bundles is called a {\it weak
  equivalence} if
\begin{items}
\item $f$ induces a bijection on classical loci,
\item $f$ is \'etale.
\end{items}
\end{defn}

It is simple to check that weak equivalences satisfy two out of three, i.e. Axiom~\ref{axiom:2out3} in Definition~\ref{def:CFO} is indeed satisfied. Hence the category of $L_\infty$-bundles is a category with weak equivalences.

\begin{rmk}
If $\M \to \M'$ is a weak equivalence, then $\M$ and $\M'$ have the same virtual dimension. 
\end{rmk}

\subsection{Fibrations}

\begin{defn}\label{defnfib}
We call the morphism $(f,\phi):(M,L,\lambda)\to (M',L',\lambda')$ of
$L_\infty$-bundles a {\it fibration}, if 
\begin{items}
\item $f:M\to M'$ is a submersion,
\item $\phi_1:L\to f^\ast L'$ is a degree-wise surjective morphism of
  graded vector bundles over $M$.
\end{items}
\end{defn}

\begin{rmk}
If we consider $\phi_1$ as a bundle map of graded vector bundles
$$
\xymatrix{
L \ar[d] \ar[r]^{\phi_1} & L' \ar[d] \\
M \ar[r]_{f} & M',
}
$$
 the combination of  conditions (i)-(ii)
in Definition \ref{defnfib}
is  simply equivalent to that $\phi_1:L \to L'$ is a submersion
as a differentiable map.
\end{rmk}

For every $L_\infty$-bundle $\M$, the unique morphism $\M\to\ast$ is a (linear)
fibration. That is, Axiom~\ref{axiom:FibrantObj} in Definition~\ref{def:CFO} is indeed satisfied. It is also straightforward to show that Axiom~\ref{axiom:Fibration} in Definition~\ref{def:CFO} is true for the category of $L_\infty$-bundles.

If a fibrations is a linear morphism (Definition~\ref{defnlin}), we call it a {\em linear fibration}.

\begin{lem}\label{lem:IsoToLinearMor}
Every fibration is equal to 
the composition of a linear fibration with an isomorphism. 
\end{lem}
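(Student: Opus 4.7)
\begin{pf}[Proof plan]
The plan is to absorb all the higher-order components $\phi_k$, $k\geq2$, into an isomorphism of the source, leaving only the linear part $\phi_1$ as a strict (linear) fibration onto the target. Write $\bar\phi_k:L^{\times_M k}\to f^\ast L'$ for the over-$M$ version of each $\phi_k$.

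First, since $\phi_1:L\to f^\ast L'$ is a degreewise surjective morphism of graded vector bundles over $M$, it admits a smooth, degreewise right inverse $s:f^\ast L'\to L$ with $\phi_1\circ s=\id_{f^\ast L'}$; this uses the standing convention that fiberwise surjective maps of vector bundles admit sections (applied in each degree separately). Define a new sequence of operations $\chi=(\chi_k)_{k\geq1}$ over $M$ by setting $\chi_1=\id_L$ and $\chi_k=s\circ\bar\phi_k$ for $k\geq2$. Each $\chi_k$ is a smooth, fiberwise multilinear map $L^{\times_M k}\to L$ of degree $0$ covering $\id_M$.

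Next I would use $\chi$ to transport the $L_\infty[1]$-structure from $L$ to itself. Because $\chi_1=\id_L$ is invertible, the argument in the proof of Proposition~\ref{oneiso} produces an inverse $\chi^{-1}\in\Sym^0_M(L,L)$ to $\chi$ with respect to $\bullet$. Set
$$\tilde\lambda=(\chi\circ\lambda)\bullet\chi^{-1}\in\Sym^1_M(L,L),$$
so that $\tilde\lambda\bullet\chi=\chi\circ\lambda$. By the same transport-of-structure argument used in Proposition~\ref{oneiso}, $\tilde\lambda$ satisfies the Maurer-Cartan equation $\tilde\lambda\circ\tilde\lambda=0$ fiberwise, and therefore $(M,L,\tilde\lambda)$ is an $L_\infty$-bundle. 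By construction $(\id_M,\chi):(M,L,\lambda)\to(M,L,\tilde\lambda)$ is a morphism of $L_\infty$-bundles, and it is an isomorphism by Proposition~\ref{oneiso}.

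Finally I would check that the linear morphism $(f,\phi_1)_{\mathrm{lin}}:(M,L,\tilde\lambda)\to(M',L',\lambda')$ with only $\phi_1$ as its first-order component is a linear fibration, and that the composition recovers $(f,\phi)$. The key computation is that $\phi_1$ has trivial higher components, so only the $k=1$ summand in the $\bullet$-formula survives and one obtains $(\phi_1\bullet\chi)_n=\phi_1\circ\chi_n=\bar\phi_n$ for every $n\geq1$, using $\phi_1\circ s=\id$ for $n\geq2$ and $\chi_1=\id_L$ for $n=1$. Hence $\phi_1\bullet\chi=\bar\phi$ in $\Sym^0_M(L,f^\ast L')$, so $\phi_1=\bar\phi\bullet\chi^{-1}$ is a composition of $L_\infty$-morphisms $(L,\tilde\lambda)\to(L,\lambda)\to(f^\ast L',f^\ast\lambda')$ and is therefore itself a morphism of curved $L_\infty[1]$-algebras. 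Combined with the facts that $f$ is a submersion and $\phi_1$ is degreewise surjective, this shows $(f,\phi_1)_{\mathrm{lin}}$ is a linear fibration, and the composition
$$(M,L,\lambda)\xrightarrow{(\id_M,\chi)}(M,L,\tilde\lambda)\xrightarrow{(f,\phi_1)_{\mathrm{lin}}}(M',L',\lambda')$$
equals $(f,\phi)$ as required.

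The only non-routine step is the transport of structure in the third paragraph; it is handled exactly by the uniqueness-of-inverse argument already written out in the proof of Proposition~\ref{oneiso}, so no new technology is needed. Everything else is a direct unwinding of the definitions of $\bullet$ and $\circ$ together with the choice of the section $s$.
\end{pf}
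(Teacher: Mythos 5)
Your proposal is correct and follows essentially the same route as the paper's proof: you absorb the higher components $\phi_k$, $k\geq 2$, into an automorphism of the source with linear part $\id_L$ built from a section of $\phi_1$ (the paper's $\phi_1'=\id_L$, $\phi_k'=\iota\,\phi_k$ after splitting $L=f^\ast L'\oplus F$ is exactly your $\chi$), transport $\lambda$ along it, and exhibit the fibration as a linear fibration composed with this isomorphism. The only cosmetic differences are that you keep an explicit section $s$ instead of choosing a splitting, and you verify that $\phi_1$ is a morphism out of $(M,L,\tilde\lambda)$ by writing $\phi_1=\bar\phi\bullet\chi^{-1}$ and using closure under composition, where the paper instead checks $\mu\bullet\pi=\pi\circ\lambda'$ directly by cancelling the invertible $\phi'$.
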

\begin{pf}
Let $(f,\phi):(M,L,\lambda)\to (N,E,\mu)$ be a fibration of $L_\infty$-bundles. By splitting the surjection $\phi_1:L\to f^\ast E$ we may assume that $L=f^\ast E\oplus F$, and that $\phi_1:L\to f^\ast E$ is the projection, which we shall denote by $\pi:L\to f^\ast E$.  We also denote the inclusion by $\iota:f^\ast E\to L$. 

We define operations $\phi_k':\Sym^k L\to L$ of degree zero by $\phi_1'=\id_L$, and $\phi_k'=\iota\,\phi_k$, for $k>1$. 

There exists a unique sequence of operations $\lambda':\Sym^k L\to L$ of degree 1, such that $\phi'\circ\lambda=\lambda'\bullet\phi'$. (Simply solve this equation recursively for $\lambda_k'$.) Thus
$ (M,L,\lambda')$ is an $L_\infty$-bundle, and
 $(\id_M,\phi'):(M,L,\lambda)\to (M,L,\lambda')$ is an isomorphism of $L_\infty$-bundles (see Proposition~\ref{oneiso}).

Then we have $\pi\bullet\phi'=\phi$ and $\mu\bullet\pi=\pi\circ \lambda'$. The latter equation can be checked after applying $\bullet \phi'$ on the right, as $\phi'$ is an isomorphism. Note that we   have $\phi\circ\lambda=\mu\bullet\phi$, by assumption, and hence $(\mu\bullet \pi)\bullet\phi'=\mu\bullet(\pi\bullet\phi')=\mu\bullet\phi=\phi\circ\lambda= (\pi\bullet\phi')\circ\lambda= \pi(\phi'\circ\lambda)=\pi(\lambda'\bullet\phi')=(\pi\circ\lambda')\bullet\phi'$.

We have proved that $(f,\pi):(M,L,\lambda')\to (N,E,\mu)$ is a linear morphism of $L_\infty$-bundles, and that $(f,\pi)\circ(\id_M,\phi')=(f,\phi)$. 
\end{pf}

\begin{rmk}
Suppose $\pi:L\to E$ is a linear fibration of curved $L_\infty[1]$-algebras
 over a manifold $M$.  Then $K=\ker\pi$ is a curved $L_\infty[1]$-ideal in $L$. 
This means that, for all $n$, if for some $i=1,\ldots,n$ we have $x_i\in K$,
 then $\lambda_n(x_1,\ldots,x_n)\in K$. 
(It does not imply that $\lambda_0\in K$.)
 Here $\lambda$ is the curved $L_\infty[1]$-structure on $L$. 
\end{rmk}

\begin{prop}\label{pullex} 
\begin{items}
\item Pullbacks of  fibrations exist in the category of
$L_\infty$-bundles, and  are still fibrations.
\item Pullbacks of $L_\infty$-bundle fibrations induce pullbacks of classical loci. 
\end{items}
\end{prop}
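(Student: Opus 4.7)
The plan is to reduce to the case of linear fibrations via Lemma~\ref{lem:IsoToLinearMor}, and then to construct the pullback by combining a pullback of manifolds with a fiber product of graded vector bundles, transporting the $L_\infty[1]$-structure through the algebra model of Proposition~\ref{pro:Batchelor}.

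By Lemma~\ref{lem:IsoToLinearMor}, every fibration factors as an isomorphism followed by a linear fibration; pullbacks along isomorphisms obviously exist and are again isomorphisms, and compositions of pullback squares are pullback squares, so it suffices to treat the linear case. Suppose then that $(g,\psi):\N=(N,E,\mu)\to\Z=(Z,F,\nu)$ is a linear fibration and $(f,\phi):\M=(M,L,\lambda)\to\Z$ is an arbitrary morphism of $L_\infty$-bundles. I would first set $P=M\times_Z N$, which exists as a smooth manifold because $g$ is a submersion, and write $p_M:P\to M$, $p_N:P\to N$, and $p=f\comp p_M=g\comp p_N$. Over $P$, I would form the graded vector bundle
$$\tilde L=p_M^\ast L\times_{p^\ast F} p_N^\ast E$$
with the two maps into $p^\ast F$ being the pullbacks of $\phi_1$ and $\psi_1$. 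Degreewise surjectivity of $p_N^\ast\psi_1$ (inherited from $\psi_1$) ensures that $\tilde L$ is a graded vector bundle, with $\rk\tilde L^i=\rk L^i+\rk E^i-\rk F^i$.

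The hard part will be equipping $\tilde L$ with a compatible curved $L_\infty[1]$-structure, for which I would pass to the equivalent algebra model of Proposition~\ref{pro:Batchelor}. I would consider the sheaf of graded $\O_P$-algebras
$$\tilde\A=p_M^{-1}(\Sym_{\O_M} L^\vee)\otimes_{p^{-1}(\Sym_{\O_Z} F^\vee)} p_N^{-1}(\Sym_{\O_N} E^\vee)\,.$$
The central technical point is to show that $\tilde\A$ is locally free on negatively graded generators dual to $\tilde L$: locally splitting the surjection $\psi_1$ as $E\cong g^\ast F\oplus K$ with $K=\ker\psi_1$ identifies $\tilde\A$ locally with $\Sym_{\O_P}(p_M^\ast L^\vee\oplus p_N^\ast K^\vee)$, whose generators are precisely dual to $\tilde L$. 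The homological derivations $Q_\lambda$ and $Q_\mu$ agree after restriction to $p^{-1}(\Sym F^\vee)$ — this is exactly the content of $\phi\comp\lambda=\nu\bullet\phi$ and $\psi\comp\mu=\nu\bullet\psi$ — so they glue to a square-zero derivation $\tilde Q$ on $\tilde\A$. Applying Proposition~\ref{pro:Batchelor} in reverse then yields the $L_\infty$-bundle $\tilde\M=(P,\tilde L,\tilde\lambda)$ together with canonical projections to $\M$ and $\N$.

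The universal property then follows from the universal properties of the fiber product of manifolds and of the tensor product of graded algebras. The projection $\tilde\M\to\M$ is a fibration because $p_M$ is a submersion (base change of the submersion $g$) and the linear part $\tilde L\to p_M^\ast L$, being the first projection of the fiber product, is degreewise surjective. Finally, for (ii), the curvature $\tilde\lambda_0\in\Gamma(P,\tilde L^1)$ is the pair $(p_M^\ast\lambda_0,p_N^\ast\mu_0)$, which lies in the fiber product because $p_M^\ast\phi_1(\lambda_0)=p^\ast\nu_0=p_N^\ast\psi_1(\mu_0)$; hence a point $(x,y)\in P$ is classical for $\tilde\M$ if and only if $\lambda_0(x)=0$ and $\mu_0(y)=0$, which gives $\pi_0(\tilde\M)=\pi_0(\M)\times_{\pi_0(\Z)}\pi_0(\N)$, as required.
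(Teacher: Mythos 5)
Your proposal is correct and follows essentially the same route as the paper's proof: reduce to the linear case via Lemma~\ref{lem:IsoToLinearMor}, take the fiber product of base manifolds and of graded vector bundles, realize the structure sheaf as the tensor product of the pulled-back dg algebra sheaves (whose coproduct property yields the differential and the universal property), and check local freeness by splitting off $\ker$ of the linear part; your direct identification of the curvature for part (ii) replaces the paper's appeal to the final object and the universal property, but is equally valid. One cosmetic caveat: the sheaves in your formula for $\tilde\A$ should be the $\O_P$-algebra pullbacks $p_M^\ast(\Sym_{\O_M}L^\vee)$, etc., rather than the inverse images $p_M^{-1}(\cdot)$ — as your local identification with $\Sym_{\O_P}(p_M^\ast L^\vee\oplus p_N^\ast K^\vee)$ already presumes — since otherwise the degree-zero part would not be $\O_P$.
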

\begin{pf}
Let $(p,\pi):(M,L,\lambda)\to (N,E,\mu)$ be a fibration,
 and $(f,\phi):(M',L',\lambda') \to (N,E,\mu)$ an arbitrary morphism of $L_\infty$-bundles. According to Lemma~\ref{lem:IsoToLinearMor}, we may assume that $\pi= \pi_1$ is linear without loss of generality. We denote the corresponding sheaves of algebras by 
$\A=\Sym_{\O_M} L^\vee$, $\A'=\Sym_{\O_{M'}} {L'}^\vee$, and $\B=\Sym_{\O_N} E^\vee$. 

We define the manifold $N'$ to be the fibered product $N'= M\times_N M'$, and endow it with the sheaf of differential graded algebras 
$$\B'=\A|_{N'}\otimes_{\B|_{N'}}\A'|_{N'}\,.$$
Here $\A|_{N'}$, $\A'|_{N'}$ and $\B|_{N'}$ denote the pullback sheaves of $\A$, $\A'$ and $\B$ over $N'$, respectively. And the tensor product is regarded as a tensor product of sheaves of algebras over $N'$.  
To prove $(N',\B')$ is an $L_\infty$-bundle, let $E'=L \times_E L'$ be the fibered product of the smooth maps $\pi_1:L \to E$ and $\phi_1:L' \to E$, and
consider the  projection map $E'\to N'$. It is
straightforward to see that the latter is naturally  a graded vector bundle, and by the linearity of $\pi$,  
$$
\B' \cong \big((\Sym_{\O_{M'}}K^\vee)|_{N'} \otimes_{\O_{N'}}  \B|_{N'} \big)  \otimes_{\B|_{N'}}\A'|_{N'} \cong \Sym_{\O_{N'}}{E'}^\vee,
$$  
where $K=\ker(\pi_1)$ is a subbundle of $L$ such that $L \cong  K \oplus (p^\ast E) $. It is clear that $(N', E')\to (M', L')$
is a fibration.  
It follows from the fact that tensor product is a coproduct in the category of sheaves of differential graded algebras, that $(N',\B')$ is the fibered product of $(M,\A)$ and $(M',\A')$ over $(N,\B)$ in the category of differential graded manifolds. 

For the second claim, about classical loci, one can prove it by Remark~\ref{finalremark} and the universal property of pullbacks.
\end{pf}

Thus, Axiom~\ref{axiom:PullbackFib} in Definition~\ref{def:CFO} is established.

\begin{rmk}\label{rmk:DerDim&FibProd}
Let  $\M \to \N$ be a fibration, and $\M'\to \N$ an arbitrary morphism of $L_\infty$-bundles. If the underlying graded vector bundles of $\M$, $\M'$ and $\N$ are $(M,L)$, $(M',L')$ and $(N,E)$, respectively, then the underlying graded vector bundle of $\M \times_{\N} \M'$ is $(M \times_N M', L\times_E L')$. Thus, the 
virtual dimensions satisfy the   relation
$$
\ddim(\M \times_{\N} \M') = \ddim(\M) + \ddim(\M') - \ddim(\N)\,.
$$
\end{rmk}

\begin{prop}\label{prop:PullbackEtale}
Pullbacks of \'etale fibrations are \'etale fibrations.
\end{prop}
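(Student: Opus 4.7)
The plan is to show that at every classical point of the pullback $\M\times_\N\M'$, the tangent complex of the pullback is itself the pullback (in the category of cochain complexes) of tangent complexes, and then invoke the fact that pullbacks of surjective quasi-isomorphisms of cochain complexes are quasi-isomorphisms.

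Let $p:\M\to\N$ be an étale fibration and $f:\M'\to\N$ an arbitrary morphism, with underlying base manifolds $M$, $N$, $M'$ and graded vector bundles $L$, $E$, $L'$. By Proposition~\ref{pullex}, the pullback $\M\times_\N\M'\to\M'$ exists and is a fibration, with base $M\times_N M'$ and graded vector bundle $L\times_E L'$, and it induces a pullback on classical loci. So a classical point $R$ of $\M\times_\N\M'$ corresponds to a compatible pair of classical points $P\in\pi_0(\M)$ and $P'\in\pi_0(\M')$ with $p(P)=f(P')=:Q$ in $\pi_0(\N)$.

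The first key step is to identify the tangent complex at $R$. In degree zero, because $p$ is a submersion, the usual tangent space of a pullback of manifolds applies, giving
\[
T(M\times_N M')|_R \;=\; TM|_P \times_{TN|_Q} TM'|_{P'}.
\]
In each positive degree, the identity $(L\times_E L')^i|_R=L^i|_P\times_{E^i|_Q}{L'}^i|_{P'}$ holds by construction. Moreover, under the description of the pullback $L_\infty$-bundle as the coproduct of sheaves of cdgas in the proof of Proposition~\ref{pullex}, the curvature and twisted differential at $R$ restrict to the corresponding structures on $\M$ and $\M'$; hence both the derivative of the curvature $D_R(\lambda_0\times\lambda'_0)$ and the twisted differential on $L\times_E L'|_R$ are obtained by pulling back the analogous maps for $\M$ and $\M'$ over those of $\N$. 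Therefore the induced map of tangent complexes is a pullback square of cochain complexes:
\[
\begin{tikzcd}
T(\M\times_\N\M')|_R \ar[r] \ar[d] & T\M'|_{P'} \ar[d]^{Tf|_{P'}} \\
T\M|_P \ar[r]_{Tp|_P} & T\N|_Q.
\end{tikzcd}
\]

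The second key step is to observe that, since $p$ is a fibration, the map $Tp|_P\colon T\M|_P\to T\N|_Q$ is degreewise surjective: in degree $0$ because $p$ underlies a submersion of manifolds, and in positive degrees because the linear part of $p$ is degreewise surjective. Since $p$ is also étale, $Tp|_P$ is a surjective quasi-isomorphism, i.e.\ a trivial fibration in the category of fibrant objects of cochain complexes (Example~\ref{ex:CFOcomplex}). Applying Axiom~\ref{axiom:PullbackTriFib} in that category to the above pullback square, the induced map $T(\M\times_\N\M')|_R\to T\M'|_{P'}$ is again a trivial fibration, in particular a quasi-isomorphism. As $R$ was an arbitrary classical point of $\M\times_\N\M'$, this shows the pullback fibration is étale.

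The main obstacle is the first step: verifying that the tangent complex of the pullback $L_\infty$-bundle genuinely identifies with the fibered product of tangent complexes, including the compatibility of the derivatives of the curvatures and of the twisted differentials. Once this algebraic-geometric identification is made, the rest is a direct application of the well-known stability of surjective quasi-isomorphisms of chain complexes under base change.
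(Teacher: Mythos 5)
Your proposal is correct and follows essentially the same route as the paper: identifying the tangent complex of the pullback at a classical point with the fibered product $T\M|_P\times_{T\N|_S}T\M'|_{P'}$ of tangent complexes, and then using that the pullback of a surjective quasi-isomorphism of cochain complexes along any cochain map is again a (surjective) quasi-isomorphism, as in Example~\ref{ex:CFOcomplex}. Your extra details on why the identification holds and why $Tp|_P$ is degreewise surjective simply flesh out what the paper leaves as ``one checks.''
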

\begin{pf}
Let $\M\to \N$ be an \'etale fibration of $L_\infty$-bundles,
 and $\M'\to \N$ an arbitrary morphism of $L_\infty$-bundles.
 Let $\N'=\M\times_{\N}\M'$ be the fibered product, and $S'$  a classical point of $\N'$. 
 Denote the images of $S'$ under the maps $N' \to N$, $N' \to M$ and $N' \to M'$ by $S$, $P$, and $P'$, respectively. One checks that
$$T\N'|_{S'}=T\M|_P\times_{T\N|_S}T\M'|_{P'}\,.$$
The result follows from the fact that the pullback of a surjective quasi-isomorphism of cochain complexes along any cochain map is still a surjective quasi-isomorphism. 
See Example~\ref{ex:CFOcomplex}.
\end{pf}

By a similar method, one shows that the pullback of a weak equivalence along an $L_\infty$-bundle fibration is a weak equivalence.

Recall that a trivial fibration is a weak equivalence which is also a fibration. The following corollary is a consequence of Proposition~\ref{pullex} and Proposition~\ref{prop:PullbackEtale}.  It implies that Axiom~\ref{axiom:PullbackTriFib} in Definition~\ref{def:CFO} is indeed satisfied.

\begin{cor}
Pullbacks of trivial fibrations are trivial fibrations.
\end{cor}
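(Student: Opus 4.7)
The plan is to reduce everything to the two propositions cited immediately before the corollary, namely Propositions~\ref{pullex} and~\ref{prop:PullbackEtale}, together with the easy fact that pulling back a bijection of sets yields a bijection.

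Let $f:\M\to\N$ be a trivial fibration, i.e., a fibration which is also a weak equivalence, and let $g:\M'\to\N$ be any morphism of $L_\infty$-bundles. By Proposition~\ref{pullex}(i), the pullback $\M\times_\N\M'$ exists and the projection $p:\M\times_\N\M'\to\M'$ is a fibration, so the content is in verifying the two conditions of a weak equivalence.

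\textbf{Bijection on classical loci.} By Proposition~\ref{pullex}(ii), the classical locus functor $\pi_0$ sends the pullback square to a pullback square of sets, so
\[
\pi_0(\M\times_\N\M')\cong \pi_0(\M)\times_{\pi_0(\N)}\pi_0(\M').
\]
Because $f$ is a weak equivalence, $\pi_0(\M)\to\pi_0(\N)$ is a bijection of sets, and a bijection pulls back to a bijection along any map of sets. Hence $p$ induces a bijection $\pi_0(\M\times_\N\M')\to\pi_0(\M')$.

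\textbf{\'Etale property.} Since $f$ is both a fibration and a weak equivalence, it is an \'etale fibration in the sense of Proposition~\ref{prop:PullbackEtale}. Applying that proposition directly to $f$ and $g$ yields that $p:\M\times_\N\M'\to\M'$ is again an \'etale fibration; in particular $p$ is \'etale.

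These two items together say exactly that $p$ is a weak equivalence, and combined with the fibration property from Proposition~\ref{pullex}(i) this says $p$ is a trivial fibration. There is no hard step here: the corollary is essentially a bookkeeping exercise combining the two previous results with the compatibility of $\pi_0$ with pullbacks along fibrations.
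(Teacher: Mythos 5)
Your argument is correct and is exactly the route the paper intends: the corollary is stated there as a direct consequence of Proposition~\ref{pullex} (fibration property and compatibility of classical loci with pullbacks) and Proposition~\ref{prop:PullbackEtale} (pullbacks of \'etale fibrations are \'etale fibrations), with the same bookkeeping you carried out. No gaps.
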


Therefore, the only thing left to prove that $L_\infty$-bundles form a category of fibrant objects is the factorization property.

Following is another property of fibrations.  

\begin{prop}\label{prop:OpennessEtale}
Let $(f,\phi):(M,L,\lambda)\to (N,E,\mu)$ be a fibration of $L_\infty$-bundles. Let $P$ be a classical point of $(M,L,\lambda)$. Suppose that $(f,\phi)$ is  \'etale at $P$. Then there exists an open neighborhood $U$ of $P$ in $M$, such that $(f,\phi)$ is   \'etale at every classical point of $(U,L|_U,\lambda|_U)$. 
\end{prop}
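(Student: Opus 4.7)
The plan is to reduce to the linear-fibration case, identify étaleness at a classical point with acyclicity of a ``relative tangent complex,'' and then deduce openness from lower semi-continuity of the ranks of smooth bundle maps.

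First I would invoke Lemma~\ref{lem:IsoToLinearMor} to reduce to the case where $(f,\phi)$ is a linear fibration. Choosing a splitting as in the proof of that lemma, I may assume $L = f^{\ast}E \oplus K$ with $K = \ker \phi_1$ and $\phi_1$ the projection. Since $(f,\phi)$ is linear, the morphism axiom $\phi\circ\lambda = \mu\bullet\phi$ reduces to $\phi_1 \lambda_k(x_1,\ldots,x_k) = \mu_k(\phi_1 x_1,\ldots,\phi_1 x_k)$, and this forces $K$ to be a curved $L_\infty[1]$-ideal; in particular $\lambda_1$ restricts to a smooth bundle map $K^i \to K^{i+1}$ on all of $M$. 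Because $f$ is a submersion, the vertical tangent bundle $T^V_f M = \ker(Tf: TM \to f^{\ast}TN)$ is a genuine vector bundle on $M$.

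Next I would set up the relative tangent complex. At any classical point $P$, the component-wise surjective map $Tf|_P: T\M|_P \to T\M'|_{f(P)}$ fits into a short exact sequence of complexes whose kernel is
\[
K(P):\quad T^V_f M|_P \xrightarrow{d^0|_P} K^1|_P \xrightarrow{\lambda_1|_P} K^2|_P \xrightarrow{\lambda_1|_P} \cdots,
\]
where $d^0|_P$ is the restriction of $D_P\lambda_0$ to vertical vectors (its image lies in $K^1|_P$ since $\phi_1\circ D_P\lambda_0 = D_{f(P)}\mu_0\circ Tf|_P$ vanishes on $T^V_f M|_P$). Étaleness at $P$ is equivalent to acyclicity of $K(P)$. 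Fixing a linear connection $\nabla$ on $L^1$, I can extend $d^0$ to a smooth bundle map $T^V_f M \to K^1$ defined on a neighborhood of $P$, independent of $\nabla$ at classical points. Together with $\lambda_1$, this produces a sequence of smooth bundle maps whose composition of consecutive differentials vanishes at every classical point (using $\lambda_1(\lambda_0)=0$ and $\lambda_1^2 = -\lambda_2(\lambda_0,-)$).

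Finally I would invoke lower semi-continuity of rank: for each $i$ there is an open neighborhood $U_i$ of $P$ on which $\mathrm{rank}(d^i) \geq \mathrm{rank}(d^i|_P)$, and the intersection $U = \bigcap_i U_i$ is open since $L$ is bounded. For any classical $P' \in U$ the sequence $K(P')$ is a genuine complex, and
\[
\dim H^i\bigl(K(P')\bigr) = \dim\ker d^i|_{P'} - \mathrm{rank}\, d^{i-1}|_{P'} \leq \dim\ker d^i|_P - \mathrm{rank}\, d^{i-1}|_P = \dim H^i\bigl(K(P)\bigr) = 0,
\]
so $K(P')$ is acyclic and $(f,\phi)$ is étale at $P'$. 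The main obstacle is conceptual rather than computational: ensuring that the relative tangent complex can be extended to a sequence of smooth bundle maps on a genuine neighborhood (not merely on $\pi_0(\M)$), so that the rank semi-continuity argument applies; the splitting $L = f^{\ast}E \oplus K$ available for linear fibrations is precisely what makes this possible.
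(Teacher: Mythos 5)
Your proof is correct and follows essentially the same route as the paper: both identify \'etaleness at a classical point with acyclicity of the kernel of the degreewise surjective morphism of tangent complexes induced by the fibration, and conclude by openness of the acyclicity locus of a bounded complex of vector bundles (rank semicontinuity). The paper phrases this directly as a complex of topological vector bundles over the classical locus $Z(\lambda_0)$, whereas you make the same point explicit by reducing to a linear fibration and extending $D\lambda_0$ via a connection to a smooth bundle map on an open subset of $M$ --- a harmless elaboration that in effect supplies the semicontinuity argument the paper leaves implicit.
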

\begin{pf}
For any $L_\infty$-bundle $\M= (M,L,\lambda)$, the tangent complexes at the points of $Z=\pi_0(\M)$ fit together into a topological vector bundle over the topological space $Z$. The morphism $(f,\phi)$ defines a degree-wise surjective morphism of complexes over $Z$. Hence the kernel is a complex of vector bundles on $Z$, which is acyclic at the point $P\in Z$.  For a bounded complex of topological vector bundles, the acyclicity locus is open.
\end{pf}

\subsection{The shifted tangent $L_\infty$-bundles}\label{sec:Ping}

\newcommand{\qQ}{Q}
\newcommand{\tQ}{\hat{\qQ}[-1]}
\newcommand{\tQQ}{\tilde{\qQ}}
\newcommand{\calx}{\XX}
\newcommand{\Aaa}{\AA}
\newcommand{\plambda}{\hat{\lambda}}

Let $\M=(M,\A, \qQ)$ be a dg manifold. Let $\Omega^1_\A$ be the sheaf of
smooth one forms on $\M$, which is a sheaf of graded $\A$-modules.
Then $\Sym_\A(\Omega^1_\A[1])$ is  a sheaf of  graded
$\O_M$-algebras on $M$, and
 $\big(M, \Sym_\A(\Omega^1_\A[1])\big)$ is a graded manifold, denoted  $T\M[-1]$. 
The Lie derivative $\LL_\qQ$ with respect to $\qQ$ defines the structure of a
sheaf of differential graded $\A$-modules on $\Omega^1_\A$. We pass to
$\Sym_\A(\Omega^1_\A[1])$ to define  a sheaf of differential graded
$\rr$-algebras on $M$. Thus  $T\M[-1]=(M, \Sym_\A(\Omega^1_\A[1]), \LL_\qQ)$
is  a dg manifold.

\begin{rmk}
For  a  graded manifold $\M=(M,\A)$, $T\M$ is the graded manifold
$(M, T_\A)$, where $T_\A=\Sym_\A \Omega^1_\A$ is a sheaf of  graded
$\O_M$-algebras on $M$. 
The bundle $T\M \to \M$ is called the tangent bundle of $\M$, and
it is a vector bundle in the category of graded manifolds.
If $\M=(M,\A, \qQ)$ is a dg manifold, 
it is standard \cite{MR3319134,MR3754617} that its tangent bundle $T\M$ is naturally  equipped
with a homological vector field $\hat{\qQ}$, called the complete lift \cite{MR0350650}, which makes it into a dg manifold.
The degree $1$ derivation $\hat{\qQ}: T_\A\to T_\A$ is
essentially induced by the Lie   derivative $\LL_\qQ$. 
 According to Mehta \cite{Mehta},
$\hat{\qQ}$ is a linear vector field with respect to
the tangent vector bundle $T\M\to \M$, therefore the 
shifted functor makes sense \cite[Proposition 4.11]{Mehta}. 
As a consequence, $T\M[-1]$, together with the homological
vector field $\tQ$,   is a dg manifold. It is simple
to check that the resulting dg manifold 
coincides with  the dg manifold  $(M, \Sym_\A(\Omega^1_\A[1]), \LL_\qQ)$ 
described above. Note that in this case
both $T\M\to \M$ and $T\M[-1]\to \M$  are vector bundles in the category
of dg manifolds.
\end{rmk}

If $(M,\A,Q)$ comes from an $L_\infty$-bundle $(M,L,\lambda)$ via the comparison functor (\ref{eq3}), 
then so does $(M,\Sym_\A(\Omega^1_\A[1]),\LL_\qQ)$ as we shall  see  below.

The underlying graded manifold of $T\M[-1]$ is $TL[-1]$, which
admits a double vector bundle structure
\begin{equation}\label{squr1}
\vcenter{\xymatrix{TL[-1]\rto\dto &TM[-1]\dto\\
L\rto &M }}
\end{equation}
A priori, $TL[-1]$ is {\it  not} a vector bundle over $M$.
However, by choosing a linear connection  $\cntnL$ on $L\to M$,
 one  can identify  $TL$ with $TM \times_M L \times_M L$.
Hence, one obtains a diffeomorphism   
\begin{equation}
\label{eq:phinabla}
\phi^{\cntnL}: \ \ TL[-1]\xxto{\cong}  TM[-1] \times_M L[-1] \times_M  L \, .
\end{equation} 
The latter is a graded vector bundle over $M$, namely it is $ TM[-1] \oplus L[-1] \oplus L$. 
On the level of sheaves, the diffeomorphism \eqref{eq:phinabla} is equivalent to
a splitting of the following
 short exact sequence of sheaves of graded $\A$-modules over $\O_M$
\begin{equation}\label{notsplit}
\xymatrix{
0\rto & \Omega^1_M [1]\otimes_{\O_M}\A\rto & \Omega^1_\A [1]\rto &
\Omega^1_{\A/\O_M}[1]\rto & 0\rlap{\,.}}
\end{equation}
where  $\A=\Sym_{\O_M}L^\vee$.

Thus one can transfer, via $\phi^{\cntnL}$, the homological vector field
  $\LL_\qQ$ on $TL[-1]$ into a  homological vector field $\tQQ$ on
the graded vector bundle $TM[-1]\oplus L[-1]\oplus L$. In what follows, we show that
$\tQQ$ is indeed tangent to the fibers of the graded vector  
bundle $TM[-1]\oplus L[-1]\oplus L$. Therefore, we obtain
an $L_\infty$-bundle with base manifold $M$.
To describe the $L_\infty[1]$-operations on
 $TM[-1]\oplus L[-1]\oplus L$, induced by $\tQQ$,  
let us introduce a formal variable $dt$ of degree $1$, and write
 $TM[-1]=TM\,dt$ and $L[-1]=L\, dt$, respectively. In the sequel, we will use
both notations $[-1]$ and $dt$ interchangely.

\begin{prop}
\label{pro:TM1}
Let $\M=(M,L,\lambda)$ be an $L_\infty$-bundle.
Any linear connection $\cntnL$ on $L$ induces an $L_\infty$-bundle
structure on $(M, \, TM \, dt\oplus L \, dt \oplus L,\,  \mu)$, where 
$$\mu  = \lambda+\tilde\lambda+\cntnL\lambda\,.$$ 
Here, for all $k\geq0$, $\lambda$, $\tilde\lambda$, and $\cntnL\lambda$ are
given, respectively, by  
\begin{eqnarray}
&&\lambda_k(\xi_1 ,\ldots,\xi_k)=
\lambda_k(x_1,\ldots,x_k), \label{eq:lambda}\\
&&\tilde\lambda_k(\xi_1 ,\ldots,\xi_k)=
\sum_{i=1}^k(-1)^{|x_{i+1}|+\ldots+ |x_k|}\lambda_k(x_1,\ldots,y_i,\ldots,x_k)\,dt\, , \\
&&(\cntnL\lambda)_{k+1}(\xi_0 ,\ldots,\xi_k)
=\sum_{i=0}^k(-1)^{|x_{i+1}|+\ldots+ |x_k|}(\cntnL_{v_i}\lambda_k)(x_0,\ldots,\hat
x_i,\ldots,x_k)\,dt\, , \qquad
\end{eqnarray}
$\forall \, \xi_i = v_i \, dt + y_i \, dt + x_i \in TM \, dt\oplus L \, dt \oplus L$,   
$i=0, \ldots k$.

Moreover,  different choices of linear connections $\cntnL$ on $L$ induce
isomorphic $L_\infty$-bundle structures on $TM \, dt\oplus L\, dt\oplus L$.
\end{prop}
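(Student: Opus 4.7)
The plan is to realise $T\M[-1]$ concretely as an $L_\infty$-bundle on the graded vector bundle $TM\,dt\oplus L\,dt\oplus L$ by transporting the homological vector field $\LL_Q$ across the diffeomorphism $\phi^{\cntnL}$ and then identifying the transferred operations with the stated formulas. Since the intrinsic dg manifold $T\M[-1] = (M, \Sym_\A(\Omega^1_\A[1]), \LL_Q)$ is connection-free, the connection only enters through the choice of trivialisation.

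First, I would construct $\phi^{\cntnL}$ from the dual side by splitting the short exact sequence \eqref{notsplit}: the connection $\cntnL$ provides a horizontal splitting $\Omega^1_\A[1] \cong (\Omega^1_M[1]\otimes_{\O_M}\A) \oplus (L^\vee[1]\otimes_{\O_M}\A)$, and applying $\Sym_\A$ identifies $\Sym_\A(\Omega^1_\A[1])$ with $\Sym_{\O_M}\bigl(T^\vee M[1]\oplus L^\vee[1]\oplus L^\vee\bigr)$, the sheaf of fibrewise polynomial functions on $TM\,dt\oplus L\,dt\oplus L$. Next, I would check that the transferred vector field $\tilde Q$ is tangent to the fibres of this graded vector bundle over $M$: since $Q_\lambda$ is tangent to the fibres of $L\to M$, its complete lift is linear with respect to $TL\to L$; combined with the linearity of the splitting coming from a \emph{linear} connection, this ensures that $\tilde Q$ preserves the fibrewise polynomial filtration. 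By Proposition~\ref{pro:Batchelor}, $\tilde Q$ therefore arises from a unique family of fibrewise multilinear operations $\mu_k$ giving an $L_\infty$-bundle structure on $TM\,dt\oplus L\,dt\oplus L$, and the Maurer--Cartan identity $\mu\circ\mu=0$ follows automatically from $[\LL_Q,\LL_Q]=0$.

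Third, I would derive the explicit formulas by a local coordinate computation. In a local frame $(e_\alpha)$ for $L$ with Christoffel symbols $\Gamma^\alpha_{i\beta}$, the horizontal lift of the dual coframe is $\tilde\varepsilon^\alpha = d\xi^\alpha + \Gamma^\alpha_{i\beta}\xi^\beta\,dx^i$, and the fibre coordinates of $TM\,dt\oplus L\,dt\oplus L$ are dual to $dx^i, \tilde\varepsilon^\alpha, \varepsilon^\alpha$. Writing $\lambda_k$ via structure functions $\lambda^\gamma_{\alpha_1\cdots\alpha_k}(x)$, one computes $\LL_{Q_\lambda}$ applied to each of $dx^i$, $\tilde\varepsilon^\alpha$ and $\varepsilon^\alpha$; dualising, the three natural summands of $\LL_{Q_\lambda}$ correspond exactly to (a) terms that touch neither the tangent factor nor the structure functions, producing $\lambda_k(x_1,\dots,x_k)$; (b) Leibniz-type terms linear in the tangent factor $y_i\,dt$ with structure functions frozen, producing $\tilde\lambda_k$; and (c) terms where a horizontal derivative $v_i$ differentiates the structure functions of $\lambda_k$, which assemble into $(\cntnL\lambda)_{k+1}$. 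The Koszul signs $(-1)^{|x_{i+1}|+\cdots+|x_k|}$ in the stated formulas arise from moving $dt$ past the remaining entries.

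For the independence of the connection, given two linear connections $\cntnL$ and $\cntnL{}'$ with difference $A=\cntnL{}'-\cntnL \in \Gamma(\Omega^1_M\otimes\End(L))$, the composite $\phi^{\cntnL{}'}\!\circ (\phi^{\cntnL})^{-1}$ is a diffeomorphism of $TM\,dt\oplus L\,dt\oplus L$ over $M$, fibrewise the identity on $L\,dt$ and $L$, and on $TM\,dt$ given by the linear shear $v\,dt + y\,dt + x \mapsto v\,dt + (y + A(v,x))\,dt + x$. Because both trivialisations transport the single intrinsic vector field $\LL_Q$, this shear automatically intertwines the two transferred structures; one then lifts it to a full morphism of $L_\infty$-bundles by the argument of Proposition~\ref{oneiso} (solving $\psi\bullet\phi = \mathrm{id}$ recursively). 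The main obstacle is the third step: tracking Koszul signs and the substitution $d\xi^\alpha \leftrightarrow \tilde\varepsilon^\alpha$ throughout the complete-lift construction, which requires care but is entirely mechanical once the trivialisation has been set up.
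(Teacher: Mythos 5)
Your proposal is correct and follows the same overall strategy as the paper: trivialize $TL[-1]$ via the connection-induced splitting \eqref{eq:phinabla}/\eqref{notsplit}, transport $\LL_Q$, observe that the transported field kills pullbacks of $C^\infty(M)$ so that it is tangent to the fibers over $M$ and hence encodes fiberwise operations, and handle connection-independence via the shear $\phi^{\bar\nabla}\circ(\phi^{\cntnL})^{-1}$. Where you diverge is in the derivation of the explicit formulas: you propose a local-frame computation with Christoffel symbols, whereas the paper works invariantly, writing $\tQQ$ on the three types of generators $\Omega^1(M)[1]$, $\Gamma(M,L^\vee[1])$, $\Gamma(M,L^\vee)$ and splitting the de Rham differential as $d=d^{\cntnL}+d_\rel$, so that $\tilde\lambda$ appears as $-d_\rel\circ\LL_Q$ and $\cntnL\lambda$ as $-[\LL_Q,d^{\cntnL}]=-(d^{\cntnL}Q)$; your coordinate route is more mechanical and carries the sign bookkeeping locally, while the paper's route gives the three summands and their invariance for free. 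For the last assertion, your observation that the intertwining of $\mu^{\cntnL}$ and $\mu^{\bar\nabla}$ is automatic (both being transports of the same intrinsic $\LL_Q$) is a clean justification; the paper instead writes the isomorphism \eqref{eq:CanonicalIsoTM[-1]} explicitly. One small correction: the shear $v\,dt+y\,dt+x\mapsto v\,dt+(y+A(v,x))\,dt+x$ is not linear but fiberwise quadratic, bilinear in $(v,x)$ --- this is precisely why it is not a vector-bundle map and corresponds to a morphism with a nontrivial quadratic component $\Phi_2$ (and $\Phi_1=\id$), after which Proposition~\ref{oneiso} indeed yields that it is an isomorphism of $L_\infty$-bundles, as you say.
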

\begin{pf}
Let  $\A=\Sym_{\O_M}L^\vee$ be the sheaf of functions on $\M$,
$R=\Gamma (M, \O_M)$  the algebra of $C^\infty$-functions on $M$,
  and
$\Aaa=\Gamma (M, \Sym L^\vee)$ the space of the global sections
of $\A$  being considered as  fiberwise  polynomial functions on $L$.

To describe the homological vector field $\tQQ$ on the graded vector
 bundle $TM \, dt \oplus L \, dt \oplus L$, 
we  will  compute its induced  degree 1 derivation on the
algebra of global sections of the  sheaf of functions.
First of all,  since  $Q$ is tangent to the fibers of 
 $\pi: L\to M$, it follows
that 
$$ \tQQ (f)= L_Q (\pi^* f)=0,  \ \ \forall f \in  R.$$
 As a consequence, the induced homological
vector field $\tQQ$ on  $TM\, dt\oplus L\, dt \oplus L$  is tangent
to the fibers. Hence we indeed obtain an $L_\infty$-bundle
on $TM\, dt\oplus L\, dt\oplus L$ with base manifold $M$.

To compute the $L_\infty[1]$-operations, we need to compute $\tQQ$
on the three types of generating functions $\Omega^1 (M)[1]$, $ \Gamma (M,  L^\vee[1])$ and
 $\Gamma (M, L^\vee)$, by using the identification  $\phi^{\cntnL}$ in  \eqref{eq:phinabla}
and    applying $\LL_Q$.

Note that the identification  $\phi^{\cntnL}$ induces the identity map
on $\Aaa=\Gamma (M, \Sym L^\vee)$. Therefore,
 for any  $\xi \in \Gamma (M, L^\vee)$,
$$\tQQ \xi=\LL_Q \xi=Q (\xi)=\sum_{k=0}\lambda_k^\vee (\xi )\in \bigoplus_k 
 \Gamma (M, \Sym^k L^\vee)$$
Hence, by taking its dual,   we obtain $\lambda$ as in Eq. \eqref{eq:lambda}.

The map  $\phi^{\cntnL}$ in  \eqref{eq:phinabla} induces an isomorphism:
\begin{equation}
\label{eq:psi}
\psi^{\cntnL}: \ \Gamma(M,\Omega_\A^1[1]) \xxto{\cong}   \big ( \Omega^1 (M) [1]\oplus \Gamma (M,  L^\vee[1])\big)\otimes_{R}\Aaa \,.
\end{equation}

From now on, we identify the  right hand side with the left hand side.
Denote by $d$ the composition of the de Rham differential 
(shifted by degree $1$) with the isomorphism $\psi^{\cntnL}$ as in
\eqref{eq:psi}: 
$$d: \Aaa\xxto{d_{DR}} \Gamma(M,\Omega_\A^1[1]) \xxto{\psi^{\cntnL}}   \big ( \Omega^1  (M) [1]\oplus \Gamma (M,  L^\vee[1])\big)\otimes_{R}\Aaa$$
By $d^{\cntnL}$,  we denote the covariant differential (shifted by degree $1$)
induced by the linear connection $\cntnL$:
$$d^{\cntnL}: \ \   \Aaa \to \Omega^1 (M)[1] \otimes_{R}\Aaa \hookrightarrow
 \big ( \Omega^1 (M) [1]\oplus \Gamma (M,  L^\vee[1])\big)\otimes_{R}\Aaa$$
It is simple to see that the image
 $(d-d^{\cntnL})(\Aaa) \subseteq  \Gamma (M,  L^\vee[1])\otimes_{R}\Aaa$.
Thus we obtain a map 
$$d_\rel:  \Aaa\to \Gamma (M,  L^\vee[1])\otimes_{R}\Aaa$$
such that 
$$d_\rel=d-d^{\cntnL} .$$
It is easy to check that  $d_\rel$ is in fact  $R$-linear, and therefore
it corresponds to a bundle map 
$$d_\rel: \ \Sym L^\vee \to L^\vee[1]\otimes \Sym L^\vee$$
over $M$, by abuse of notations.
 Furthermore, for any $\eta \in  \Gamma (M, L^\vee)$, 
we have 
$$d_\rel \eta =\eta [1] \, .$$
That is, when being restricted to  $L^\vee$, the bundle
map $d_\rel$ becomes the degree shifting
map  $L^\vee \to L^\vee [1]$,
and for general  $\Sym L^\vee$, one needs to apply the Leibniz rule.

Since $[\LL_Q, \ d]=0$,  it thus  follows that, for any
$\eta \in \Gamma (M, L^\vee)$,
$$ \tQQ (\eta[1])=  \LL_Q\circ d_\rel \eta= - d_\rel\circ \LL_Q\eta - [\LL_Q, d^{\cntnL}]\eta\,.$$ 
Now $ \LL_Q\eta\in \Aaa$ and $-d_\rel\circ \LL_Q\eta\in \Gamma (M, L^\vee[1])\otimes_{R}\Aaa$.
 By taking its dual, it gives rise to $\tilde\lambda$.

On the other hand, it is simple to see that 
$$[\LL_Q, d^{\cntnL}](\eta ) =(d^{\cntnL} Q)(\eta )\in \Gamma (M, T^\vee M \otimes \Sym L^\vee), $$
where the homological vector field $Q$ is considered as a section
in $\Gamma (M, 	\Sym L^\vee \otimes L)$, and
 $d^{\cntnL} Q\in \Gamma (M, T^\vee M \otimes \Sym L^\vee \otimes L)$. Hence it follows
that $-[\LL_Q, d^{\cntnL}](\eta)=-(d^{\cntnL} Q)(\eta)\in \Gamma (M, T^\vee M \otimes \Sym L^\vee)$.
 By taking its dual, we obtain  ${\cntnL}\lambda$.

Finally,  since $\pi_* Q=0$, it follows that
$$  \tQQ \theta=\LL_Q(\pi^* \theta )=0, \ \ \forall \theta\in \Omega^1 (M) [1]. $$
Hence the $\Omega^1 (M) [1]$-part does not contribute to  any  $L_\infty[1]$-operations. Therefore we conclude that $\mu = \lambda + \tilde \lambda + \cntnL \lambda$.

To prove the last part, let $\cntnL$ and $\bar\nabla$ be any two linear connections on $L$. 
Denote by $\mu^{\cntnL}$ and $\mu^{\bar\nabla}$ their corresponding $L_\infty[1]$-operations on $TM \, dt \oplus L\, dt \oplus L$, respectively. 
There exists a bundle map $\alpha:TM \otimes L \to L$ such that 
$
\alpha(v,x) = \bar\nabla_v x - \cntnL_v x, \, \forall \, v \in \Gamma(TM), \, x \in \Gamma(L).
$ 
Let $\phi^{\bar\nabla}$ and $\phi^{\cntnL}$ be the maps defined as in \eqref{eq:phinabla}.  Although the map $ \phi^{\bar\nabla} \circ (\phi^{\cntnL})^{-1}$ is not an isomorphism of vector bundles over $M$, it induces an isomorphism of $L_\infty$-bundles:
\begin{equation}\label{eq:CanonicalIsoTM[-1]}
 \Phi^{\bar\nabla,\cntnL} :\big( M, TM \, dt \oplus L\, dt \oplus L, \mu^{\cntnL} \big) \to \big( M, TM \, dt \oplus L\, dt \oplus L, \mu^{\bar\nabla} \big)
\end{equation}
given explicitly by 
\begin{align*}
&\Phi^{\bar\nabla,\cntnL}_1 = \id, \\
&\Phi^{\bar\nabla,\cntnL}_2(\xi_1, \xi_2)= \alpha(v_2,x_1) \, dt + (-1)^{|x_2|}\alpha(v_1,x_2) \, dt,  \\
&\Phi^{\bar\nabla,\cntnL}_n  = 0, \quad \forall n \geq 3,
\end{align*}
for any $\xi_i = v_i\, dt + y_i \, dt + x_i \in TM \, dt \oplus L\, dt \oplus L$, $i =1,2$. 
This concludes the proof.
\end{pf}

\begin{defn}\label{def:ShiftedTangentBundle}
Let $\M=(M,L,\lambda)$ be an $L_\infty$-bundle.
 Choose a linear connection $\cntnL$ on $L$.
We call the induced  $L_\infty$-bundle $(M,TM\, dt\oplus L\, dt\oplus L, \mu)$
as in Proposition \ref{pro:TM1} the {\it shifted tangent bundle }of $\M$, denoted $T\M[-1]$.
\end{defn}

The shifted tangent bundle of an $L_\infty$-bundle corresponds to the shifted tangent bundle of the corresponding dg manifold of finite positive amplitude under the equivalence provided by Proposition~\ref{pro:Batchelor}.

\section{The derived path space}\label{sec:DerPathSp}

The purpose of this section is to construct the derived path space of an
$L_\infty$-bundle $\M$. This will lead to a proof of the factorization
property, i.e.\ Axiom~\ref{axiom:Factorization} in Definition~\ref{def:CFO}, for $L_\infty$-bundles.

\subsection{Short geodesic paths}\label{sec:ShortGeodesic}

To construct derived path spaces, we need short geodesic paths, by which we mean geodesic paths defined on a fixed open interval containing $[0,1]$. 

We choose an affine connection $\cntnM$ on $M$, which gives rise to the notion of geodesic path in $M$, and defines an exponential map for $M$.

Let $I=(a,b)$ be an open interval containing $[0,1]$.  We will use $I$ as the domain for all our paths.

Using the exponential map, one shows the following.

\begin{prop}\label{prop:PgM}
There exists a family of geodesic paths, parameterized by a manifold $P_gM$ 
\begin{equation}
\label{eq:geodesic}
P_gM\times I\longrightarrow M\,, (\gamma, t)\to \gamma (t) 
\end{equation}
such that, in the diagram
\begin{equation}
\label{eq:PgM}
\begin{split}
\xymatrix{
& P_gM\drto^{\gamma(0)\times \gamma(1)}\dlto_{\gamma(0)\times \gamma'(0)}\\
TM&& M\times M\\
&M\rlap{\,,}\ulto^0\uuto^{\text{\rm const}}\urto_\Delta}
\end{split}
\end{equation}
the two upper diagonal maps are open embeddings. We call $P_gM$ a manifold of {\em short geodesic paths }in $M$. 
\end{prop}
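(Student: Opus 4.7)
The plan is to realize $P_gM$ concretely as an open neighborhood of the zero section in $TM$, using the geodesic flow of $\cntnM$ to convert initial velocity data into a full geodesic path on $I$.

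First I would recall the standard local-existence result for geodesics: for each $P \in M$ and each $v \in T_PM$, there is a unique maximal geodesic $\gamma_v$ satisfying $\gamma_v(0)=P$, $\gamma_v'(0)=v$, defined on an open interval in $\rr$ depending smoothly on $v$. Let
\[
U \;=\; \{\, v \in TM \;:\; \gamma_v\ \text{is defined on all of } I\,\}.
\]
By the smooth dependence of solutions of ODE on initial conditions (applied on the compact interval $\ol{I'}$ for any $I'\Subset I$ with $[0,1]\subset I'$), $U$ is open in $TM$ and contains the zero section, since the constant geodesic $\gamma_0(t)\equiv P$ is defined on all of $\rr$. I would then \emph{define} $P_gM := U$, with the parameterized family \eqref{eq:geodesic} given by $(v,t)\mapsto \gamma_v(t)$, which is smooth in $(v,t)$ on $U\times I$.

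With this definition, the diagonal map $P_gM \to TM$, $\gamma\mapsto(\gamma(0),\gamma'(0))$, is literally the identity $U \hookrightarrow TM$, so it is an open embedding by construction. The commutativity of both triangles in \eqref{eq:PgM} is immediate: the zero section $M\to TM$ corresponds to constant paths, and evaluation at $(0,1)$ on a constant path gives $\Delta$.

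The substantive point is that $\gamma\mapsto (\gamma(0),\gamma(1))$ is an open embedding after possibly shrinking $U$. Under the identification $P_gM=U\subset TM$, this map is $\Psi:v\mapsto (\pi(v),\exp_{\pi(v)}(v))$, i.e.\ the standard map used in the tubular neighborhood theorem. Along the zero section $M\subset TM$, one has the canonical splitting $T_{0_P}(TM)\cong T_PM\oplus T_PM$ (horizontal plus vertical), and a direct computation gives
\[
T_{0_P}\Psi(w_h,w_v)\;=\;\bigl(w_h,\, w_h+w_v\bigr)\,\in\,T_PM\oplus T_PM\,,
\]
which is a linear isomorphism. By the inverse function theorem, $\Psi$ is a local diffeomorphism on some open neighborhood $V\subset TM$ of the zero section. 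A standard argument (present in any proof of the tubular neighborhood theorem) upgrades this to injectivity on a possibly smaller open neighborhood $V'\subset V$ of the zero section: cover $M$ by relatively compact opens on which injectivity follows by a compactness-plus-local-injectivity argument, then take the union of the resulting neighborhoods. Replacing $U$ by $U\cap V'$ (still open and containing the zero section), the map $\Psi$ becomes an open embedding into $M\times M$.

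The main obstacle I anticipate is exactly this last step: two conditions must be simultaneously arranged on one open set $U$ — (a) geodesic completeness on $I$ for every $v\in U$, and (b) global injectivity of $(\pi,\exp_1)$ on $U$. Since each is satisfied on some open neighborhood of the zero section, intersecting them is harmless; the only care needed is to ensure $U$ is still open and still contains the zero section, which is automatic. After this intersection, the resulting $P_gM=U$ satisfies all requirements of the proposition.
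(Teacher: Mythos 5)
Your overall route is exactly the one the paper intends (and sketches in the remark following the proposition): identify $P_gM$ with an open neighborhood of the zero section of $TM$ via $\gamma\mapsto(\gamma(0),\gamma'(0))$, observe that $(\gamma(0),\gamma(1))$ then becomes $v\mapsto(\pi(v),\exp^{\cntnM}_{\pi(v)}(v))$, and conclude by the inverse function theorem plus the standard injectivity-shrinking step of the tubular neighborhood theorem. Your differential computation $T_{0_P}\Psi(w_h,w_v)=(w_h,w_h+w_v)$ and the subsequent shrinking argument are correct and are precisely what the paper's appeal to the tubular neighborhood theorem amounts to.

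There is, however, one step that fails as written: the claim that $U=\{v\in TM:\gamma_v\text{ is defined on all of }I\}$ is open. Smooth dependence on initial conditions gives openness of the condition ``$\gamma_v$ is defined on $K$'' only for a \emph{compact} $K$; since $I=(a,b)$ is open, $U$ is merely a countable intersection of such open sets and need not be open. Concretely, take $M=\rr^2\setminus\{(1,0)\}$ with the flat connection and $I=(-\eps,1+\eps)$: for $v_s=(s,0)$ based at the origin, $\gamma_{v_s}$ is defined on $I$ if and only if $s\leq 1/(1+\eps)$, so $v_{1/(1+\eps)}\in U$ is not an interior point of $U$. The repair is immediate and costs nothing: define $P_gM$ (before the later shrinking) as $\{v\in TM:\gamma_v\text{ is defined on }[a,b]\}$, which is open by the tube lemma applied to the open domain of the geodesic flow, contains the zero section (constant geodesics are complete), and consists of geodesics defined on $I$; equivalently, replace $U$ by its interior and check via the same compactness argument that the zero section lies in it. With that substitution the rest of your argument goes through unchanged.
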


For any $t\in I$, the map $\ev_t: P_g M\to M$, $t\mapsto \gamma (t)$,
(see \eqref{eq:geodesic}) is called the {\it evaluation map}.

Note that as a  smooth manifold, $P_gM$ is diffeomorphic to an 
 open neighborhood of  the zero section of the tangent bundle $TM$
via the left  upper diagonal map in diagram \eqref{eq:PgM}.
Under such an identification,  the evaluation map
$\ev_0: P_gM\to M$ becomes the projection $ TM \to M$,  while
$\ev_1: P_gM\to M$ becoming the exponential map $TM \to M, \, v_P\to\exp^{\cntnM}_P (v_P)$.
The combination of upper diagonal maps in diagram \eqref{eq:PgM}
gives rise to a diffeomorphism from an open neighborhood of  the zero section of the tangent bundle $TM$ to an open neighborhood of the diagonal
of $M\times M$, known as tubular  neighborhood theorem of smooth manifolds.

Let $L$ be a vector bundle over $M$. Let $\cntnM$ be an affine connection on $M$, and $\cntnL$ a linear connection on $L$. We will need the following two vector bundles, $P_{\con}L$ and $P_{\lin}L$, over $P_gM$:

\begin{defn}\label{def:CovConstPath}
The \textit{bundle of covariant constant paths in $L$} is the vector bundle $P_{\con}L$ over $P_gM$ whose fiber over a short geodesic path $a \in P_gM$ is the vector space of covariant constant sections $\Gamma_{\con}(I,a^\ast L)$ of $a^\ast L$ over $I$. 
Here, a \textit{covariant constant section} $\alpha$ of $a^\ast L$ is a path $\alpha: I \to L$ over $a:I \to M$ whose induced path in $L|_{a(0)}$ (via the parallel transport with respect to $\cntnL$) is a constant path. 
\end{defn}

It is easy to see that $P_{\con}L$ is a smooth vector bundle over $P_gM$ since it can be identified with the pullback bundle of $L$ to $P_g M$ via the evaluation
map $\ev_0:P_gM\to M$ at $0$. 
In particular, by taking $L=TM$, we have the vector bundle $P_{\con}TM$ over $P_g M$.

\begin{defn}\label{def:CovLinPath}
The \textit{bundle of covariant linear paths in $L$} is the vector bundle $P_{\lin}L$ over $P_gM$ whose fiber over a short geodesic path $a \in P_gM$ is the vector space of covariant linear sections $\Gamma_{\lin}(I,a^\ast L)$ of $a^\ast L$ over $I$. Here, a \textit{covariant linear section} $\alpha$ of $a^\ast L$ is a path $\alpha: I \to L$ over $a:I \to M$ whose induced path in $L|_{a(0)}$ (via the parallel transport with respect to $\cntnL$) is a line. 
\end{defn}

As a smooth vector bundle over $P_gM$, the bundle $P_{\lin}L$ can be identified with $\ev_0^\ast L \oplus \ev_1^\ast L$.

\subsection{The derived path space of a manifold}\label{sec:coam}

Now we construct derived path spaces of a manifold $M$. Let $\cntnM$ be an affine connection on $M$, and $P_\con TM$ the induced bundle of covariant constant paths in $TM$. 
Notice that there is a canonical section of $P_\con TM$ over $P_gM$, which maps a path to its derivative.  We denote it
by $D:P_gM\to P_\con TM$. Then 
$$\P=(P_gM,P_\con TM\,dt,D)$$ is an $L_\infty$-bundle of amplitude~1, the
{\it derived path space }of $M$. (Here $dt$ is a formal variable of degree $+1$.) 
The classical locus of $\P$ is the
set of constant paths, as a geodesic path is constant if and only if
its derivative vanishes.

Mapping a point $P\in M$ to the constant path at $P$ gives rise to a
morphism of $L_\infty$-bundles $M\to \P$. In fact, this map is a weak equivalence: It is
a bijection on classical loci because the classical locus of $\P$
consists of the constant paths.  To see that it is \'etale, consider a
point $P\in M$, and the corresponding constant path $a:I\to
M$. Then we have a short exact sequence of vector spaces
\begin{equation}\label{ses1}
TM|_P\longrightarrow T(P_gM)|_a\longrightarrow TM|_P\,.
\end{equation}
The first map is the map induced by $M\to P_gM$ on tangent spaces, the
second is the derivative of $D$, occurring in the definition of the
tangent complex of $\P$. To see that this sequence is exact, identify
$P_gM$ with an open neighborhood of the zero section in $TM$. This
identifies $T(P_gM)|_a$ with $TM|_P\times TM|_P$. The first map in
(\ref{ses1}) is then the inclusion into the first component. The second map is the
projection onto the second component. Hence (\ref{ses1}) is a short exact sequence, and $M\to \P$ is \'etale.

Moreover, the product of the two evaluation maps defines a fibration
of $L_\infty$-bundles $\P\to M\times M$.  We have achieved the desired
factorization of the diagonal of $M$ via $\P$, proving that $\P$ is,
indeed, a path space object for $M$ in the sense of a category of fibrant objects. 

To sum up, we have the following

\begin{prop}
Let $M$ be a manifold. 
The triple $(P_gM,P_\con TM\,dt,D)$ is a path space object of $M$ in the category of $L_\infty$-bundles. 
\end{prop}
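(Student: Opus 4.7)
The plan is to exhibit the two morphisms $s: M \to \P$ and $e: \P \to M \times M$ witnessing $\P$ as a path space object, verify that $s$ is a weak equivalence and $e$ a fibration, and check $e\circ s = \Delta$. Since both $M$ and $M\times M$, viewed as $L_\infty$-bundles, carry the zero graded vector bundle, each morphism is entirely determined by its underlying smooth map, all higher operations being vacuously zero.

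First I would take $s$ to be the constant-path inclusion $P \mapsto a_P$ and $e$ to be the double evaluation $\ev_0 \times \ev_1$. These are genuinely morphisms of $L_\infty$-bundles: for $s$ the curvature compatibility $\phi_1(\lambda_0) = \lambda_0'$ reduces to $0 = D(a_P)$, which holds since constant paths have vanishing derivative; for $e$ all compatibility conditions are vacuous because the target operations are zero. The composition identity $e\circ s = \Delta$ is then immediate from $(\ev_0\times\ev_1)(a_P) = (P,P)$.

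To check that $s$ is a weak equivalence, I would first note that the zero locus $Z(D) \subset P_gM$ consists precisely of geodesic paths with vanishing derivative, i.e.\ the constant paths, so $s$ induces a bijection on classical loci. For \'etaleness at $P$, I would use the identification of $P_gM$ with an open neighborhood of the zero section in $TM$ coming from the upper left arrow in diagram~\eqref{eq:PgM}. This identifies $T(P_gM)|_{a_P}$ with $TM|_P \oplus TM|_P$ and $(P_\con TM)|_{a_P}$ with $TM|_P$, so that the tangent complex of $\P$ at $a_P$ becomes
\[
(TM|_P \oplus TM|_P) \xrightarrow{\pr_2} TM|_P
\]
in degrees $0$ and $1$. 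Under this identification $Ts|_P$ is the inclusion $v \mapsto (v,0)$ into the first summand, giving the short exact sequence~\eqref{ses1}, which is manifestly a quasi-isomorphism.

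Finally, to verify that $e$ is a fibration, Proposition~\ref{prop:PgM} asserts that $\ev_0\times\ev_1:P_gM \to M\times M$ is an open embedding, hence in particular a submersion. The linear part of $e$ on graded vector bundles is the zero map $P_\con TM\,dt \to 0$, which is vacuously degree-wise surjective, so both conditions of Definition~\ref{defnfib} are satisfied. No step presents a genuine obstacle: the tubular neighborhood structure provided by Proposition~\ref{prop:PgM} together with the tangent complex calculation already sketched in the paragraphs preceding the proposition furnishes everything one needs.
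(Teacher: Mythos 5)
Your proposal is correct and follows essentially the same route as the paper: the constant-path inclusion is shown to be a weak equivalence by identifying $P_gM$ with a neighborhood of the zero section of $TM$, so that the tangent complex at a constant path becomes $TM|_P\oplus TM|_P\xrightarrow{\pr_2}TM|_P$ and the short exact sequence~\eqref{ses1} gives the quasi-isomorphism, while $\ev_0\times\ev_1$ is a fibration because it is an open embedding on bases and the bundle map to the zero bundle is vacuously surjective. The only difference is that you spell out the curvature-compatibility check $0=D(a_P)$ for the morphisms explicitly, which the paper leaves implicit.
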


\begin{rmk}
The manifold $P_gM$ can be identified with an open neighborhood $U$ of the zero section in $TM$ via Proposition~\ref{prop:PgM}. Let $\pi:U \to M$ be the restriction of the projection $TM \to M$. Then the bundle $P_\con TM\,dt$ can be identified with $\pi^\ast TM[-1] \cong U \times_M TM[-1]$, and the section $D$ is identified with the tautological section $D:U \to \pi^\ast TM[-1], \, v \mapsto (v,v)$. 
With this identification, the diagonal map $\Delta: M \to M \times M$ factorizes as follows:
$$
\begin{tikzcd}[column sep=2.5cm]
M \times 0 \ar[d,two heads] \ar[r,"\text{fiberwise zero map}"] &  \pi^\ast TM[-1]\ar[d,two heads] \ar[r,"\text{fiberwise zero map}"] & (M \times M) \times 0 \ar[d,two heads] \\
M \ar[u,bend right,"0"'] \ar[r,"P \mapsto 0_P"'] & U \ar[u,bend right,"D"'] \ar[r,"v \mapsto  (\pi (v)  {,} \exp^{\cntnM}(v)) "'] & M \times M \ar[u,bend right,"0"']
\end{tikzcd}
$$
\end{rmk}

\subsection{The derived  path space of an $L_\infty$-bundle}\label{sec:DrivedPathSp}

Now we extend the construction of derived path space  to a general $L_\infty$-bundle
to prove the factorization theorem. 
The main theorem is summarized below. 

\begin{thm}
\label{thm:Seoul}
Let $\M=(M,L,\lambda)$ be an $L_\infty$-bundle. Choose an affine connection $\cntnM$
on $M$ and a linear connection $\cntnL$ on the vector bundle $L$. There is an induced
$L_\infty$-bundle 
\begin{equation}
\label{eq:Cov19}
\P\M=\big(P_g M, P_\con (TM\oplus L)\,dt\oplus P_\lin L, \, \delta + \nu \big)\,,
\end{equation}
such that
\begin{enumerate}
\item 
the bundle map
\begin{equation}
\label{eq:UBC1}
\begin{split}
\xymatrix{
L
 \ar[d] \ar[r]^-{\iota} & P_\con (TM\oplus L)\,dt\oplus P_\lin L \ar[d] \\
M \ar[r]_-{\iota} & P_g M
}
\end{split}
\end{equation}
is a {\em linear} weak equivalence of $L_\infty$-bundles  $\M \to \P\M$, where
$\iota$ stands for the natural inclusion map of constant paths: for $l \in L$, identified with an element in $P_{\lin}L$ --- the constant path with value $l$, 
$$
\iota(l) = 0 \cdot dt + l.
$$

\item The bundle map 
\begin{equation}
\label{eq:UBC2}
\begin{split}
\xymatrix@C=4pc{
P_\con (TM\oplus L)\,dt\oplus P_\lin L \ar[d] \ar[r]^-{(\ev_0\oplus \ev_1)\circ \pr} & \ev_0^*L \oplus \ev_1^*L \ar[d] \\
P_g M \ar[r]_-{\ev_0\times \ev_1} & M\times M
}
\end{split}
\end{equation}
is a {\em linear} fibration of $L_\infty$-bundles  $\P\M\to\M\times\M$. Here $\pr: P_\con (TM\oplus L)\,dt\oplus P_\lin L \to P_\lin L$ is the projection onto $P_\lin L$.

\end{enumerate}
In particular, the composition $
\M \to \P\M\to\M\times\M$ is a factorization of the diagonal morphism $\M \to \M \times \M$.

Moreover, for any two choices of affine connections on $M$ and linear connections on $L$, there exists an open neighborhood $U$ of the constant paths in $P_gM$ such that the restrictions to $U$ of their corresponding derived path spaces are isomorphic. 
\end{thm}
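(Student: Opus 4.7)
The plan is to construct $\P\M$ by homotopy transfer from an infinite--dimensional path object modeled on $T\M[-1]$, then verify the two desired properties directly. Concretely, starting from the shifted tangent bundle $T\M[-1]$ of Definition~\ref{def:ShiftedTangentBundle}, whose underlying graded vector bundle is $TM\,dt\oplus L\,dt\oplus L\to M$ with operations $\mu=\lambda+\tilde\lambda+\nabla\lambda$, I would form the bundle over $P_gM$ whose fiber over a short geodesic $a:I\to M$ is the (infinite--dimensional) space $\Gamma\bigl(I,a^\ast(TM\oplus L)\bigr)\,dt\oplus\Gamma(I,a^\ast L)$, equipped with the pointwise curved $L_\infty[1]$--structure inherited fiberwise from $T\M[-1]$, together with the cochain differential $\delta$ coming from the $dt$--shift (morally, the de Rham differential on $I$ paired with the shift isomorphism). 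This is the finite--dimensional avatar of $\Map(I,T\M[-1])$ restricted to short geodesic paths.

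Next, using the linear connection $\nabla$ on $L$ and on $TM$ (induced by $\nabla^M$), parallel transport along $a$ produces a natural contraction $\eta$ of $\delta$ whose kernel is exactly the finite--dimensional subbundle $P_\con(TM\oplus L)\,dt\oplus P_\lin L$: a section splits canonically into its covariant constant/affine--linear part plus an exact term, and $\eta$ is integration against parallel transport. Applying Proposition~\ref{transfertheorem} fiberwise transfers $\delta+$(pointwise operations) to a curved $L_\infty[1]$--structure $\delta+\nu$ on $P_\con(TM\oplus L)\,dt\oplus P_\lin L$. The tree formula (Remark~\ref{rmk:TreeFormula}) is a finite sum for each $k$ since $\M$ has bounded amplitude; because each decoration (parallel transport, the fiberwise $\mu_k$, and the splitting $\pi$) depends smoothly on $a\in P_gM$, the transferred operations are differentiable over $P_gM$. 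Likewise the transfer delivers the explicit inclusion morphism $\iota$ and projection $\tilde\pi$ of curved $L_\infty[1]$--algebras that realize \eqref{eq:UBC1} and the linear part of \eqref{eq:UBC2}.

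It remains to check that \eqref{eq:UBC1} is a weak equivalence and \eqref{eq:UBC2} is a fibration. For \eqref{eq:UBC2}, the projection $\pr:P_\con(TM\oplus L)\,dt\oplus P_\lin L\to P_\lin L\cong \ev_0^\ast L\oplus\ev_1^\ast L$ is fiberwise surjective, and $\ev_0\times\ev_1:P_gM\to M\times M$ is an open embedding by Proposition~\ref{prop:PgM}, hence a submersion. For \eqref{eq:UBC1}: classical points of $\P\M$ are short geodesics $a$ with $(\delta+\nu)_0=0$; using that the degree--$1$ summand of $\nu_0$ comes from $\lambda_0$ along $a$ and that $\delta$ shifts $dt$, one reads off that such $a$ must be constant geodesics at classical points of $\M$, giving the claimed bijection. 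At such a constant path $a\equiv P$, a short calculation in the spirit of \eqref{ses1} identifies $T(\P\M)|_a$ with the mapping cone built from $T\M|_P$ and an acyclic piece, and the linear part of $\iota$ is seen to induce a quasi--isomorphism; the main subtlety here is to chase the identifications $P_\con TM|_a\cong TM|_P$ and $P_\lin L|_a\cong L|_P\oplus L|_P$ and to verify that the differential coming from $\delta+\nu_1$ matches the sum of $D_P\lambda_0$ and $\lambda_1|_P$ on the ``constant'' summand and is acyclic on the complementary summand.

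Finally, for independence of the connections up to an open neighborhood of the constant paths, I would use the isomorphism $\Phi^{\bar\nabla,\nabla}$ of Proposition~\ref{pro:TM1} fiberwise, combined with the uniqueness clause in the transfer theorem (Proposition~\ref{transfertheorem}) and the fact that any two choices of $\nabla^M$ yield diffeomorphic neighborhoods of the diagonal in $M\times M$ via the exponential map. The delicate point is that two different affine connections produce different submanifolds $P_gM\subset\Map(I,M)$, but both contain the constant paths as a common core, so the isomorphism is only guaranteed on a (possibly smaller) open neighborhood $U$ of the constants, which is exactly what is asserted. I expect the technical heart of the argument to lie in verifying smoothness of the transferred operations on $P_gM$ and in checking the tangent--complex computation for \eqref{eq:UBC1}, everything else being formal consequences of the transfer theorem and Proposition~\ref{prop:PgM}.
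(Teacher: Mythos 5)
Your overall route is the same as the paper's (fiberwise curved $L_\infty[1]$-structure on paths in $T\M[-1]$, restriction to short geodesics, homotopy transfer, then checking the two legs and canonicality), but as written it has a genuine gap at the very first step: the zero-ary term $a'\,dt$ is missing. In the paper (Proposition~\ref{pro:paris}) the fiberwise structure is $\delta + a'\,dt + a^\ast\mu$; the tautological derivative $a'\,dt\in\Gamma_\con(I,a^\ast TM)\,dt$ is a \emph{curvature} term, not part of the linear differential, and it is what the de~Rham differential of $I$ contributes under the AKSZ identification (the contraction with the tautological vector field splits, via the connection, into $\delta$ \emph{plus} $a'\,dt$). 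Your construction keeps only ``$a^\ast\mu$ together with the cochain differential $\delta$''. Without $a'\,dt$, the transferred curvature is just $\pi_\lin(a^\ast\lambda_0)$, whose zero locus is the set of \emph{all} short geodesics joining two points of $Z(\lambda_0)$, not only the constant ones; so $\iota$ would not be a bijection on classical loci and part~(1) fails. Your later claim that one ``reads off that such $a$ must be constant'' has no source without this term. This is also where geodesics are needed beyond Proposition~\ref{prop:PgM}: $a'$ is covariant constant precisely because $a$ is a geodesic, which is what puts the curvature inside the finite-dimensional subbundle $P_\con TM\,dt$.

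A second concrete problem is your claim that the contraction $\eta$ alone cuts the big fiber $\Gamma\bigl(I,a^\ast(TM\oplus L)\bigr)\,dt\oplus\Gamma(I,a^\ast L)$ down to $P_\con(TM\oplus L)\,dt\oplus P_\lin L$. The graded bundle has no degree-$0$ summand, so any degree $-1$ operator must vanish on the degree-$1$ part $\Gamma(I,a^\ast TM)\,dt\oplus\Gamma(I,a^\ast L^1)$, and $\delta$ does not touch $\Gamma(I,a^\ast TM)\,dt$ at all; hence $H=\ker[\delta,\eta]$ still contains the whole infinite-dimensional space $\Gamma(I,a^\ast TM)\,dt$, and the transfer cannot produce $P_\con TM\,dt$ from it. The paper avoids this by first restricting to the curved $L_\infty[1]$-\emph{subalgebra} $\widetilde P T\M[-1]|_a$ in which the $TM\,dt$ summand is already the covariant constant sections (a subalgebra because the operations only consume $TM\,dt$ and the curvature $a'\,dt$ is covariant constant for geodesic $a$), and only then transferring with $\eta$ supported on $\Gamma(I,a^\ast L)\,dt$. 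Finally, note that the fact that \eqref{eq:UBC1} and \eqref{eq:UBC2} are morphisms of $L_\infty$-bundles is not ``delivered'' by the transfer theorem: the paper proves the former by showing the linear paths form a subalgebra on which the transferred brackets reduce to $\pi_\lin a^\ast\lambda$ (Lemma~\ref{lem:ForSubalg}), and the latter by showing $(\ev_0\oplus\ev_1)\circ\phi$ is linear because $\im(\phi-\iota)\subset\im\eta\subset\ker(\ev_0\oplus\ev_1)$ (Lemma~\ref{lem:lambda}, Proposition~\ref{wow1}); your canonicality sketch, by contrast, matches the paper's argument in outline.
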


The $L_\infty$-bundle $\P\M$ in \eqref{eq:Cov19} is called the {\it derived path space }of $\M$.

\begin{rmk}\label{rmk:DerPathSpAsVB}
As  graded vector bundles  over $P_g M$,
 $P_\con (TM\oplus L)\,dt\oplus P_\lin L$ is 
 isomorphic to $\ev_0^*(TM\oplus  L)\,dt\oplus \ev_0^*L \oplus \ev_1^*L$,
 where $\ev_0, \ev_1 : P_g M \to M$ are the
evaluation maps at $0$ and $1$, respectively. (When
$P_g M$ is identified with an open neighborhood of the diagonal
of $M\times M$,  $\ev_0$ and  $\ev_1$ correspond
to the projections to the first and the second component, respectively.)
Under this identification, the bundle map \eqref{eq:UBC1} becomes

\begin{equation}
\label{eq:UBC3}
\begin{split}
\xymatrix{
L
 \ar[d] \ar[r]^-{0\oplus \Delta} & \ev_0^*(TM\oplus  L)\,dt\oplus \ev_0^*L \oplus \ev_1^*L \ar[d] \\
M \ar[r]_-{\iota} & P_g M
}
\end{split}
\end{equation}
 where $\Delta: L \to \ev_0^*L \oplus \ev_1^*L$ is the diagonal map.
Similarly,  the bundle map \eqref{eq:UBC2} becomes
\begin{equation}
\label{eq:UBC4}
\begin{split}
\xymatrix{
\ev_0^*(TM\oplus  L)\,dt\oplus \ev_0^*L \oplus \ev_1^*L \ar[d] \ar[r]^-{\pr} & \ev_0^*L \oplus \ev_1^*L \ar[d] \\
P_g M \ar[r]_-{\ev_0\times \ev_1} & M\times M.
}
\end{split}
\end{equation}
Here $\pr$ is the projection onto $ \ev_0^*L \oplus \ev_1^*L$.
\end{rmk}

\begin{rmk}
Theorem \ref{thm:Seoul} (2) is equivalent to stating
that for $t= 0$ and  $1$,  the bundle map
$$
\xymatrix{
P_\con (TM\oplus L)\,dt\oplus P_\lin L\ar[d] \ar[r]^-{\ev_t \circ \pr} & L  \ar[d] \\
P_g M \ar[r]_-{\ev_t} & M
}
$$
is a {\em linear} morphism of $L_\infty$-bundles  $\P\M\to\M$.
This, however, may be   false for arbitrary $t\in I$, which may
need a  morphism with  higher terms. 
\end{rmk}

Our approach is based  on path space construction of $L_\infty$-bundles  
together with the $L_\infty[1]$-transfer theorem.
We will  divide the proof of  Theorem \ref{thm:Seoul}
into several steps, which are discussed in subsequent subsections.

\subsection{The path space of the shifted tangent bundle}

To construct a derived path space $\P\M$, we first construct an infinite-dimensional curved $L_\infty[1]$-algebra over each path $a$ in $M$. The consideration of path spaces is motivated by AKSZ construction which is summarized in Appendix~\ref{sec:AKSZ}.

Recall from Proposition~\ref{pro:TM1} that, for each $L_\infty$-bundle $\M = (M,L,\lambda)$ and a linear connection $\cntnL$ on $L$, we have an associated $L_\infty$-bundle, the shifted tangent bundle,
 $T\M[-1] = (M,TM \, dt  \oplus L \, dt \oplus L, \mu)$. (See Definition~\ref{def:ShiftedTangentBundle}.) 
The family of $L_\infty[1]$-operations is
$\mu =\lambda+\tilde\lambda+\nabla\lambda$.

For every path $a:I\to M$ in $M$, we get an induced curved $L_\infty[1]$-structure in the vector space
$\Gamma(I,a^\ast T\M[-1])=\Gamma(I,a^\ast(TM \oplus L)\,dt\oplus a^\ast L)$, by pulling back $\mu$ via $a$.

The covariant derivative with respect to $a^\ast\nabla$ is  a linear map
$\delta:\Gamma(I,a^\ast L)\to \Gamma(I,a^\ast L)\,dt$.
That is, $\delta (l(t))=(-1)^{k} \, \nabla_{a' (t)}l(t)\, dt$, \, $\forall l(t)\in
\Gamma(I,a^\ast L^{k})$. 
As $\delta^2=0$, it induces the structure of a complex on
$\Gamma(I,a^\ast T\M[-1])$.

We also consider the derivative $a'\, dt \in \Gamma(I,a^\ast TM)\,dt$.

\begin{prop}
\label{pro:paris}
Let $\cntnL$ be a linear connection on $L$. 
The sum $a'\, dt \, +a^\ast\mu$ is a curved $L_\infty[1]$-structure
on the complex $\big(\Gamma(I,a^\ast T\M[-1]), \delta \big) = \big( \pathast ,    \delta\big)$.
\end{prop}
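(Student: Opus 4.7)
The plan is to verify the Maurer--Cartan equation $[\delta,\nu] + \nu\circ\nu = 0$ directly for $\nu := a'\,dt + a^\ast\mu$, treating $a'\,dt$ as a degree-one element concentrated in the $k=0$ slot, so that $\nu_0 = a'\,dt + a^\ast\lambda_0$ and $\nu_k = a^\ast\mu_k$ for $k \geq 1$. The algebraic input is the fiberwise identity $\mu\circ\mu = 0$ on $T\M[-1]$ (since $\mu$ is a curved $L_\infty[1]$-structure), which descends to $(a^\ast\mu)\circ(a^\ast\mu) = 0$ on sections. Because $a'\,dt$ only contributes through the $k=0$ slot, expanding $\nu\circ\nu$ collapses to
\[
(\nu\circ\nu)_n(\xi_1,\dots,\xi_n) \;=\; (a^\ast\mu)_{n+1}(a'\,dt,\xi_1,\dots,\xi_n),
\]
and since $a'\,dt$ has vanishing $L$- and $L\,dt$-parts, only the $\nabla\lambda$-summand of the decomposition $\mu = \lambda + \tilde\lambda + \nabla\lambda$ from Proposition~\ref{pro:TM1} contributes, giving $(-1)^{|x_1|+\cdots+|x_n|}(\nabla_{a'}\lambda_n)(x_1,\dots,x_n)\,dt$ with $x_i$ the $a^\ast L$-component of $\xi_i$.

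Next I would compute $[\delta,\nu]_n$ from the derivation formula. Since $\delta$ vanishes on the $dt$-valued components of $a^\ast T\M[-1]$, only the $a^\ast\lambda_n$-part of the output of $\nu_n(\xi_1,\dots,\xi_n)$ feels $\delta$, and each input $\delta\xi_i$ (which is $L\,dt$-valued) interacts with $\nu_n$ only through the $\tilde\lambda_n$-part of $a^\ast\mu_n$. Combining the sign rule $\delta l = (-1)^{|l|}\nabla_{a'}l\,dt$ with the Leibniz identity
\[
\nabla_{a'}\bigl(\lambda_n(x_1,\dots,x_n)\bigr) \;=\; (\nabla_{a'}\lambda_n)(x_1,\dots,x_n) + \sum_i \lambda_n(x_1,\dots,\nabla_{a'}x_i,\dots,x_n),
\]
all $\nabla_{a'}x_i$-type contributions cancel between $\delta(\nu_n(\xi))$ and $\sum_i \pm \nu_n(\xi_1,\dots,\delta\xi_i,\dots,\xi_n)$, and what remains is $(-1)^{|x_1|+\cdots+|x_n|+1}(\nabla_{a'}\lambda_n)(x_1,\dots,x_n)\,dt$, which is exactly $-(\nu\circ\nu)_n$. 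The arity-$0$ case is checked separately: $\delta(\nu_0) = -(\nabla_{a'}\lambda_0)\,dt$ (from $|\lambda_0|=1$ and $\delta(a'\,dt)=0$), while $\nu_1(\nu_0) = (a^\ast\mu_1)(a'\,dt + a^\ast\lambda_0) = (\nabla_{a'}\lambda_0)\,dt + a^\ast\bigl(\lambda_1(\lambda_0)\bigr) = (\nabla_{a'}\lambda_0)\,dt$, using the $n=0$ curved $L_\infty[1]$-axiom $\lambda_1(\lambda_0)=0$ for $\lambda$, so the two cancel.

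The main obstacle is the Koszul sign bookkeeping at general arity $n$: signs from $\circ$, from the formulas for $\tilde\lambda$ and $\nabla\lambda$, and from the sign rule for $\delta$ must align perfectly. This is not accidental---the decomposition $\mu = \lambda + \tilde\lambda + \nabla\lambda$ of Proposition~\ref{pro:TM1} was engineered precisely so that they do. A conceptual motivation, outlined in Appendix~\ref{sec:AKSZ}, is that $\nu$ is the Maurer--Cartan structure at the (generally non-classical) point $a$ of the AKSZ mapping dg manifold $\Map(TI[1],\M)$: fiberwise $a^\ast\mu$ comes from $Q_\M$, $\delta$ encodes $d/dt$ on $TI[1]$, and $a'\,dt$ measures the failure of $a$ to intertwine the two homological vector fields, so Maurer--Cartan follows from their compatibility on the mapping space.
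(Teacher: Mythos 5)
Your proposal is correct and follows essentially the same route as the paper: a direct check of the Maurer--Cartan equation in which all terms vanish for degree reasons or by $\mu\circ\mu=0$ except the three families $\delta\circ a^\ast\lambda$, $a^\ast\tilde\lambda\circ\delta$, and $a^\ast\nabla\lambda\circ a'\,dt$, whose sum vanishes by the Leibniz rule defining $\nabla_{a'}\lambda_n$. Your separate arity-$0$ check and the AKSZ remark match the paper's treatment (the latter being exactly the heuristic of Appendix~\ref{sec:AKSZ}), so there is nothing to correct.
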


Here we prove Proposition~\ref{pro:paris} by  a  direct verification, 
without referring to the infinite dimensional dg manifold of path spaces.
 The discussion in Appendix~\ref{sec:AKSZ}, however, gives a heuristic
reasoning as to where such a curved $L_\infty[1]$-structure formula comes from.

\begin{pfLooPathSp}
The condition we need to check is
$$(\delta+a'\, dt +a^\ast\mu)\circ(\delta+a'\, dt +a^\ast\mu)=0\,.$$
This reduces  to the condition
$$\delta\circ a^\ast\lambda+a^\ast\tilde\lambda\circ\delta+ a^\ast\nabla\lambda\circ
a'\, dt=0\,,$$
as all other terms vanish, either for degree reasons, or because
$\mu\circ\mu=0$. 

Let $x_1,\ldots,x_n$, $n\geq0$, be homogeneous elements  in $\Gamma(I,a^\ast
L)$. We need to prove that
  \begin{multline*}
\delta\big((a^\ast\lambda_n)(x_1,\ldots,x_n)\big) 
+ \sum_i(-1)^{|x_i|(1+|x_1|+ \ldots + |x_{i-1}|)}(a^\ast\tilde \lambda_n)(\nabla_{a'} x_i\,dt,x_1,\ldots,\hat x_i,\ldots,
x_n) \\
+(a^\ast\nabla\lambda_{n})(a'\, dt ,x_1,\ldots,x_n)=0\,.
\end{multline*}
Using the definitions of $\tilde \lambda$, and $\nabla\lambda$, and
canceling the factor $dt$, we are reduced to
\begin{multline*}
(-1)^{1+|x_1|+ \ldots + |x_n|}\nabla_{a'} \big(\lambda_n(x_1,\ldots,x_n)\big) \\
+ \sum_i(-1)^{|x_i|(|x_1|+ \ldots + |x_{i-1}|) + |x_1|+ \ldots + |x_n|}(a^\ast \lambda_n)(\nabla_{a'} x_i,x_1,\ldots,\hat x_i,\ldots,
x_n) \\
+(-1)^{|x_1|+ \ldots + |x_n|}(\nabla_{a'} \lambda_{n} )(x_1,\ldots,x_n)=0\, ,
\end{multline*}
which is true by the definition of $\nabla_{a'} \lambda_{n}$. 
\end{pfLooPathSp}

The diagram below describes all operations with 0 or 1 inputs: 
$$\xymatrix@C=4pc{
a'\in \Gamma(I,a^\ast TM)\,dt\rto^-{a^\ast \nabla\lambda_0} &
\Gamma(I,a^\ast L^1)\,dt\rto^-{a^\ast\tilde\lambda_1}
&
\Gamma(I,a^\ast L^2)\,dt\rto^-{a^\ast\tilde\lambda_1}
&\ldots\\
a^\ast\lambda_0\in \Gamma(I,a^\ast L^1)\urto^-\delta\rto_-{a^\ast\lambda_1}  &
\Gamma(I,a^\ast L^2)\urto^-\delta\rto_-{a^\ast\lambda_1}& \Gamma(I,a^\ast L^3)\urto^-\delta\rto_-{a^\ast\lambda_1} &\ldots  }
$$

\begin{rmk}
When the base manifold  $M$ is a point $\{*\}$,  an $L_\infty$-bundle is simply a curved $L_\infty [1]$-algebra $({\mathfrak g}, \lambda)$.
One can check that the  curved $L_\infty [1]$-algebra described in
Proposition \ref{pro:paris}
 can be identified with the one on
 $\Omega (I)\otimes {\mathfrak g}$ induced from $({\mathfrak g}, \lambda)$
by tensoring the cdga $(\Omega (I), d_{DR})$, where   $d_{DR}$ stands
for the de Rham differential.
We are thus reduced to the case studied by Fiorenza-Manetti  \cite{FioMan}
(they consider $L_\infty$-algebras without curvatures and differential
forms of polynomial functions). 
\end{rmk}

\subsection{The construction of $\P\M$}

We now choose an affine connection $\cntnM$ on $M$, and restrict to the case where $a\in P_gM$ is a short geodesic path. 
Write $\Gamma_\con(I,a^\ast TM)$ for the subspace of covariant constant sections of $a^\ast TM$ over $I$. Then $D(a) = a'\, dt\, \in
\Gamma_\con(I,a^\ast TM)\,dt$, and we have a curved
$L_\infty[1]$-subalgebra 
$$\widetilde P T\M[-1]|_a := \Gamma_\con(I,a^\ast TM)\,dt\oplus\Gamma(I,a^\ast L)\,dt\oplus \Gamma(I,a^\ast L)$$ 
in $\Gamma(I,a^\ast TM)\,dt\oplus\Gamma(I,a^\ast L)\,dt\oplus \Gamma(I,a^\ast L)$.

Following the Fiorenza-Manetti method  \cite{FioMan}, we introduce the derived path space $\P\M$ by contraction and transfer. Define 
\begin{align*}
\eta: \Gamma(I,a^\ast L^k)\,dt&\longrightarrow \Gamma(I,a^\ast L^k)\\
\alpha(t)\,dt&\longmapsto(-1)^{k} \Big(\int_0^t\alpha(u)\,du-t\int_0^1\alpha(u)\,du\,\Big).
\end{align*}
The connection $\cntnL$
trivializes the vector bundle $a^\ast L$ over the one-dimensional
manifold $I$,
i.e.\ we have a canonical identification $a^\ast L\cong I\times V$, where 
$V=\Gamma_\con(I,a^\ast L)$. This identifies $\Gamma(I,a^\ast L)$ with $C^\infty(I,V)$. Then $\int \alpha(u)\,du$ is the integral of
the vector valued function $\alpha$ defined on the open interval $I$. 

We consider $\eta$ as a linear endomorphism of degree $-1$ of the graded vector space $\widetilde P T\M[-1]|_a$. 
 We check that
\begin{items}
\item $\eta^2=0$,
\item $\eta \delta \eta=\eta$. 
\end{items}
Therefore,  $\delta\eta$ and $\eta\delta$ are orthogonal idempotents in $\widetilde P T\M[-1]|_a$, and hence induce a decomposition: 
$$\widetilde P T\M[-1]|_a =H\oplus \im \delta\eta\oplus \im \eta\delta\,,\qquad H=\ker
\delta\eta\cap \ker \eta \delta\,.$$ 
The differential $\delta$ preserves $H$.  The  inclusion $\iota:H\to
\widetilde P T\M[-1]|_a$, and the projection 
$$\pi=1-[\delta,\eta]:\widetilde P T\M[-1]|_a \to H$$ 
set up a homotopy 
equivalence between $(H,\delta)$ and $(\widetilde P T\M[-1]|_a, \delta)$.

The transfer theorem for curved $L_\infty[1]$-algebras, Proposition~\ref{transfertheorem}, gives rise to a
curved $L_\infty[1]$-structure on $(H,\delta)$, and a morphism of curved
$L_\infty[1]$-algebras from $(H,\delta)$ to $(\widetilde P T\M[-1]|_a ,\delta)$.  To apply the transfer theorem, we use the filtration given by  $F_k=\Gamma(I,a^\ast L^{\geq k}\oplus L^{\geq k}\,dt)$, for $k\geq1$.

The kernel of $\delta\eta$ on $\Gamma(I,a^\ast L)\,dt$ is the space of
covariant constant sections  $\Gamma_\con(I,a^\ast L)\,dt$. The kernel of $\eta\delta$ on $\Gamma(I,a^\ast L)$ is the
space of {\em linear }sections, denoted $\Gamma_\lin(I,a^\ast
L)$. These are the maps $s:I\to V$ which interpolate linearly between $s(0)$
and $s(1)$, i.e.\ which satisfy the equation
$$s(t)=(1-t)\,s(0)+t\,s(1)\,,\qquad\text{for all $t\in I$\,.}$$

The projection $\pi_\lin:\Gamma(I,a^\ast L)\to \Gamma_\lin(I,a^\ast L)$ maps the path
$\alpha$ to the linear path with the same start and end point:
\begin{equation}\label{eq:piLin}
\pi_\lin(\alpha)(t)=(1-t)\alpha(0)+t\,\alpha(1)\,.
\end{equation}
The projection $\pi_\con:\Gamma(I,a^\ast L)\,dt\to
\Gamma_\con(I,a^\ast L)\, dt$ maps the
path $\alpha\, dt$ to its integral:
\begin{equation}\label{eq:piCon}
\pi_\con(\alpha\, dt)=\Big( \int_0^1\alpha( u )\,du\,\Big) \,  dt \, .
\end{equation}

Let $\P\M$ be a family of curved $L_\infty[1]$-algebras over $P_gM$, whose fiber over $a \in P_gM$ is given by the transfer theorem (Proposition~\ref{transfertheorem}): 
\begin{equation}\label{eq:FibDerPathSp}
\P\M|_a = \big(\Gamma_\con(I,a^\ast TM) \, dt  \oplus \Gamma_\con(I, a^\ast L)\,dt \oplus \Gamma_\lin(I,a^\ast L), \, \delta + \nu \big).
\end{equation}

\begin{prop}
The triple $(P_gM, \P\M, \delta + \nu)$ is an $L_\infty$-bundle.
\end{prop}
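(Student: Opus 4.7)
The plan is to verify the three defining conditions of an $L_\infty$-bundle: that $P_\con(TM\oplus L)\,dt\oplus P_\lin L$ is a finite-rank graded vector bundle over $P_gM$ concentrated in positive degrees, that the transferred operations $\nu=(\nu_k)_{k\geq 0}$ are smooth fiberwise multilinear maps of degree $+1$, and that $(L,\delta+\nu)$ restricts on each fiber to a curved $L_\infty[1]$-algebra.

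First, the underlying graded vector bundle structure is already in hand: by the identifications made in Section~\ref{sec:ShortGeodesic} (and recorded in Remark~\ref{rmk:DerPathSpAsVB}), $P_\con(TM\oplus L)\,dt$ is the smooth vector bundle $\ev_0^*(TM\oplus L)\,dt$ and $P_\lin L$ is the smooth vector bundle $\ev_0^*L\oplus \ev_1^*L$, both over $P_gM$. Their direct sum is concentrated in degrees $1,\dots,n+1$ (where $n$ is the amplitude of $\M$). The fiberwise Maurer-Cartan condition is immediate: Proposition~\ref{pro:paris} furnishes a curved $L_\infty[1]$-structure on the ambient fiber $\Gamma(I,a^*T\M[-1])$ that is nilpotent with respect to the filtration $F_k=\Gamma(I,a^*L^{\geq k}\oplus L^{\geq k}\,dt)$; Proposition~\ref{transfertheorem} then produces the transferred curved $L_\infty[1]$-structure $\delta+\nu$ on the harmonic subspace $\P\M|_a$, and the tree sums are finite because of the positive-degree boundedness.

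What remains, and is the main content of the proof, is smoothness of each $\nu_k$ as a fibered multilinear map over $P_gM$. By the tree formula of Remark~\ref{rmk:TreeFormula}, $\nu_k$ is a finite sum of compositions whose building blocks are the inclusion $\iota$, the projection $\pi=\pi_\con\oplus\pi_\lin$ defined by \eqref{eq:piCon} and \eqref{eq:piLin}, the contraction $\eta$, and the pulled-back operations $a^*\mu_j$ coming from $\mu=\lambda+\tilde\lambda+\nabla\lambda$. I will check that each of these depends smoothly on the path variable $a\in P_gM$:
\begin{items}
\item The parallel-transport trivialization $a^*L\cong I\times \Gamma_\con(I,a^*L)$ varies smoothly with $a$; in this trivialization $\iota$ (inclusion of constants and of affine-linear sections) and $\pi$ (integration, respectively evaluation at $0$ and $1$) are smooth-parameter-dependent linear maps.
\item The contraction $\eta$ is the integral operator written explicitly above; smooth dependence on the parameter $a$ follows from the standard fact that parameter-dependent Riemann integrals of smooth integrands are smooth in the parameter.
\item The multilinear operations $a^*\mu_j$ are the pullbacks of the smooth operations $\mu_j$ from Proposition~\ref{pro:TM1} along the smooth evaluation $P_gM\times I\to M$, hence smooth in both $a$ and $t$.
\end{items}
Composing finitely many such building blocks and summing over the (finitely many non-vanishing) decorated trees yields smooth multilinear fibered maps $\nu_k$ of degree $+1$ on the vector bundle $P_\con(TM\oplus L)\,dt\oplus P_\lin L\to P_gM$.

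The only delicate point is to confirm that each intermediate tree contraction, which a priori produces an element of the infinite-dimensional ambient bundle $\widetilde PT\M[-1]$, remains smooth in $a$ before the final $\pi$ lands it back in the finite-dimensional bundle. This is handled by observing that the interior edges carrying $\eta$ absorb their inputs into sections of $a^*L$ (or $a^*L\,dt$) depending smoothly on $a$, while the $\mu_j$-nodes act pointwise in $t$; hence the tree contraction is a finite composition of smooth operations on smooth families, and the final $\pi$-projection is smooth by the above. Thus $(P_gM,P_\con(TM\oplus L)\,dt\oplus P_\lin L,\delta+\nu)$ is an $L_\infty$-bundle, completing the proof.
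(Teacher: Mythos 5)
Your proposal is correct and follows essentially the same route as the paper: identify $P_\con(TM\oplus L)\,dt$ and $P_\lin L$ as smooth vector bundles over $P_gM$ via $\ev_0^\ast$ and $\ev_0^\ast\oplus\ev_1^\ast$, apply the fiberwise transfer theorem, and deduce smoothness of the transferred operations from the explicit finite tree formulas whose ingredients ($D$, $\lambda$, $\delta$, $\eta$, $\iota$, $\pi$) all depend smoothly on the path $a$. Your write-up simply spells out the smoothness of each building block in more detail than the paper's brief argument.
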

\begin{pf}
The vector spaces $\Gamma_\con(I,a^\ast TM)\,dt$, as $a$ varies over $P_gM$, combine into a vector bundle $P_\con TM$ over $P_gM$. Similarly, the vector spaces $\Gamma_\con(I,a^\ast L)\,dt$ and $\Gamma_\lin(I,a^\ast L)$ combine 
into vector bundles $P_\con L\,dt$ and $P_\lin L$ over $P_gM$, respectively. Note that, as vector bundles over $P_gM$, 
\begin{align}
P_\con L & \cong \ev_0^\ast L, \label{eq:PconL} \\
P_\lin L & \cong \ev_0^\ast L\oplus \ev_1^\ast L. \label{eq:PlinL}
\end{align}
See Definition~\ref{def:CovConstPath} and Definition~\ref{def:CovLinPath}.

We have constructed curved $L_\infty[1]$-structures in the fibers of $(P_\con TM\oplus P_\con L)\,dt\oplus P_\lin L$ over $P_gM$ by the transfer theorem.  These combine into a bundle of smooth
curved $L_\infty[1]$-algebras, because all ingredients,
 $D$, $\lambda$, $\delta$ and $\eta$ are differentiable with respect to the base manifold $P_gM$, and all transferred operations are given by explicit formulas involving finite sums over rooted trees (see Remark~\ref{rmk:TreeFormula}).   
\end{pf}

As a consequence of the transfer theorem, we have the following lemma.

\begin{lem}
For each $a \in P_gM$,there exist $L_\infty[1]$-morphisms
\begin{eqnarray} 
&&\phi: \P\M|_a \to   \widetilde P T\M[-1]|_a \qquad \label{eq:KIAS1}\\
&&\tilde{\pi}: \widetilde P T\M[-1]|_a \to \P\M|_a \qquad \label{eq:KIAS2}
\end{eqnarray}
satisfying the recursive equation \eqref{recu} and $\tilde\pi \bullet \phi = \id_{\P\M|_a}$.
\end{lem}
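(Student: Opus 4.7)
The plan is to apply the Transfer Theorem (Proposition~\ref{transfertheorem}) directly; the entire setup has already been assembled in the paragraphs preceding the lemma. The dictionary with the hypotheses of that theorem is as follows. The ambient complex $(L,\delta)$ is taken to be $(\widetilde P T\M[-1]|_a, \delta)$, with $\delta$ the covariant derivative along $a$. The curved $L_\infty[1]$-structure is $a'\,dt + a^\ast\mu$, which Proposition~\ref{pro:paris} showed satisfies the curved Maurer-Cartan equation. The contraction is the operator $\eta$ introduced above, for which both $\eta^2 = 0$ and $\eta\delta\eta = \eta$ have been verified. Finally, the distinguished subcomplex $H = \ker\delta\eta \cap \ker\eta\delta$ is precisely the fiber $\P\M|_a$ appearing in \eqref{eq:FibDerPathSp}.

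The only remaining hypothesis to confirm is nilpotency of the curved $L_\infty[1]$-structure with respect to a suitable filtration. I would endow $\widetilde P T\M[-1]|_a$ with the natural filtration $F_k = \bigoplus_{i \geq k}(\widetilde P T\M[-1]|_a)^i$. Since the underlying graded bundle $TM\,dt \oplus L\,dt \oplus L$ is concentrated in degrees $1$ through $n+1$, we have $F_{n+2} = 0$. Each operation $\mu_k$ for $k \geq 1$ has total degree $+1$, so on inputs drawn from $F_{k_1}, \ldots, F_{k_n}$ its non-zero values land in $F_{k_1 + \cdots + k_n + 1}$, which is exactly the required nilpotency. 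No constraint is imposed on the curvature component $a'\,dt + a^\ast\lambda_0$, in accordance with the convention recorded after the statement of Proposition~\ref{transfertheorem}. Compatibility of $\eta$ with $F$ is immediate from the definitions, as $\eta$ preserves the degree-wise decomposition of sections.

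With nilpotency in hand, Proposition~\ref{transfertheorem} supplies the unique $\phi \in \Sym^0(\P\M|_a, \widetilde P T\M[-1]|_a)$ solving the recursive equation~\eqref{recu}, together with the transferred curved $L_\infty[1]$-structure $\delta + \nu$ on $\P\M|_a$ given by $\nu = \pi(a'\,dt + a^\ast\mu)\bullet\phi$, and the morphism $\tilde\pi: \widetilde P T\M[-1]|_a \to \P\M|_a$ satisfying $\tilde\pi \bullet \phi = \id_{\P\M|_a}$. Consequently there is no substantive obstacle at this step: the genuine work was carried out in Proposition~\ref{pro:paris} and in the verification of the contraction identities for $\eta$, and the present lemma is simply a repackaging of those ingredients through the general transfer machinery.
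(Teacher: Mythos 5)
Your overall route is the same as the paper's: the lemma is meant to be an immediate application of Proposition~\ref{transfertheorem} to the curved $L_\infty[1]$-algebra of Proposition~\ref{pro:paris} with the contraction $\eta$, and the identification $H=\ker\delta\eta\cap\ker\eta\delta=\P\M|_a$. However, the one hypothesis you actually set out to verify is verified with the wrong filtration, and the verification as written is incorrect. The operator $\eta$ has degree $-1$: it sends $\Gamma(I,a^\ast L^k)\,dt$ (degree $k+1$) to $\Gamma(I,a^\ast L^k)$ (degree $k$), so it does \emph{not} ``preserve the degree-wise decomposition,'' and it does not preserve the natural filtration $F_k=\bigoplus_{i\geq k}$ --- it drops it by one. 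This matters because the filtration is precisely what makes the recursion \eqref{recu} well posed: in the tree expansion, each node raises the natural filtration by exactly $1$ while each internal edge ($\eta$) lowers it by $1$, so e.g.\ the composite $\eta\,(a^\ast\tilde\lambda_1+a^\ast\nabla\lambda_0)$ preserves natural filtration degree, and the natural filtration alone does not exclude arbitrarily long chains of unary nodes. (Such chains do in fact vanish, but for a structural reason --- outputs of $\eta$ carry no $dt$, and $\tilde\lambda$, $\nabla\lambda$ need a $dt$-input --- which is exactly the kind of extra argument your proof omits; compare the computation $\big(\eta(a^\ast\tilde\lambda_1+a^\ast\nabla\lambda_0)\big)^2=0$ used in Lemma~\ref{lem:ForSubalg}.)

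The paper avoids this by choosing the filtration $F_k=\Gamma\big(I,a^\ast L^{\geq k}\oplus L^{\geq k}\,dt\big)$ for $k\geq 1$, with $F_0$ the whole space (so the $\Gamma_\con(I,a^\ast TM)\,dt$ summand sits in filtration degree $0$ and an element $\alpha\,dt$ with $\alpha\in L^k$ has filtration degree $k$, not $k+1$). This filtration vanishes for $k>n$, is preserved by both $\delta$ and $\eta$, and every operation $a^\ast\lambda_m$, $a^\ast\tilde\lambda_m$, $a^\ast(\nabla\lambda)_m$ with $m\geq 1$ raises it by at least one, while the curvature $a'\,dt+a^\ast\lambda_0$ is exempt, as allowed. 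With this filtration all hypotheses of Proposition~\ref{transfertheorem} hold and the remainder of your argument (existence and uniqueness of $\phi$ solving \eqref{recu}, the transferred structure $\delta+\nu$ on \eqref{eq:FibDerPathSp}, and $\tilde\pi$ with $\tilde\pi\bullet\phi=\id_{\P\M|_a}$) goes through verbatim; so the gap is localized and fixable, but as stated the nilpotency/compatibility step is not established.
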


\subsection{Some properties of the $L_\infty[1]$-operations on $\P\M|_a$}

We compute some $L_\infty[1]$-operations in \eqref{eq:FibDerPathSp} which will be used in the future.

First, we apply Proposition~\ref{transfertheorem} to compute the zeroth and first
$L_\infty[1]$-operations in \eqref{eq:FibDerPathSp}. This computation is needed in the proof of Proposition~\ref{prop:WeakEqui}.

Recall that we transfer the $L_\infty[1]$-structure $\delta + a'\, dt \, + a^\ast\lambda +  a^\ast \tilde\lambda + a^\ast \cntnL\lambda$ on $\widetilde P T\M[-1]|_a$ to an $L_\infty[1]$-structure $\delta + \nu$ on $\Gamma_\con(I,a^\ast TM) \, dt  \oplus \Gamma_\con(I, a^\ast L)\,dt \oplus \Gamma_\lin(I,a^\ast L)$. The transfer is via the homotopy operator $\eta$, and the associated projection is $\pi = \pi_\con + \pi_\lin$ as defined in \eqref{eq:piLin} and \eqref{eq:piCon}. 
All the $L_\infty[1]$-operations and morphisms are considered as maps 
$\Sym E \to F$, where $E$ and $F$ denote the source and target spaces of the $L_\infty[1]$-operations/morphisms, respectively. 
The compositions are denoted by $\bullet$ or $\circ$ 
as defined in Section~\ref{linfty}.

The transferred curvature is 
\begin{equation}\label{eq:TransferredCurvature}
\pi(\mu_0)=a'\, dt \, +\pi_\lin (a^\ast \lambda_0)\,.
\end{equation}
Here the second term is the linearization of the curvature: the linear path
from the starting point of the curvature to the end point of the
curvature along the geodesic path $a$.

Let $\phi$ be the $L_\infty[1]$-morphism \eqref{eq:KIAS1}
 induced from the homotopy transfer, 
and 
$$\iota: 
\P\M|_a \to   \widetilde P T\M[-1]|_a$$  
the set-theoretical inclusion map. Since the homotopy operator
 $\eta$ vanishes on $\Gamma(I,a^\ast L)$ and $\Gamma(I,a^\ast TM )dt$, it follows
that $\eta(a'\, dt \, + a^\ast\lambda) \bullet \phi =0$. Hence the homotopy transfer equation \eqref{recu} reduces to 
\begin{equation}\label{eq:TransInc}
\phi   
= \iota - \eta \, (a^\ast \tilde\lambda) \bullet \phi - \eta\, (a^\ast \cntnL\lambda)\bullet \phi.
\end{equation}
The transferred $L_\infty[1]$-structure is $\delta+ \nu$, where 
\begin{equation}\label{eq:TransLoo}
\nu = \pi\, (a'\, dt \, + a^\ast \lambda + a^\ast\tilde\lambda + a^\ast\cntnL\lambda )\bullet \phi.
\end{equation}

\begin{lem}
\label{lem:lambda}
$
\pi(a^\ast \lambda)\bullet \phi 
= \pi(a^\ast \lambda)\bullet \iota.
$ 
\end{lem}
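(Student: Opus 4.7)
The plan is to reduce the identity to a pointwise check at $t=0$ and $t=1$. First I would observe that each operation $a^\ast\lambda_k$ takes values in $\Gamma(I, a^\ast L)$, without any $dt$-factor; consequently $\pi = \pi_\con + \pi_\lin$ collapses to $\pi_\lin$ on both sides of the claimed equality. By the explicit formula \eqref{eq:piLin}, $\pi_\lin(\alpha)$ depends only on the endpoint values $\alpha(0)$ and $\alpha(1)$. Hence it suffices to show that $(a^\ast\lambda)\bullet\phi$ and $(a^\ast\lambda)\bullet\iota$ produce the same elements of $\Gamma(I, a^\ast L)$ when evaluated at $t=0$ and at $t=1$.

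The key input is the endpoint vanishing of $\eta$: direct inspection of the defining formula yields $\eta(\alpha\,dt)(0) = 0 = \eta(\alpha\,dt)(1)$ for every $\alpha\,dt \in \Gamma(I,a^\ast L)\,dt$. Feeding this into \eqref{eq:TransInc}, I would establish by induction on the arity $n$ two statements. First, the $L$-component of $\phi_1(\xi)$ coincides with the $L$-component of $\iota(\xi) = \xi$ at $t=0,1$, since the two corrections $\eta(a^\ast\tilde\lambda)\bullet\phi$ and $\eta(a^\ast\cntnL\lambda)\bullet\phi$ lie in the image of $\eta$. Second, for $n \geq 2$ the operation $\phi_n$ has image inside the $L$-component of $\widetilde P T\M[-1]|_a$ and vanishes at $t=0,1$, because $\iota_n = 0$ and every term in $\phi_n$ sits inside $\eta(\cdots)$.

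Finally, since $a^\ast\lambda_k$ acts pointwise on $\Gamma(I, a^\ast L)^{\otimes k}$ and only detects the $L$-component of its inputs, the endpoint values of $\big((a^\ast\lambda)\bullet\phi\big)_n(\xi_1, \ldots, \xi_n)$ are determined by the endpoint values of the various $\phi_{n_j}$'s appearing in the defining sum over compositions $n_1 + \cdots + n_k = n$. Terms with some $n_j \geq 2$ vanish at $t = 0,1$ by the second statement; the remaining terms, with all $n_j = 1$, reduce at the endpoints, by the first statement, to $a^\ast\lambda_n$ applied to the $L$-components of the $\xi_i$'s. Since $\iota_m = 0$ for $m \geq 2$, these are precisely the terms of $\big((a^\ast\lambda)\bullet\iota\big)_n$. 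Applying $\pi_\lin$ therefore yields the desired equality.

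The hard part is purely organizational: tracking which of the three summands of $\widetilde P T\M[-1]|_a$ each correction in \eqref{eq:TransInc} inhabits, so as to be certain that every discrepancy between $\phi$ and $\iota$ is produced by an application of $\eta$ and hence vanishes at the endpoints.
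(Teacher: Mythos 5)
Your proof is correct and follows essentially the same route as the paper's: both rest on the endpoint vanishing of $\eta$-outputs, the pointwise multilinearity of $a^\ast\lambda$, and the fact that $\pi_\lin$ depends only on the values at $t=0$ and $t=1$, applied to the recursion \eqref{eq:TransInc}. The paper merely skips your inductive bookkeeping by observing directly that every term of $(a^\ast \lambda)\bullet\phi$ beyond $(a^\ast\lambda)\bullet\iota$ has an $\eta$-output among its inputs and is therefore annihilated by $\pi$.
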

\begin{pf}
The output of $\eta$ consists of sections $\alpha\in\Gamma(I,a^\ast L)$
 such that $\alpha(0)=\alpha(1)=0$.
For any $n\geq 1$, by multi-linearity of $\lambda_n$,
  $(a^\ast \lambda_n)\big( \ldots, \eta(\text{sth}), \ldots\big)(t)=0$, for $t=0$ and
$t=1$.
  As the projection onto $P_\lin L$ is the linear interpolation, 
it follows that $\pi(a^\ast \lambda)\big( \ldots, \eta(\text{sth}), \ldots\big)=0$.
 Therefore, it follows from Equation~\eqref{eq:TransInc} and the definition of $\bullet$ that 
\begin{align*}
\pi(a^\ast \lambda)\bullet \phi & = \pi(a^\ast \lambda)\bullet \iota + \pi(a^\ast \lambda)\big( \ldots, \eta(\text{sth}), \ldots\big) \\
& = \pi(a^\ast \lambda)\bullet \iota. 
\end{align*}
\end{pf}

In particular, if we are given only one input, the recursive equation \eqref{eq:TransInc} reduces to 
\begin{equation}\label{eq:FirstMapInclusion}
\phi_1  = \iota - \eta \, (a^\ast \tilde\lambda_1 + a^\ast \cntnL\lambda_0) \phi_1.
\end{equation}
Therefore, we have 
\begin{align*}
\nu_1 & = \pi\, ( a^\ast \lambda_1) \phi_1 + \pi\, (a^\ast\tilde\lambda_1 + a^\ast\cntnL\lambda_0 ) \phi_1 \\
& = \pi\, ( a^\ast \lambda_1) \iota + \pi\, (a^\ast\tilde\lambda_1 + a^\ast\cntnL\lambda_0 ) \iota + \pi\, (a^\ast\tilde\lambda_1 + a^\ast\cntnL\lambda_0 ) \big(\eta(sth)\big) \\
& = \pi\, ( a^\ast \lambda_1) \iota + \pi\, (a^\ast\tilde\lambda_1 + a^\ast\cntnL\lambda_0 ) \iota \\
& = \pi_\lin \, a^\ast \lambda_1 + \pi_\con \, a^\ast\tilde\lambda_1 + \pi_\con \, a^\ast\cntnL\lambda_0. 
\end{align*}
Here we applied  Lemma \ref{lem:lambda} in  the second equality.

Consequently, we have the following 
\begin{prop}
Let $\delta + \nu$ be the $L_\infty[1]$-operations defined in \eqref{eq:FibDerPathSp}. 
The operations with $0$ and $1$ inputs are 
\begin{align*}
\nu_0 &= a'\, dt \, +\pi_\lin (a^\ast \lambda_0)\,, \\
\nu_1 & =\pi_\lin \, a^\ast \lambda_1 + \pi_\con \, a^\ast\tilde\lambda_1 + \pi_\con \, a^\ast\cntnL\lambda_0\, ,
\end{align*}
which are summarized in the following diagram:
\begin{equation}\label{eq:01inputs}
\begin{split}
\xymatrix@C=4pc{
a'\, dt \in  \Gamma_\con(I,a^\ast TM)\, dt\rto^-{\pi_\con a^\ast(\cntnL\lambda_0)} &
\Gamma_\con(I,a^\ast L^1)\,dt \rto^-{\pi_\con a^\ast\tilde\lambda_1 }
&\ldots\\
 \pi_\lin a^\ast\lambda_0\in
\Gamma_\lin(I,a^\ast L^1)\ar[ru]_-{\delta}\rto_-{\pi_\lin
  a^\ast\lambda_1} & 
\Gamma_\lin(I,a^\ast L^2)\ar[ru]_-{\delta}\rto_-{\pi_\lin a^\ast\lambda_1} & \ldots 
}
\end{split}
\end{equation}
\end{prop}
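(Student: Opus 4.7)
The proposition gathers the explicit expressions for the $0$- and $1$-input operations of the transferred structure $\delta + \nu$, and the plan is to extract these by carefully unpacking the transfer formulas of Proposition~\ref{transfertheorem} component by component with respect to the three summands $\Gamma_\con(I,a^\ast TM)\,dt \oplus \Gamma_\con(I,a^\ast L)\,dt \oplus \Gamma_\lin(I,a^\ast L)$.

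First I would handle $\nu_0$. The transferred curvature is $\pi(\mu_0)$, where $\mu_0 = a'\,dt + a^\ast\lambda_0$ is the curvature on $\widetilde P T\M[-1]|_a$ described in Proposition~\ref{pro:paris}. The term $a'\,dt$ already lies in $\Gamma_\con(I,a^\ast TM)\,dt$ and is fixed by $\pi_\con$, while $a^\ast\lambda_0\in\Gamma(I,a^\ast L^1)$ is sent by $\pi_\lin$ to its linear interpolation; this reproduces \eqref{eq:TransferredCurvature}.

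For $\nu_1$, the plan is to first derive a closed form for $\phi_1$ from the recursion \eqref{eq:TransInc}. Since $\phi_0=0$ (both summands of $\mu_0$ lie in the kernel of $\eta$, which is extended by zero outside $\Gamma(I,a^\ast L)\,dt$), the recursion for one input collapses to $\phi_1 = \iota - \eta\,\mu_1\,\phi_1$, with $\mu_1 = a^\ast\lambda_1 + a^\ast\tilde\lambda_1 + a^\ast\cntnL\lambda_0$. The decisive observation is that $\eta$ vanishes on the no-$dt$ summand $\Gamma(I,a^\ast L)$, while $a^\ast\tilde\lambda_1$ and $a^\ast\cntnL\lambda_0$ vanish on inputs with no $dt$. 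Hence the image of $\eta\,\mu_1$ is contained in $\Gamma(I,a^\ast L)$ on which $\mu_1 = a^\ast\lambda_1$, and iterating once more one obtains $\eta\mu_1\eta\mu_1 = 0$; a single iteration of the recursion therefore yields $\phi_1 = \iota - \eta(a^\ast\tilde\lambda_1 + a^\ast\cntnL\lambda_0)\,\iota$, which is exactly \eqref{eq:FirstMapInclusion}.

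With this in hand I would expand $\nu_1 = \pi\,\mu_1\,\phi_1 = \pi\,\mu_1\,\iota - \pi\,\mu_1\,\eta(a^\ast\tilde\lambda_1 + a^\ast\cntnL\lambda_0)\,\iota$. The second summand vanishes by the mechanism of Lemma~\ref{lem:lambda}: the output of $\eta$ consists of sections of $a^\ast L$ vanishing at $t=0$ and $t=1$, on such sections only $a^\ast\lambda_1$ contributes nontrivially to $\mu_1$, its image again vanishes at the endpoints so $\pi_\lin$ kills it, and $\pi_\con$ vanishes for trivial degree reasons since the output has no $dt$. The remaining term $\pi\,\mu_1\,\iota$ then splits cleanly according to source-target types: $a^\ast\lambda_1$ maps $L\to L$ and contributes $\pi_\lin\,a^\ast\lambda_1$; $a^\ast\tilde\lambda_1$ maps $L\,dt\to L\,dt$ and contributes $\pi_\con\,a^\ast\tilde\lambda_1$; and $a^\ast\cntnL\lambda_0$ maps $TM\,dt\to L\,dt$ and contributes $\pi_\con\,a^\ast\cntnL\lambda_0$. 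I expect the only delicate step to be the vanishing of the $\pi\mu_1\eta(\cdots)\iota$ correction, which ultimately rests on the endpoint-vanishing property of the image of $\eta$; the rest is a direct identification of components.
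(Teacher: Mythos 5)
Your proposal is correct and follows essentially the same route as the paper: $\nu_0=\pi(\mu_0)$ is read off directly, and $\nu_1=\pi\,\mu_1\,\phi_1$ is computed using the one-input recursion for $\phi_1$ together with the endpoint-vanishing of the image of $\eta$ (the paper's Lemma~\ref{lem:lambda}) to kill the correction term, then identifying components by source/target type. The only slight discrepancy is bookkeeping: your closed form $\phi_1=\iota-\eta(a^\ast\tilde\lambda_1+a^\ast\cntnL\lambda_0)\iota$ is not literally \eqref{eq:FirstMapInclusion} (which has $\phi_1$ on the right) but is equivalent to it since $\big(\eta(a^\ast\tilde\lambda_1+a^\ast\cntnL\lambda_0)\big)^2=0$, exactly as the paper notes in the proof of Lemma~\ref{lem:ForSubalg}.
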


Next lemma is also needed in our future discussion.

\begin{lem}\label{lem:ForSubalg}
For any $v_1, \ldots, v_n \in \Gamma_\lin(I,a^\ast L)$, we have 
\begin{items}
\item\label{term:TransIncProp}
$\phi_1(v_1) = \iota(v_1)$, $\phi_n(v_1, \ldots, v_n) = 0, \; \forall \, n \geq 2$;
\item
$\nu_n(v_1, \ldots, v_n) = \pi_\lin \big((a^\ast\lambda_n)(v_1, \ldots, v_n)\big),  \; \forall \, n \geq 1$.
\end{items}
\end{lem}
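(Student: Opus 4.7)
The approach rests on two type-theoretic observations about the decomposition $\widetilde{P}T\M[-1]|_a = \Gamma_\con(I, a^*TM)\,dt \oplus \Gamma(I, a^*L)\,dt \oplus \Gamma(I, a^*L)$. First, inspecting the formulas of Proposition~\ref{pro:TM1}, $\tilde\lambda_k$ requires at least one input in the $L\,dt$ summand and $(\cntnL\lambda)_{k+1}$ requires at least one input in the $TM\,dt$ summand; hence both operations vanish identically on tuples of elements drawn from the pure-$L$ summand $\Gamma(I, a^*L)$. Second, by construction the contraction $\eta$ vanishes outside $\Gamma(I, a^*L)\,dt$ and takes values in $\Gamma(I, a^*L)$. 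Since each $v_i \in \Gamma_\lin(I, a^*L)$ lies in the pure-$L$ summand, everything will collapse to $\lambda$ alone.

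For part (i) I would use the rooted-tree formula for $\phi$ described in Remark~\ref{rmk:TreeFormula}. The only tree without nodes (a single leaf) contributes $\iota(v_1)$ when $n=1$, so it suffices to show every tree with at least one node evaluates to zero on the $v_i$'s. Induct on the height of the tree: at any node one encounters, its incoming edges are either leaves decorated by $\iota(v_j) \in \Gamma(I, a^*L)$ or results of $\eta$ applied to a subtree, again in $\Gamma(I, a^*L)$ by observation (b). Observation (a) then reduces the node's decoration $\mu_k$ to $\lambda_k$ alone, whose output again lies in $\Gamma(I, a^*L)$. The outgoing edge of this node (internal, or the root edge in the case of the root node) carries another $\eta$, which vanishes on $\Gamma(I, a^*L)$ by (b). Hence every tree with at least one node is killed, yielding $\phi_1(v_1) = \iota(v_1)$ and $\phi_n(v_1, \ldots, v_n) = 0$ for $n \geq 2$.

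For part (ii), I would combine (i) with the identity $\nu = \pi\,\mu \bullet \phi$ and the explicit formula for $\bullet$ from Section~\ref{linfty}. In the expansion of $(\mu \bullet \phi)_n(v_1, \ldots, v_n)$, each summand contains factors $\phi_{n_i}$ applied to disjoint subtuples; by (i) every factor with $n_i \geq 2$ is zero, so only the partition $k = n$, $n_1 = \cdots = n_n = 1$ survives. The surviving term is $\mu_n(\iota v_1, \ldots, \iota v_n)$, which by (a) equals $\lambda_n(v_1, \ldots, v_n) \in \Gamma(I, a^*L)$. Since $\pi$ restricted to $\Gamma(I, a^*L)$ coincides with $\pi_\lin$, one concludes $\nu_n(v_1, \ldots, v_n) = \pi_\lin\bigl((a^*\lambda_n)(v_1, \ldots, v_n)\bigr)$.

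The whole argument is bookkeeping on the direct-sum decomposition and should present no serious obstacle. The only point deserving care is verifying the induction on tree height: one must confirm that every node, inductively, really does see all of its arguments inside $\Gamma(I, a^*L)$, and that the root edge (not just internal edges) also carries an $\eta$ per Remark~\ref{rmk:TreeFormula}. The combinatorial coefficients and signs in the definition of $\bullet$ contribute nothing nontrivial, since the single surviving partition is symmetric in the inputs.
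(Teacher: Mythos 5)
Your proposal is correct and follows essentially the same route as the paper: the paper proves (i) by induction on the number of inputs directly from the recursion \eqref{eq:TransInc}, using precisely your observation that $a^\ast\tilde\lambda + a^\ast\cntnL\lambda$ vanishes on tuples of pure-$L$ (in particular $\Gamma_\lin(I,a^\ast L)$) inputs, and deduces (ii) from \eqref{eq:TransLoo} exactly as you do, so your tree-height induction is just the unrolled form of that recursion. The only case you pass over silently --- a node with no incoming edges, decorated by the curvature $a'\,dt + a^\ast\lambda_0$, whose output is not in $\Gamma(I,a^\ast L)$ --- is harmless, since the $\eta$ on its outgoing edge kills it and the value fed to its parent again lies in $\Gamma(I,a^\ast L)$.
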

\begin{pf}
By \eqref{eq:TransLoo}, the second statement follows from the first one and the fact that $a^\ast \tilde\lambda + a^\ast \cntnL\lambda$ vanishes on $\Gamma_\lin(I,a^\ast L)$. 
To prove the first assertion, we  will show the following claim by induction on $n$. 

{\it Claim.} The image of $\Gamma_\lin(I,a^\ast L)^{\otimes n}$ under $\phi_n$ is still in $\Gamma_\lin(I,a^\ast L)$. \\
In fact, since the operation $a^\ast \tilde\lambda + a^\ast \cntnL\lambda$ vanishes if all the inputs are elements in $\Gamma_\lin(I,a^\ast L)$, Equation \eqref{eq:TransInc}, together with the claim, imply that $\phi_n(v_1, \ldots, v_n) = 0$,  for any $n \geq 2$ and any $v_1, \ldots, v_n \in \Gamma_\lin(I,a^\ast L)$.

We start with computing $\phi_1$. Since the operator $\big( \eta (a^\ast \tilde\lambda_1 + a^\ast \cntnL\lambda_0)\big)^2$ vanishes, it follows from \eqref{eq:FirstMapInclusion} that  
\begin{align*}
\phi_1 & = \iota - \eta \, (a^\ast \tilde\lambda_1 + a^\ast \cntnL\lambda_0) \phi_1 \\
& = \iota - \eta \, (a^\ast \tilde\lambda_1 + a^\ast \cntnL\lambda_0) \big(\iota - \eta \, (a^\ast \tilde\lambda_1 + a^\ast \cntnL\lambda_0) \phi_1 \big) \\
& = \iota - \eta \, (a^\ast \tilde\lambda_1 + a^\ast \cntnL\lambda_0) \iota + \big( \eta (a^\ast \tilde\lambda_1 + a^\ast \cntnL\lambda_0)\big)^2\phi_1 \\
 & = \iota - \eta (a^\ast \tilde\lambda_1 + a^\ast \cntnL\lambda_0) \iota.
\end{align*}
Thus, the restriction of $\phi_1$ on $\Gamma_\lin(I,a^\ast L)$ coincides with $\iota$, which verifies the first part of  \ref{term:TransIncProp}.  Now assume the claim is true for $n$. Since $a^\ast \tilde\lambda + a^\ast \cntnL\lambda$ vanishes on $\Gamma_\lin(I,a^\ast L)$, it follows from \eqref{eq:TransInc} that the claim is also true for $n+1$.  
\end{pf}

\subsection{Proof of Theorem~A}

Let $\M=(M,L,\lambda)$ be an $L_\infty$-bundle.
Fix an affine connection $\cntnM$ on $M$ and a linear connection $\cntnL$ on $L$.  Consider the derived path space
$$\P\M=\big(P_g M, P_\con (TM\oplus  L)\,dt\oplus P_\lin L, \delta +\nu\big).$$
Let us denote the embedding of constant paths by $\iota:\M\to \P\M$.
By Lemma~\ref{lem:ForSubalg}, the map $\iota$ is an embedding of $L_\infty$-bundles.

\begin{prop}\label{prop:WeakEqui}
The morphism $\iota: \M\to \P\M$ of constant paths is a weak equivalence.
\end{prop}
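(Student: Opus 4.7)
The plan is to verify both defining conditions of a weak equivalence: (i) $\iota$ gives a bijection $\pi_0(\M) \to \pi_0(\P\M)$, and (ii) $\iota$ is \'etale at every classical point. Part (i) follows immediately from the explicit formula $\nu_0(a) = a'\, dt + \pi_\lin(a^\ast\lambda_0)$ in Equation~\eqref{eq:TransferredCurvature}: the two summands $\Gamma_\con(I,a^\ast TM)\,dt$ and $\Gamma_\lin(I,a^\ast L^1)$ of the fiber are linearly independent, so $\nu_0(a) = 0$ forces $a' = 0$ (equivalently, the geodesic path $a$ is constant, say $a = a_P$) and $\pi_\lin(a_P^\ast\lambda_0) = \lambda_0(P) = 0$, i.e.\ $P \in \pi_0(\M)$. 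Hence the classical points of $\P\M$ are exactly the constant paths at classical points of $\M$, matching the image of $\iota$.

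For (ii), fix $P \in \pi_0(\M)$ and write $a_P = \iota(P)$. Using Proposition~\ref{prop:PgM} to identify $P_gM$ with a neighborhood of the zero section of $TM$, one has $T(P_gM)|_{a_P} \cong TM|_P \oplus TM|_P$, and $\iota_\ast$ in degree~$0$ is the inclusion $v \mapsto (v,0)$. In degree $k \geq 1$, the fiber of $P_\con(TM \oplus L)\,dt \oplus P_\lin L$ at $a_P$ splits as $L^{k-1}|_P\,dt \oplus L^k|_P \oplus L^k|_P$ (with $TM|_P\,dt$ replacing the first summand when $k = 1$, and with the last two $L^k|_P$'s arising from $\Gamma_\lin(I, a_P^\ast L^k) \cong L^k|_P \oplus L^k|_P$ via endpoints), and $\iota_\ast$ embeds $L^k|_P$ diagonally as $l \mapsto (l, l)$. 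Evaluating a first-order geodesic variation $a_\epsilon(t) \approx P + \epsilon(v + tw)$ yields
\[
D_{a_P}\nu_0(v, w) \;=\; w\,dt \;+\; \bigl(D_P\lambda_0(v) + t\,D_P\lambda_0(w)\bigr);
\]
combining this with Equations~\eqref{eq:01inputs} and the description of $\delta$ on a linear section $\beta \in \Gamma_\lin(I, a_P^\ast L^k)$ as the endpoint difference $\delta(\beta) = (-1)^k(\beta(1) - \beta(0))\,dt$ (the pullback connection $a_P^\ast \cntnL$ on $I \times L|_P$ is trivial, so the covariant derivative along $\partial_t$ is ordinary differentiation, nonzero on linear sections despite $a_P' = 0$) gives a complete description of the differentials on the tangent complex of $\P\M$ at $a_P$.

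Passing to the cokernel of $\iota_\ast$ produces the complex
\[
C^\bullet:\quad TM|_P \;\xto{d^0}\; TM|_P\,dt \oplus L^1|_P \;\xto{d^1}\; L^1|_P\,dt \oplus L^2|_P \;\to\; \cdots
\]
with $d^0(w) = (w\,dt,\, D_P\lambda_0(w))$ and, for $k \geq 1$,
\[
d^k(l'\,dt,\, l) \;=\; \bigl((\lambda_1(l') + (-1)^k l)\,dt,\; \lambda_1(l)\bigr)
\]
(reading $\lambda_1 = D_P\lambda_0$ on $TM|_P$ when $k = 1$). The off-diagonal entry $\pm\id$ between the two columns exhibits $C^\bullet$ as a mapping cone of an isomorphism, hence acyclic; concretely, setting
\[
h^1(u\,dt,\, l) := u \in TM|_P, \qquad h^k(l'\,dt,\, l) := \bigl(0,\;(-1)^{k+1} l'\bigr) \quad (k \geq 2)
\]
gives a contracting homotopy $h\,d + d\,h = \id_{C^\bullet}$, which one checks directly using $\lambda_1 \circ D_P\lambda_0 = 0 = \lambda_1^2$ at $P$. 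This proves $\iota_\ast$ is a quasi-isomorphism of tangent complexes at every classical point, completing the proof. The main obstacle is the sign bookkeeping in $\delta + \nu_1$ at $a_P$: one must use $\delta(\beta) = (-1)^k(\beta(1)-\beta(0))\,dt$ (rather than the naively-zero $\nabla_{a_P'}\beta$) both to ensure $(\delta + \nu_1)^2 = 0$ and to arrange $C^\bullet$ into a clean cone-of-identity form in which $h$ works as stated.
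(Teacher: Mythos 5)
Your proof is correct and follows essentially the same route as the paper: the bijection on classical loci is argued identically, and your formulas for $D_{a_P}\nu_0$ and $\delta+\nu_1$ at a constant classical path (including the crucial endpoint-difference terms from $\delta$, which you rightly interpret as ordinary differentiation for the trivialized pullback connection on a constant path) coincide with the paper's, merely written in the $TM$-identification of $P_gM$ instead of the paper's $M\times M$-identification. The only divergence is the last homological step: you pass to the cokernel of the degreewise injective cochain map $\iota_\ast$ and contract it explicitly as a cone of $\pm\,\id$, whereas the paper totalizes the two-row double complex and uses the column-filtration spectral sequence whose first page is the cone of the identity on $T\M|_P$ --- two standard ways of drawing the same conclusion.
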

\begin{pf}
The classical locus of $\M$ is the zero locus of $\lambda_0:M\to L^1$.  The classical locus of $\P\M$ is the zero locus of 
$$D+P_\lin\lambda_0:P_gM\longrightarrow P_\con TM \, dt \, \oplus P_\lin L^1: \, a \mapsto a'\, dt\, + \pi_\lin a^\ast \lambda_0 .$$
The canonical inclusion of the former into the latter is a bijection, because we can first compute the zero locus of $D$, which consists of constant paths. 

Let $P$ be a classical locus of $\M$.  
To compute the tangent complex of $\P\M$ at $P$, we identify $P_gM$ with a neighborhood of the diagonal in $M \times M$ by Proposition~\ref{prop:PgM}. The classical locus $P$ corresponds to $(P,P) \in M \times M$ under this identification. Furthermore, it follows from \eqref{eq:01inputs}, \eqref{eq:PconL} and \eqref{eq:PlinL} that the tangent complex of $\P\M$ at $P$ can be identified with the cochain complex: 
$$
\xymatrix@+1pc{TM|_P\oplus TM|_P\rto_-{D_P\nu_0} & TM\,dt|_P\oplus (L^1\oplus L^1)|_P\rto_-{(\delta+\nu_1)|_P } &L^1\,dt|_P\oplus(L^2\oplus L^2)|_P\rto_-{(\delta+\nu_1)|_P } & \ldots},
$$
where
\begin{align*}
& (D_P\nu_0)(v,w) = \big((w-v)\, dt, \, D_P\lambda_0(v), \, D_P\lambda_0(w)\big), \\
& (\delta+\nu_1)|_P(v\, dt, y_1,z_1) = \big(
 D_P\lambda_0(v) \, dt +  (y_1-z_1)\, dt\, ,\, \lambda_1(y_1), \,  \lambda_1(z_1) \big), \\
& (\delta+\nu_1)|_P(x_k \, dt, y_{k+1},z_{k+1}) = \big( \lambda_1(x_k) \, dt\, +(-1)^{k+1}(z_{k+1}-y_{k+1})\, dt\, , \, \lambda_1(y_{k+1}), \,  \lambda_1(z_{k+1}) \big),
\end{align*}
for $v,w \in TM|_P$, $x_k, y_k, z_k \in L^k|_P$. 
The first component $(w-v)\, dt$ in $(D_P\nu_0)(v,w)$  
is the composition 
$$
T(M\times M)|_{(P,P)} \xrightarrow{((\exp^{\cntnM})^{-1})_\ast} T(TM)|_{0_P} \xrightarrow{\pr} T^\fiber_{0_P}(TM) \cong T M|_P.
$$ 

Finally, the induced morphism on tangent complexes $T\M|_P \to T\P\M|_P$ at a classical point $P$ of $\M$ can be identified with the vertical cochain map:  
\begin{small}
\begin{equation}\label{eq:TangentMapWkEq}
\begin{split}
\xymatrix@+1pc{
TM|_P\rto^-{D_P\lambda_0}\dto_-{\Delta} & L^1|_P\rto^-{\lambda_1|_P}\dto_-{0\oplus\Delta} & L^2|_P\rto^-{\lambda_1|_P} \dto_-{0\oplus\Delta} &\ldots\\
TM|_P\oplus TM|_P\rto_-{D_P\nu_0} & TM\,dt|_P\oplus (L^1\oplus L^1)|_P\rto_-{(\delta+\nu_1)|_P } &L^1\,dt|_P\oplus(L^2\oplus L^2)|_P\rto_-{(\delta+\nu_1)|_P } & \ldots,}
\end{split}
\end{equation}
\end{small}
where $\Delta$ denotes the diagonal maps. 
Consider \eqref{eq:TangentMapWkEq} as a double complex. The vertical cochain map in \eqref{eq:TangentMapWkEq} is a quasi-isomorphism if and only if the total cohomology of the double complex \eqref{eq:TangentMapWkEq} vanishes. We compute the total cohomology by the spectral sequence associated with the filtration by columns. It follows from a direct computation that the first page is the mapping cone of the identity cochain map of the tangent complex of $\M$ at $P$. Thus, the second page vanishes, and the induced map on tangent complexes at $P$ is indeed a quasi-isomorphism, as desired.
\end{pf}

This concludes the proof of Theorem~\ref{thm:Seoul} (1).

We define a morphism $\P\M\to\M\times\M$ as the fiberwise composition of $L_\infty[1]$-morphisms 
\begin{equation}\label{eq:Evaluation}
P\M|_a \xrightarrow{\phi} \widetilde PT\M[-1]|_a 
\xrightarrow{\ev_0 \oplus \ev_1} L|_{a(0)} \oplus L|_{a(1)},
\end{equation}
where $\phi$ is as in \eqref{eq:KIAS1} which can be obtained by the homotopy transfer equation \eqref{eq:TransInc}, and $\ev_t$ is the composition of the projection onto $\Gamma(I, a^\ast L)$ followed by the evaluation map at $t$.

\begin{prop}\label{wow1}
The morphism $\P\M\to \M \times \M$ defined above 
is the linear morphism given by projecting onto $P_\lin L$ composing with evaluation at $0$ and $1$. 
\end{prop}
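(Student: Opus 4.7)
The plan is to exploit the tree formula for $\phi$ (Remark~\ref{rmk:TreeFormula}) together with the key observation that the homotopy operator $\eta$ always produces sections of $\Gamma(I,a^\ast L)$ that vanish at both endpoints $t=0$ and $t=1$. Since the composition \eqref{eq:Evaluation} first projects onto the $\Gamma(I,a^\ast L)$ summand of $\widetilde P T\M[-1]|_a$ and then evaluates at $t=0,1$, any contribution that factors through $\eta$ is automatically killed. This is the engine driving the whole argument.

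First I would handle the higher components $\phi_n$ for $n\ge 2$. By the tree formula, every such tree contains at least one internal node, so its root edge is labelled with $\eta$; hence the image of $\phi_n$ lies in $\operatorname{im}(\eta)\subset\Gamma(I,a^\ast L)$, which consists of sections vanishing at $t=0$ and $t=1$. Composing with $\ev_0\oplus\ev_1$ therefore annihilates $\phi_n$ for all $n\ge 2$, which is precisely what it means for the composed morphism to be linear.

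Next I would reduce $\phi_1$ to $\iota$ modulo an endpoint-vanishing correction. From the computation preceding Lemma~\ref{lem:ForSubalg}, the identity
\[
\phi_1 \;=\; \iota \;-\; \eta\,(a^\ast\tilde\lambda_1 + a^\ast\cntnL\lambda_0)\,\iota
\]
holds, and the second summand again factors through $\eta$, hence is killed by $\ev_0\oplus\ev_1$. Thus $(\ev_0\oplus\ev_1)\circ\phi = (\ev_0\oplus\ev_1)\circ\iota$ as a set-theoretic map on $\P\M|_a$.

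Finally I would unwind $(\ev_0\oplus\ev_1)\circ\iota$ on each of the three summands of $\P\M|_a$. On $\Gamma_\con(I,a^\ast TM)\,dt$ and $\Gamma_\con(I,a^\ast L)\,dt$ the inclusion lands in the $dt$-part of $\widetilde P T\M[-1]|_a$, which is orthogonal to $\Gamma(I,a^\ast L)$, so these summands contribute zero. On $\Gamma_\lin(I,a^\ast L)=P_\lin L|_a$ the map $\iota$ is literally the inclusion of linear paths into all paths, and since a linear path is determined by its values at $0$ and $1$, the map $\ev_0\oplus\ev_1$ realizes the standard isomorphism $P_\lin L|_a\xrightarrow{\sim} L|_{a(0)}\oplus L|_{a(1)}$ as in \eqref{eq:PlinL}. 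This is precisely the linear morphism described in the statement, completing the proof. The only genuine bookkeeping subtlety is verifying that every tree with $\ge 1$ internal node has its root decorated by $\eta$, which is immediate from the conventions set up in Remark~\ref{rmk:TreeFormula}; beyond that, the result is essentially formal.
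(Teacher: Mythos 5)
Your proof is correct and rests on the same key observation as the paper's: by the recursion \eqref{eq:TransInc} (equivalently, the tree formula), $\im(\phi-\iota)\subset\im(\eta)$, while $\im(\eta)\subset\ker(\ev_0\oplus\ev_1)$, so the composition reduces to $(\ev_0\oplus\ev_1)\circ\iota$, i.e.\ projection onto $P_\lin L$ followed by endpoint evaluation. You merely spell out the $n\ge 2$ and $n=1$ cases separately and unwind the final identification $P_\lin L\cong \ev_0^\ast L\oplus\ev_1^\ast L$, which the paper leaves implicit.
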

\begin{pf}
It suffices to show that the composition of $L_\infty[1]$-morphisms \eqref{eq:Evaluation} is a linear morphism. 
To prove linearity, we observe from \eqref{eq:TransInc} that $\im(\phi - \iota) \subset \im(\eta)$. Here $\phi$ and $\iota$ are considered as maps $\Sym\P\M|_a \to \widetilde PT\M[-1]|_a$. Thus, the linearity follows from the fact $\im(\eta) \subset \ker(\ev_0 \oplus \ev_1)$.
\end{pf}

This concludes the proof of   Theorem~\ref{thm:Seoul} (2). 

Now, we are ready to prove the main theorem.

\begin{thm}
\label{thm:main}
The category of $L_\infty$-bundles is a category of fibrant objects.
\end{thm}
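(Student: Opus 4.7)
The plan is straightforward: the theorem amounts to collecting the six axioms of Definition~\ref{def:CFO}, nearly all of which have been verified piecemeal earlier in the paper. First I would note that the category of $L_\infty$-bundles has a terminal object, namely $\ast$ (Remark~\ref{finalremark}), and admits finite products, realized as pullbacks over $\ast$ using Proposition~\ref{pullex}. Axiom~(i) (two-out-of-three for weak equivalences) was already observed after the definition of weak equivalence, Axiom~(ii) (composition of fibrations) is immediate from Definition~\ref{defnfib}, and Axiom~(v) holds because the unique morphism $\M \to \ast$ is a linear fibration for any $\M$. Axiom~(iii) is exactly Proposition~\ref{pullex}, and Axiom~(iv) is the corollary immediately following Proposition~\ref{prop:PullbackEtale}.

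The only substantive ingredient is Axiom~(vi), the factorization property, and the main obstacle has already been overcome in Section~\ref{sec:DerPathSp}. Rather than attempting to factor an arbitrary morphism, I would invoke the Factorization Lemma of Brown~\cite{MR341469} cited after Lemma~\ref{lem:UniquePathSp}, which asserts that, once Axioms~(i)--(v) hold, it suffices to exhibit a path space object for every object. This is precisely what Theorem~\ref{thm:Seoul} provides: for any $L_\infty$-bundle $\M$, after choosing an affine connection on $M$ and a linear connection on $L$, the derived path space
\[
\P\M = \bigl(P_gM,\, P_\con(TM\oplus L)\,dt \oplus P_\lin L,\, \delta+\nu\bigr)
\]
fits into the decomposition $\M \xrightarrow{\iota} \P\M \xrightarrow{(\ev_0\oplus\ev_1)\circ\pr} \M\times\M$, in which $\iota$ is a linear weak equivalence (part~(1) of Theorem~\ref{thm:Seoul}, established by Proposition~\ref{prop:WeakEqui}) and the evaluation morphism is a linear fibration (part~(2), established by Proposition~\ref{wow1}). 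The composition equals the diagonal $\M \to \M\times\M$, since a constant path evaluates to its basepoint at both $0$ and $1$. Hence every object has a path space object, the Factorization Lemma applies, and all six axioms of Definition~\ref{def:CFO} are satisfied, completing the proof.
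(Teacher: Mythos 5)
Your proposal is correct and follows essentially the same route as the paper: the earlier sections dispose of Axioms (i)--(v), and the proof of Theorem~\ref{thm:main} reduces, via Brown's Factorization Lemma, to exhibiting $\P\M$ as a path space object, using Proposition~\ref{prop:WeakEqui} for the weak equivalence and Proposition~\ref{wow1} (plus the observation that the base map is an open embedding and the bundle map a projection) for the fibration. The only difference is cosmetic: the paper re-verifies the fibration condition explicitly inside the proof of Theorem~\ref{thm:main}, whereas you cite Theorem~\ref{thm:Seoul}(2) directly, which is legitimate.
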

\begin{pf}
It remains to show that every $L_\infty$-bundle $\M$ has a path space object. We claim that $\P\M$ is a path space object for $\M$. 
The composition $\M\to\P\M\to\M\times\M$ is clearly the diagonal morphism, and the morphism $ \M \to \P\M$ is a weak equivalence by Proposition~\ref{prop:WeakEqui}. 
Thus, it remains to show that the morphism $\P\M\to\M\times\M$ is a fibration of $L_\infty$-bundles. 

On the level of the base manifolds, this morphism is the open embedding $P_gM\to M\times M$. The linear part (which is the whole, by 
Proposition~\ref{wow1}) of the $L_\infty[1]$-morphism from
 $P_\con(TM\oplus L)\,dt \oplus P_\lin L$ to $\ev^*_0L\oplus \ev^*_1 L$ is simply the projection onto $P_\lin L$, followed by the identification $P_\lin L 
\cong \ev^*_0L\oplus \ev^*_1 L$.  This is clearly an epimorphism of vector bundles.
\end{pf}

\begin{rmk}
There is a very closely related model structure in
\cite[Proposition 3.12]{MR4036665}, to which we would
like to call readers' attention. 
\end{rmk}

\subsection{Canonicality of $\P\M$}

We now prove the last part of Theorem~\ref{thm:Seoul}: canonicality. First, different choices of affine connections on $M$ result diffeomorphic $P_gM$ (along an open neighborhood of constant paths). Hence, without loss of generality, we may fix any affine connection on $M$ and the corresponding $P_gM$.   

Let $\cntnL$ and $\bar\nabla$ be any two linear connections on $L$. We will construct a morphism $\Psi:\P^{\cntnL}\M \to \P^{\bar\nabla}\M$ of the corresponding $L_\infty$-bundles such that it is an isomorphism in an open neighborhood of constant paths in $P_gM$. 

Following the notations in the proof of Proposition~\ref{pro:TM1}, we denote by $\alpha$ the bundle map $\alpha:TM \otimes L \to L$ defined by $\alpha(v,x) = \bar\nabla_v x - \cntnL_v x, \, \forall \, v \in \Gamma(TM), \, x \in \Gamma(L)$, and denote by  $\Phi = \Phi^{\bar\nabla,\cntnL}$ the isomorphism \eqref{eq:CanonicalIsoTM[-1]}.

\begin{lem}
The morphism $\Phi$ induces a morphism of curved $L_\infty[1]$-algebras
$$a^\ast\Phi: \widetilde P^{\cntnL} T\M[-1]|_a \to \widetilde P^{\bar\nabla} T\M[-1]|_a$$ 
at each short geodesic path $a \in P_gM$. 
\end{lem}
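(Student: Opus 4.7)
The plan is to verify, arity by arity, the curved $L_\infty[1]$-morphism equation
\[
(a^\ast\Phi)\circ(\delta^{\cntnL} + a'\,dt + a^\ast\mu^{\cntnL}) \;=\; (\delta^{\bar\nabla} + a'\,dt + a^\ast\mu^{\bar\nabla})\bullet(a^\ast\Phi),
\]
where $a^\ast\Phi = (a^\ast\Phi_n)_{n\geq 1}$ is defined by applying the bundle maps $\Phi_n$ pointwise along $a$. Since $\Phi_1 = \id$ and $\Phi_n = 0$ for $n \geq 3$, and since $\delta$ is $1$-ary while $a'\,dt$ is $0$-ary, both sides expand into a finite list of arity-wise identities.

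First, using the bilinearity of $\circ$ and the linearity of $\bullet$ in its first slot, I would split the equation along the three summands of the left hand side. The $a^\ast\mu$ part follows by pulling back the bundle-level $L_\infty$-bundle morphism identity $\Phi\circ\mu^{\cntnL} = \mu^{\bar\nabla}\bullet\Phi$ (from Proposition~\ref{pro:TM1}) pointwise along $a$, which gives
\[
(a^\ast\Phi)\circ a^\ast\mu^{\cntnL} \;=\; a^\ast\mu^{\bar\nabla}\bullet(a^\ast\Phi),
\]
because the compositions $\circ$ and $\bullet$ are defined fiberwise. This reduces the claim to the single identity
\[
(a^\ast\Phi)\circ(\delta^{\cntnL} + a'\,dt) \;=\; (\delta^{\bar\nabla} + a'\,dt)\bullet(a^\ast\Phi).
\]

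Next, I would verify this remaining identity arity-wise. At arity $0$ both sides equal $a'\,dt$, using $\Phi_1 = \id$. At arity $\geq 2$, every surviving term factors either through $\Phi_{\geq 3}$ (zero), through $\delta^{\bar\nabla}$ applied to an element of $L\,dt$ (zero, since $\delta$ would land in $L\,dt\,dt = 0$), or through $a^\ast\Phi_2$ fed a $\delta^{\cntnL}\xi_i \in L\,dt$ input; the last vanishes because $\delta^{\cntnL}\xi_i$ has trivial $TM$- and $L$-components and the explicit formula $\Phi_2(\xi_1,\xi_2) = \alpha(v_2,x_1)\,dt + (-1)^{|x_2|}\alpha(v_1,x_2)\,dt$ returns zero on such inputs. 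Hence the only nontrivial case is arity $1$, which reads
\[
\delta^{\bar\nabla}\xi - \delta^{\cntnL}\xi \;=\; a^\ast\Phi_2(a'\,dt, \xi)
\]
for $\xi = v\,dt + y\,dt + x$. Both sides evaluate to $(-1)^{|x|}\alpha(a'(t), x)\,dt$: the left by the definitions of $\delta$ and of $\alpha = \bar\nabla - \cntnL$, and the right by the explicit formula for $\Phi_2$ with first input $a'\,dt$.

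The principal obstacle is the bookkeeping of Koszul signs and the careful enumeration of the terms produced by the expansions of $\circ$ and $\bullet$. No further analytic input is needed: the identities reduce to finite-dimensional pointwise statements in the fibers of the pullback bundles along $a$, and the geometric content is the single arity-$1$ match between the connection-difference term $\alpha(a',x)\,dt$ and the explicit second bracket $\Phi_2(a'\,dt,\xi)$.
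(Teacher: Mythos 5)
Your proof is correct and takes essentially the same route as the paper: both dispose of the $\mu$-terms by pulling back the bundle-level identity $\Phi\circ\mu^{\cntnL}=\mu^{\bar\nabla}\bullet\Phi$ of Proposition~\ref{pro:TM1} pointwise along $a$, and reduce the genuinely new content to a one-input check built on the explicit formula for $\Phi_2$. The only difference is bookkeeping: the paper organizes the whole morphism equation by arity and verifies the one-input equation with the $\mu$-terms still present, whereas you split off the $\mu$-part first (using bilinearity of $\circ$ and first-slot linearity of $\bullet$), which isolates the cleaner identity $a^\ast\Phi_2(a'\,dt,\xi)=\delta^{\bar\nabla}\xi-\delta^{\cntnL}\xi=(-1)^{|x|}\alpha(a',x)\,dt$.
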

\begin{pf}
To prove $a^\ast\Phi$ is a morphism, we check the defining equation of $L_\infty[1]$-morphism:
$$
a^\ast\Phi \circ (a'\, dt\, + \delta^{\cntnL} + a^\ast\mu^{\cntnL}) = (a'\, dt\, + \delta^{\bar\nabla} + a^\ast\mu^{\bar\nabla}) \bullet a^\ast\Phi
$$
which is considered as a sequence of equations labeled by the number of inputs. In fact, it is simple to see that the higher equations (with 2 or more inputs) follow from the fact that $\Phi$ is an $L_\infty[1]$-morphism: $\Phi\circ \mu^{\cntnL} = \mu^{\bar\nabla} \bullet \Phi$, and the zeroth equation $a^\ast\Phi_1(a' \, dt + a^\ast \mu^{\cntnL}_0) = a' \, dt + a^\ast \mu^{\bar\nabla}_0 $ simply follows from the definitions of $\mu^{\cntnL}$, $\mu^{\bar\nabla}$ and the fact that $\Phi_1 = \id$.  Thus, it remains to check the first equation:  
For any $\xi = v\, dt + y\, dt + x \in  \widetilde P^{\cntnL} T\M[-1]|_a$, we have 
\begin{align*}
& (a^\ast\Phi_2)(a' \, dt + a^\ast \mu^{\cntnL}_0 , \xi) + (a^\ast\Phi_1)\big((\delta^{\cntnL} + a^\ast\mu^{\cntnL}_1)(\xi) \big) \\
& \qquad = 
\alpha(v , a^\ast\lambda_0)\, dt\, + (-1)^{|x|} \alpha(a', x)\, dt \, \\
& \qquad \quad + (-1)^{|x|} (\cntnL_{a'} x) \, dt  \, + a^\ast(\cntnL_{v} \lambda_0) \, dt \, + a^\ast\lambda_1(y) \, dt \, + a^\ast\lambda_1(x) \\
& \qquad = (-1)^{|x|} (\bar\nabla_{a'} x) \, dt  \, + a^\ast(\bar\nabla_{v} \lambda_0) \, dt \, + a^\ast\lambda_1(y) \, dt \, + a^\ast\lambda_1(x) \\
& \qquad  =  (\delta^{\bar\nabla} + a^\ast \mu^{\bar\nabla}_1)\big(\Phi^{\bar\nabla,\cntnL}_1(\xi)\big).
\end{align*} 
This completes the proof. 
\end{pf}

By Proposition~\ref{transfertheorem}, we have the $L_\infty[1]$-morphisms
\begin{gather*}
\phi^{\cntnL}: \P^{\cntnL}\M|_a \longrightarrow \widetilde P^{\cntnL} T\M[-1]|_a, \\
\tilde\pi^{\bar\nabla}: \widetilde P^{\bar\nabla} T\M[-1]|_a\longrightarrow \P^{\bar\nabla}\M|_a.
\end{gather*}
See also \eqref{eq:KIAS1} and \eqref{eq:KIAS2}. We define $\Psi:\P^{\cntnL}\M \to \P^{\bar\nabla}\M$ by 
\begin{equation}\label{eq:Canonicality:Psi|_a}
\Psi|_a := \tilde\pi^{\bar\nabla} \bullet (a^\ast\Phi^{\bar\nabla,\cntnL}) \bullet \phi^{\cntnL}.
\end{equation}

\begin{prop}
There exists an open neighborhood $U$ of constant paths in $P_gM$ such that the $L_\infty[1]$-morphism $\Psi|_a:\P^{\cntnL}\M|_a \to \P^{\bar\nabla}\M|_a$ is an isomorphism for any $a \in U$.
\end{prop}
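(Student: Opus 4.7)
The plan is to reduce the statement to a linear invertibility claim that we only need to verify at constant paths. First, $\Psi|_a$ is a composition of three $L_\infty[1]$-morphisms --- $\phi^{\cntnL}$, $a^\ast\Phi^{\bar\nabla,\cntnL}$, and $\tilde\pi^{\bar\nabla}$ --- hence is itself an $L_\infty[1]$-morphism between curved $L_\infty[1]$-algebras. The algebraic argument in the proof of Proposition~\ref{oneiso} works fiberwise for curved $L_\infty[1]$-algebras (solving $\psi\bullet\phi=\id$ recursively requires only that the linear part be invertible), so $\Psi|_a$ is an $L_\infty[1]$-isomorphism if and only if $(\Psi|_a)_1$ is a linear isomorphism. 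Using $\Phi^{\bar\nabla,\cntnL}_1=\id$ from Proposition~\ref{pro:TM1}, this linear part simplifies to
\[
(\Psi|_a)_1 \;=\; \tilde\pi^{\bar\nabla}_1 \comp \phi^{\cntnL}_1.
\]

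Since $\P^{\cntnL}\M$ and $\P^{\bar\nabla}\M$ are vector bundles of equal constant rank $\dim M+3\,\rk(L)$ over $P_gM$, and $(\Psi|_a)_1$ depends smoothly on $a$, the locus where $(\Psi|_a)_1$ is invertible is open in $P_gM$. It therefore suffices to verify invertibility on the submanifold of constant paths, which is canonically diffeomorphic to $M$ under $P\mapsto $ constant path at $P$.

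At a constant path $a\equiv P$, the pullback $a^\ast L$ canonically trivializes as $I\times L|_P$, and both pulled-back linear connections $a^\ast\cntnL$ and $a^\ast\bar\nabla$ coincide with the trivial connection. Consequently the subspaces $\P^{\cntnL}\M|_a$ and $\P^{\bar\nabla}\M|_a$ of $\widetilde PT\M[-1]|_a$ coincide with a common graded vector space
\[
V \;=\; T_PM\,dt \,\oplus\, \Gamma_{\con}(I,L|_P)\,dt \,\oplus\, \Gamma_{\lin}(I,L|_P),
\]
the homotopies $\eta^{\cntnL}=\eta^{\bar\nabla}=:\eta$ agree (the formula for $\eta$ uses only the trivialization, which is common), and so do the projections $\pi^{\cntnL}=\pi^{\bar\nabla}=:\pi$. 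With the filtration $F_k=\Gamma(I,a^\ast L^{\geq k}\oplus L^{\geq k}\,dt)$ (and $F_0$ the whole space), every piece of $a^\ast\mu^{\cntnL}_1$ and $a^\ast\mu^{\bar\nabla}_1$ --- namely $a^\ast\lambda_1$, $a^\ast\tilde\lambda_1$, and $a^\ast(\nabla\lambda)_1$ --- strictly raises filtration by one. Using Remark~\ref{rmk:0th1stMapInHptTransfer},
\[
\phi^{\cntnL}_1 \;=\; (1+\eta\,a^\ast\mu^{\cntnL}_1)^{-1}\,\iota, \qquad \tilde\pi^{\bar\nabla}_1 \;=\; \pi\,(1+a^\ast\mu^{\bar\nabla}_1\,\eta)^{-1},
\]
so $\phi^{\cntnL}_1-\iota$ and $\tilde\pi^{\bar\nabla}_1-\pi$ each raise filtration by at least one. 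Since $\pi\comp\iota=\id_V$, a short calculation gives
\[
(\Psi|_a)_1 \;-\; \id_V \;=\; \tilde\pi^{\bar\nabla}_1\comp\phi^{\cntnL}_1 \;-\; \pi\comp\iota
\]
which raises filtration by at least one and is therefore nilpotent (as $F_{n+1}=0$ for $n$ the amplitude of $L$). Hence $(\Psi|_a)_1=\id_V+N$ with $N$ nilpotent is invertible at every constant path, and the openness established above yields the required neighborhood $U$.

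The main obstacle is the algebraic bookkeeping in the last step: one must verify carefully that, at a constant path, the two a priori distinct curved $L_\infty[1]$-subalgebras $\P^{\cntnL}\M|_a$ and $\P^{\bar\nabla}\M|_a$ of $\widetilde PT\M[-1]|_a$ coincide as graded vector spaces (not merely as abstract bundles), together with their contraction data $(\eta,\pi)$, so that both $\phi^{\cntnL}_1$ and $\tilde\pi^{\bar\nabla}_1$ can be compared on the same space $V$. Once this identification is in place, the nilpotent-perturbation argument just outlined and the openness of the invertibility locus together yield the proposition.
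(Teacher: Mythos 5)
Your argument is correct, and its skeleton (reduce to invertibility of the linear part via the recursive-inversion argument of Proposition~\ref{oneiso}, check invertibility at constant paths, conclude on an open neighborhood by smoothness of $\Psi_1$) matches the paper's. Where you diverge is in the key verification: the paper computes $\Psi_1|_a$ \emph{exactly}, for every short geodesic $a$, showing via Remark~\ref{rmk:0th1stMapInHptTransfer} and the vanishing identities $\eta^{\bar\nabla}\eta^{\cntnL}=0$, $\pi^{\bar\nabla}\nu_1\eta^{\bar\nabla}=0$, $\pi^{\bar\nabla}\eta^{\cntnL}=0$ that $\Psi_1|_a=\pi^{\bar\nabla}\iota^{\cntnL}$, and then observes that at a constant path this is the identity under the identification of Remark~\ref{rmk:DerPathSpAsVB}. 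You instead avoid any exact cancellation: you note that along a constant path both pullback connections become the trivial connection on $I\times L|_P$, so the subspaces $\Gamma_\con$, $\Gamma_\lin$ and the contraction data $(\eta,\pi)$ for the two transfers literally coincide, and then you show $\Psi_1|_a-\id$ is filtration-raising (hence nilpotent, since $F_{n+1}=0$), using only that the unary perturbations $a^\ast\mu^{\cntnL}_1$ and $a^\ast\mu^{\bar\nabla}_1$ raise the filtration --- note the two perturbations still differ at a constant path through the $\cntnL\lambda_0$ versus $\bar\nabla\lambda_0$ terms, but your argument never needs them to agree. The trade-off: your route is more structural and robust (no bookkeeping of which compositions vanish), while the paper's explicit formula $\Psi_1|_a=\pi^{\bar\nabla}\iota^{\cntnL}$ buys more --- it is valid for all $a\in P_gM$ and is exactly what the subsequent example uses to exhibit a path far from the constants where $\Psi_1|_a$ fails to be invertible, information your identity-plus-nilpotent estimate at constant paths does not provide.
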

\begin{pf}
For $a \in P_gM$, the linear part $\Psi_1|_a$ is given by   
\begin{align*}
\Psi_1|_a & = \tilde\pi^{\bar\nabla}_1  \id  \phi^{\cntnL}_1  \\
& = \pi^{\bar\nabla}( \id + \nu^{\bar\nabla}_1\eta^{\bar\nabla})^{-1} (\id + \eta^{\cntnL} \nu^{\cntnL}_1)^{-1} \iota^{\cntnL} 
\qquad (\text{by Remark~\ref{rmk:0th1stMapInHptTransfer}}) \\
& = \pi^{\bar\nabla}( \id - \nu^{\bar\nabla}_1\eta^{\bar\nabla}) (\id - \eta^{\cntnL} \nu^{\cntnL}_1) \iota^{\cntnL}  \\
& = \pi^{\bar\nabla}\iota^{\cntnL} - \pi^{\bar\nabla}\nu^{\bar\nabla}_1\eta^{\bar\nabla}\iota^{\cntnL} - \pi^{\bar\nabla}\eta^{\cntnL} \nu^{\cntnL}_1 \iota^{\cntnL}  \qquad   \\
& = \pi^{\bar\nabla}\iota^{\cntnL}.    
\end{align*}
Here we used the equations $\eta^{\bar\nabla}\eta^{\cntnL} = 0$, $\pi^{\bar\nabla}\nu^{\bar\nabla}_1\eta^{\bar\nabla} =0$ and $\pi^{\bar\nabla}\eta^{\cntnL} =0$.  
The last two equations can be proved by a similar argument as in the proof of Lemma~\ref{lem:lambda}. 

Now let $a: I \to M$ be a constant path at $P\in M$. 
Under the identification in Remark~\ref{rmk:DerPathSpAsVB}, the map $\Psi|_a = \pi^{\bar\nabla}\iota^{\cntnL}$ is the identity map and hence an isomorphism. Therefore, $\Psi|_a$ is an isomorphism in an open neighborhood of constant paths. 
\end{pf}

The following example shows that $\Psi|_a$ might not be an isomorphism when $a$ is far away from constant paths.

\begin{ex}
Let $M=\rr$, $L = M \times \rr^2[-1]$. Let $\nabla$ and $\bar\nabla$ be the connections with the properties:
\begin{gather*}
\nabla_{\partial_t}e_1 = -2\pi \, e_2, \quad \nabla_{\partial_t}e_2 = 2\pi \, e_1; \\
\bar\nabla_{\partial_t}e_1 = 0, \quad \bar\nabla_{\partial_t}e_2 = 0; 
\end{gather*}
where $\{e_1, e_2\}$ is the standard frame for $L$. For the path $a(t)=t$, the $\nabla$-constant paths $\Gamma^{\nabla}_\con(I,a^\ast L)$ over $a$ is the subspace in $\Gamma(I,a^\ast L) \cong C^\infty(\rr, \rr^2)$ spanned by the basis 
\begin{align*}
e^{\nabla}_1 &= \hphantom{-} \cos(2\pi t) e_1 + \sin(2\pi t) e_2, \\
e^{\nabla}_2 &= -\sin(2\pi t) e_1 + \cos(2\pi t) e_2. 
\end{align*}
Then 
$$
\Psi_1|_a(e_1^{\nabla} \, dt) = \pi^{\bar\nabla}\iota^\nabla(e_1^\nabla \, dt) 
 = \left( \Big(\int_0^1\cos(2\pi s) \, ds \Big) \, e_1 + \Big(\int_0^1 \sin(2\pi s) \, ds \Big)\, e_2 \right) \, dt = 0.
$$
In fact, one can show that $\Psi_1|_a$ is isomorphic to the projection operator 
$$\rr \, dt  \oplus \rr^2[-1] \, dt  \oplus \rr^4[-1] \to \rr \, dt  \oplus \rr^2[-1] \, dt  \oplus \rr^4[-1], \quad v\, dt\, + y\, dt\, + x \mapsto v\, dt\, + x.
$$
\end{ex}

\subsection{Further remarks}

As we have seen in the previous subsections, a model for the derived path space $\P\M$ of the $L_\infty$-bundle $\M=(M,L,\lambda)$ is given by the graded vector bundle
$$\P\M=P_\con(TM\oplus L)\,dt\oplus P_\lin L\,,$$
over the manifold $P_gM$.  Here $P_\con(TM\oplus L)\,dt$ is a curved $L_\infty[1]$-ideal in $\P\M$.  The fibration $\P\M\to\M\times \M$ is given by dividing out by this $L_\infty[1]$-ideal, and identifying the quotient $P_\lin L$ with $\ev_0^\ast L\oplus \ev_1^\ast L$, over the embedding $P_gM\subset M\times M$. 

Note that $P_\lin L\subset \P\M$ is not a curved $L_\infty[1]$-subalgebra, as the embedding does not preserve the curvature.

If $\M$ is of amplitude $n$, then $\P\M$ is an $L_\infty$-bundle of amplitude $n+1$, and thus for degree reasons, the $k$-th brackets on $\P\M$ vanish for all $k \geq n+1$.

The curvature on $\P\M$ is given in \eqref{eq:TransferredCurvature}. 
In general, the operations on $\P\M$ can be computed by the recursive formulas \eqref{eq:TransInc} and \eqref{eq:TransLoo}, or by the tree formulas in Remark~\ref{rmk:TreeFormula}. More explicitly, the operations $\delta+\nu$ on $\P\M$ decompose into a part that has image in $P_\lin L$, and a part that has image in $P_\con(TM\oplus L)\,dt$.  The part that has image in $P_\lin L$ is the term $\pi(a^\ast\lambda) \bullet \phi$ in \eqref{eq:TransLoo} and is identified with $\ev_0^\ast L\oplus \ev_1^\ast L$, as seen in Lemma~\ref{lem:lambda}. So the only interesting part is the one which has image in $P_\con(TM\oplus L)\,dt$.  The tree sum for this part is such that the operation on every node is $\cntnL\lambda+\tilde\lambda$ with exactly one of the incoming edges is labeled with an input from $P_\con(TM\oplus L)\,dt$.

Let us work out a few examples.  The case where $\M$ is a manifold was treated in Section~\ref{sec:coam}. 

\begin{ex}
Suppose that $L=L^1$. This is the {\em quasi-smooth }case. 
Then the derived path space of $\M=(M,L,\lambda_0)$ is the following  $L_\infty$-bundle of amplitude~2: the base is $P_gM$, a manifold of short geodesic paths. The vector bundle in degree 1 has fiber $\Gamma_\con(I,a^\ast TM) \, dt   \oplus \Gamma_\lin(I,a^\ast L)$ over the geodesic path $a:I \to M$. The curvature
is $a'dt \in \Gamma_\con(I,a^\ast TM) \, dt$, and the linear interpolation between
$\lambda_0|_{a(0)}$ and $\lambda_0|_{a(1)}$ in $\Gamma_\lin(I,a^\ast L)$. In degree 2, the vector
bundle has fiber $\Gamma_\con(I, a^\ast L)\,dt$, and the twisted differential is given by 
\begin{align*}
\Gamma_\con(I,a^\ast TM) \, dt & \longrightarrow \Gamma_\con(I, a^\ast L)\,dt \\
v \, dt & \longmapsto \Big(\int_0^1 \cntnL_{v(u)}\lambda_0(u)\,du \Big)\, dt
\end{align*}
and
\begin{align*}
\Gamma_\lin(I, a^\ast L)& \longrightarrow \Gamma_\con(I, a^\ast L)\,dt\\
\alpha & \longmapsto \delta\alpha = -\big(\alpha(1)-\alpha(0)\big)\,dt \, .
\end{align*}
All the higher operations vanish.

In particular, if $L=L^1 = M \times V$ is a trivial vector bundle, the sections of $L$ are naturally identified with the space $C^\infty(M,V)$ of smooth maps from $M$ to $V$, and so a vector field on $M$ acts on the sections of $L$ by derivatives. This action defines the trivial linear connection $\nabla$ on $L$. With this trivial connection, the space $\Gamma_\con(I, a^\ast L)$ is identified with  the space $C^\infty_\con(I,V) \cong V$ of constant paths from $I$ to $V$, and the space $\Gamma_\lin(I, a^\ast L)$ is identified with the space $C^\infty_\lin(I,V)=\{ \alpha:I \to V \mid \alpha(u)=cu+d \text{ for some } c,d \in \rr\}$.  The twisted differential is then given by 
\begin{align*}
\Gamma_\con(I,a^\ast TM) \, dt & \longrightarrow V \,dt, \; v \, dt \longmapsto \Big(\int_0^1 v(u)(\lambda_0) \,du \Big)\, dt\, , \\
\Gamma_\lin(I, a^\ast L) & \longrightarrow V \,dt, \; 
\alpha  \longmapsto  -\big(\alpha(1)-\alpha(0)\big)\,dt \, .
\end{align*}
Here $v(u)$ is a tangent vector of $M$ at $a(u)$, and the curvature $\lambda_0$ is considered as a smooth map $\lambda_0:M \to V$. 
\end{ex}

\begin{ex}
If $\M=(M,L,\lambda)$ is of amplitude 2, the only non-zero operations are the curvature $\lambda_0$ and the differential $\lambda_1$.  In this case, $\P\M$ has amplitude 3.  
The curvature and the twisted differential are described in \eqref{eq:01inputs}. The binary bracket on $\P\M$ is the sum of two operations
$$\pi_\con\cntnL\lambda_1:\Gamma_\con(I,a^\ast TM) \, dt\otimes \Gamma_\lin(I, a^\ast L^1)\longrightarrow \Gamma_\con(I, a^\ast L^2)\,dt$$
and the operation 
$$\Gamma_\con(I,a^\ast TM) \, dt\otimes \Gamma_\con(I,a^\ast TM) \, dt \longrightarrow \Gamma_\con(I, a^\ast L^2)\,dt$$
given by 
$$x\otimes y\longmapsto - \pi_\con(\cntnL\lambda_1)\big(\eta\cntnL\lambda_0(x),y\big) - \pi_\con(\cntnL\lambda_1)\big(x,\eta\cntnL\lambda_0(y)\big) \,.$$
All the higher operations vanish due to degree reason.
\end{ex}

\begin{rmk}
It   would be interesting to extend the above  construction of derived path spaces to that of
``higher derived path spaces", and study their relation with
the infinite category structure of $L_\infty$-bundles.
\end{rmk}

\subsection{Homotopy fibered products and derived intersections}\label{sec:HptFibProd}

As an important application, Theorem \ref{thm:main} allows us
to form the homotopy fibered product  of $L_\infty$-bundles. Recall that given arbitrary morphisms $X \to Z \leftarrow Y$ in a category of fibrant objects $\C$, one can form a homotopy fibered product $X \times_Z^h Y$ in the homotopy category $\Ho(\C)$ \cite{BLX2,MR341469}. It is represented by the object $X \times_Z P \times_Z Y$ in $\C$, where $P$ is a path space object of $Z$. See Section~\ref{sec:CFO}.

As we see below, the usual dimension formula still holds for homotopy fiber products of $L_\infty$-bundles.

\begin{prop}
Let $\M \to \N \leftarrow \M'$ be arbitrary morphisms of $L_\infty$-bundles. Then
$$
\ddim(\M \times^h_\N \M') = \ddim(\M) + \ddim(\M') - \ddim(\N)\,.$$
\end{prop}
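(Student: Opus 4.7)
The plan is to represent the homotopy fibered product by an actual fibered product in $\C$, using the path space object $\P\N$ constructed in Theorem~\ref{thm:Seoul}, and then iteratively apply the dimension formula for ordinary fibered products along fibrations (Remark~\ref{rmk:DerDim&FibProd}) together with the fact that weak equivalences preserve virtual dimension.

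First, I would fix the path space object $\P\N$ and form the representative
\[
\M\times^h_\N\M'\;\simeq\;\M\times_\N \P\N\times_\N \M'
\]
in the homotopy category. To make sense of this as an actual fibered product, I need both legs of $\P\N$ into $\N$ to be fibrations. The morphism $\P\N\to\N\times\N$ is a fibration by Theorem~\ref{thm:Seoul}(2), and each projection $\N\times\N\to\N$ is a pullback of the terminal fibration $\N\to\ast$ (Axiom~\ref{axiom:PullbackFib}, Axiom~\ref{axiom:FibrantObj}), hence a fibration. Composing (Axiom~\ref{axiom:Fibration}) shows that $\ev_0,\ev_1:\P\N\to\N$ are both fibrations.

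Next I would compute dimensions step by step. Form $\M\times_\N\P\N$ using $\ev_0:\P\N\to\N$; this is a legitimate fibered product against a fibration, so Remark~\ref{rmk:DerDim&FibProd} gives
\[
\ddim(\M\times_\N\P\N)=\ddim(\M)+\ddim(\P\N)-\ddim(\N).
\]
Moreover, pulling back the fibration $\ev_1:\P\N\to\N$ along $\M\times_\N\P\N\to\P\N\xrightarrow{\ev_1}\N$ shows that the composite $\M\times_\N\P\N\to\N$ is a fibration, so one can form $(\M\times_\N\P\N)\times_\N\M'$ and apply Remark~\ref{rmk:DerDim&FibProd} again:
\[
\ddim\bigl(\M\times_\N\P\N\times_\N\M'\bigr)=\ddim(\M\times_\N\P\N)+\ddim(\M')-\ddim(\N).
\]

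Finally, by Theorem~\ref{thm:Seoul}(1) the canonical inclusion $\N\to\P\N$ is a weak equivalence, and weak equivalences preserve virtual dimension (as noted in the paper, since the Euler characteristic of the tangent complex is a quasi-isomorphism invariant). Hence $\ddim(\P\N)=\ddim(\N)$, and substituting into the two displays above yields
\[
\ddim(\M\times_\N\P\N\times_\N\M')=\ddim(\M)+\ddim(\M')-\ddim(\N),
\]
which is the desired formula since virtual dimension is a weak-equivalence invariant and hence well-defined on the isomorphism class of $\M\times^h_\N\M'$ in $\Ho(\C)$ (independence of the choice of $\P\N$ follows from the uniqueness of path space objects up to trivial fibrations, Lemma~\ref{lem:UniquePathSp}).

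No step is really hard; the only point requiring a little care is verifying that both evaluation maps $\ev_0,\ev_1:\P\N\to\N$ are fibrations so that the iterated ordinary fibered product is well-formed and Remark~\ref{rmk:DerDim&FibProd} applies at each stage. Everything else is bookkeeping.
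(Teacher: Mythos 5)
Your proof is correct and takes essentially the same route as the paper: represent $\M\times^h_\N\M'$ by $\M\times_\N\P\N\times_\N\M'$, apply the fibered-product virtual-dimension formula (Remark~\ref{rmk:DerDim&FibProd}) twice, and use $\ddim(\P\N)=\ddim(\N)$ because $\N\to\P\N$ is a weak equivalence. The paper's argument is simply terser, leaving implicit the bookkeeping you spell out about the evaluation maps being fibrations.
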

\begin{pf}
Let $\P$ be a path space object of $\N$. Since there is a weak equivalence $\N \xrightarrow\sim \P$, the virtual dimensions of $\N$ and $\P$ are equal. 
Thus, by Remark~\ref{rmk:DerDim&FibProd}, we have 
\begin{align*}
\ddim(\M \times^h_\N \M') & = \ddim(\M \times_\N \P \times_\N \M') \\
& = \ddim(\M) - \ddim(\N) + \ddim(\P) -\ddim(\N) + \ddim(\M') \\
& = \ddim(\M) + \ddim(\M') - \ddim(\N)\,.
\end{align*}
\end{pf}

\subsubsection*{Derived intersections}

As an application of homotopy fiber product, we consider the derived intersection of submanifolds. 

Let $M$ be a smooth manifold, and $X,Y$ be submanifolds of $M$. The \emph{derived intersection} $X \cap^h_M Y$ of $X$ and $Y$ in $M$ is understood as the homotopy fibered product $X \cap^h_M Y := X \times_M^h Y$ in the homotopy category of $L_\infty$-bundles. According to Section~\ref{sec:coam}, the quasi-smooth $L_\infty$-bundle $\P=(P_gM,P_\con TM\,dt,D)$ is a path space object of $M$. Thus, the derived intersection $X \cap^h_M Y$ is represented by the quasi-smooth $L_\infty$-bundle 
$$
X \times_M \P \times_M Y = (\baseDI,\totalDI,\strDI).
$$ 
Here, the base space $\baseDI = X \times_M P_g M \times_M Y$ is a space of short geodesic paths which start from a point in $X$ and end at a point in $Y$. Since $P_gM$ can be identified with an open neighborhood of the diagonal in $M \times M$, the base space $\baseDI$ can be regarded as an open submanifold of $X\times Y$
consisting of pairs of points $(x , y)\in X\times Y$
such that both $x$ and $y$ are sufficiently close to
the set-theoretical intersection $X \cap Y$. 
 For any $a \in \baseDI$, the fiber $\totalDI|_a$ is the space of covariant constant paths in $TM$ over the short geodesic $a$, i.e.
$$
\totalDI|_a = \{ \alpha\, dt \mid \alpha \in \Gamma(a^\ast TM), \; (a^\ast\cntnM) (\alpha) = 0 \} \,,
$$ 
and the section $\strDI: \baseDI \to \totalDI: a \mapsto a' \, dt$ is given by derivatives. 
The classical locus of $X \cap^h_M Y$ is the set-theoretical intersection $X \cap Y$. 
The virtual dimension of $X \cap^h_M Y$ is 
\begin{align*}
\ddim(X \cap^h_M Y) 
 & = \dim(X) + \dim(Y)- \dim(M) \,.
\end{align*}

\begin{lem}\label{lem:DerIntersectionTC}
The tangent complex of the $L_\infty$-bundle $(\baseDI,\totalDI,\strDI)$ at a classical locus $P \in X \cap Y$ is the two-term complex 
$$
\xymatrix{
TX|_P \oplus TY|_P \ar[r]^-{d_P} & TM \, dt|_P,
}
$$
where $d_P(v,w)=(w -v)\, dt$, and $dt$ is a formal variable of degree $1$.
\end{lem}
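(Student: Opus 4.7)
The plan is to reduce the statement to three separate computations: the tangent space $T\baseDI|_P$, the fiber $\totalDI|_P$, and the linearization of the section $\strDI$ at the constant path at $P$. Since $(\baseDI, \totalDI, \strDI)$ has amplitude one, Definition~\ref{def:tangentcomplex} tells us the tangent complex at $P$ is exactly the two-term complex $T\baseDI|_P \xrightarrow{D_P \strDI} \totalDI|_P$, with no higher terms to worry about.

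For the base, I would invoke Proposition~\ref{prop:PgM}: the map $\ev_0 \times \ev_1 : P_gM \to M \times M$ is an open embedding onto a neighborhood of the diagonal. Pulling back along the inclusions $X \hookrightarrow M$ and $Y \hookrightarrow M$, the fibered product $\baseDI = X \times_M P_gM \times_M Y$ is identified with an open neighborhood of $(P,P)$ in $X \times Y$, under which the constant path at $P$ corresponds to $(P,P)$. Hence $T\baseDI|_P \cong TX|_P \oplus TY|_P$. For the fiber, the covariant constant sections of $a^\ast TM$ over $I$ at the constant path $a \equiv P$ are canonically identified with $TM|_P$ via evaluation at $0$, so after the formal degree shift $\totalDI|_P \cong TM|_P\, dt$.

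The main step is computing $d_P = D_P \strDI$. For this I would switch to the complementary identification of $P_gM$ from diagram~\eqref{eq:PgM}, namely $\gamma \mapsto (\gamma(0), \gamma'(0))$, which realizes $P_gM$ as an open neighborhood $U$ of the zero section in $TM$. In this picture $\strDI$ is just the tautological section $(a, v) \mapsto v\, dt$, whose derivative at $(P, 0)$ is the projection $TM|_P \oplus TM|_P \to TM|_P$ onto the second factor. The change of coordinates back to the $M \times M$ picture is $(a, v) \mapsto (a, \exp^{\cntnM}_a(v))$, whose differential at $(P, 0)$ equals $(\dot a, \dot v) \mapsto (\dot a, \dot a + \dot v)$ because $d_0 \exp^{\cntnM}_P = \id_{TM|_P}$; its inverse sends $(v, w) \mapsto (v, w - v)$. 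Composing with projection onto the second factor yields $d_P(v, w) = (w - v)\, dt$, as claimed. The main bookkeeping issue will be reconciling the two descriptions of $P_gM$---the diagonal neighborhood in $M \times M$ versus the zero-section neighborhood in $TM$---which is essentially the content of the tubular neighborhood theorem underlying Proposition~\ref{prop:PgM}; once this is set up, the computation is immediate.
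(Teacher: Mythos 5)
Your proof is correct and follows essentially the same route as the paper: both identify $\baseDI$ with an open neighborhood of $(P,P)$ in $X\times Y$ via the tubular-neighborhood picture of $P_gM$, realize $\strDI$ as the tautological section in the $TM$ coordinates (equivalently $(x,y)\mapsto(\exp^{\cntnM}_x)^{-1}(y)\,dt$ in the $M\times M$ coordinates), and obtain $d_P(v,w)=(w-v)\,dt$ from the linearization using $d_{0}\exp^{\cntnM}_P=\id$. The only difference is bookkeeping: you differentiate the forward coordinate change $(a,v)\mapsto(a,\exp^{\cntnM}_a(v))$ and invert, while the paper directly differentiates $(\exp^{\cntnM})^{-1}$ and projects onto the vertical factor.
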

\begin{pf}
Choose an affine connection $\cntnM$ on $M$. Let $U$ be the open neighborhood of the diagonal in $M \times M$ which is diffeomorphic to $P_gM$ via the diagram \eqref{eq:PgM}. The base manifold $\baseDI = X \times_M P_g M \times_M Y \cong X \times_M U \times_M Y$ is naturally identified with an open neighborhood of $(X \cap Y) \times (X \cap Y)$ in $X \times Y$. With this identification, the vector bundle $\totalDI \to \baseDI$ is the pullback of the bundle $TM \, dt \to M$ via the map $\baseDI \hookrightarrow X \times Y \twoheadrightarrow X \hookrightarrow M$. The section $\strDI: \baseDI \to \totalDI$ is given by $\strDI|_{(x,y)} =  (\exp_x^{\cntnM})^{-1}(y)\, dt \in TM\, dt |_x $. 

Let $P \in X \cap Y$ be a classical locus of $X \cap^h_M Y$.  It is identified with $(P,P) \in \baseDI$ when $\baseDI$ is regarded as an open submanifold of $X \times Y$. It is simple to see that the composition 
$$
T(X\times Y)|_{(P,P)} \xrightarrow{((\exp^{\cntnM})^{-1})_\ast} T(TM)|_{0_P} \xrightarrow{\pr} T^\fiber_{0_P}(TM) \cong T M|_P
$$ 
sends $(v,w) \in TX|_P \oplus TY|_P$ to $w-v \in TM|_P$, where $(\exp^{\cntnM})^{-1}: X \times Y \to TM$ is the map sending $(x,y)$ to $(\exp^{\cntnM}_x)^{-1}(y)$. 
Hence we conclude that the tangent complex of $X\cap^h_M Y$ at $P$ is the desired two-term complex.
\end{pf}

\begin{prop}\label{prop:DerIntersection}
If  $X$ and $Y$ intersect transversally in $M$, then the map $X \cap Y \to X \cap^h_M Y$ induced by constant paths is a weak equivalence. 
\end{prop}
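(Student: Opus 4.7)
The plan is to verify directly the two defining properties of a weak equivalence of $L_\infty$-bundles: (i) bijection on classical loci, and (ii) quasi-isomorphism of tangent complexes at every classical point. For (i), the classical locus of $X\cap Y$, viewed as an $L_\infty$-bundle with zero graded part, is $X\cap Y$ itself. By the description of $X\cap^h_M Y=(\baseDI,\totalDI,\strDI)$ recalled just above the statement, its classical locus is the zero locus of $\strDI:a\mapsto a'\,dt$, i.e.\ the set of constant paths whose common endpoint lies in $X\cap Y$. The constant-path map identifies this with $X\cap Y$, giving (i) at once.

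For (ii), fix $P\in X\cap Y$. The tangent complex of $X\cap Y$ at $P$ is just $T(X\cap Y)|_P$ concentrated in degree $0$, while by Lemma~\ref{lem:DerIntersectionTC} the tangent complex of $X\cap^h_M Y$ at $P$ is the two-term complex
\[
TX|_P\oplus TY|_P\xrightarrow{d_P} TM\,dt|_P,\qquad d_P(v,w)=(w-v)\,dt.
\]
On base manifolds, the constant-path morphism factors through the diagonal $X\cap Y\hookrightarrow X\times Y$ (composed with the zero section of $P_gM$ over the diagonal), so the induced chain map in degree $0$ is the diagonal $\Delta:u\mapsto(u,u)$.

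All that remains is to feed in transversality, which is precisely the hypothesis tailored to the two things that need checking. Transversality at $P$ gives $TX|_P+TY|_P=TM|_P$, making $d_P$ surjective, so that $H^1$ of the target complex vanishes. It also gives $TX|_P\cap TY|_P=T(X\cap Y)|_P$, so $\ker d_P$ equals the diagonal image of $T(X\cap Y)|_P$, and hence $\Delta$ induces an isomorphism on $H^0$. Thus the chain map is a quasi-isomorphism, completing the verification. I do not foresee any genuine obstacle: once Lemma~\ref{lem:DerIntersectionTC} is in hand, the argument is a direct translation of transversality into the two cohomological conditions.
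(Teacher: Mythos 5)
Your proof is correct and takes essentially the same route as the paper's: both identify the constant-path map with the diagonal $X\cap Y\to\baseDI$, invoke Lemma~\ref{lem:DerIntersectionTC} for the tangent complex, and translate transversality into surjectivity of $d_P$ together with the identification of $\ker d_P$ with the diagonal image of $T(X\cap Y)|_P$. The only difference is that you spell out the $H^0$ step (kernel equals diagonal of $TX|_P\cap TY|_P = T(X\cap Y)|_P$), which the paper leaves implicit after noting surjectivity.
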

\begin{pf} 
Since $X$ and $Y$ intersect transversally in $M$, the intersection $X \cap Y$ is a submanifold of $M$. Let $(\baseDI,\totalDI,\strDI)$ be the model for $X \cap^h_M Y$ as in Lemma~\ref{lem:DerIntersectionTC}, where the base manifold $\baseDI$ is identified with an open submanifold of $X \times Y$. 
With this identification, the map of constant paths can be identified with the diagonal map $\Delta: X \cap Y \to \baseDI,$ $P \mapsto (P,P)$. 
Furthermore, the map $\Delta$ defines a morphism of $L_\infty$-bundles which induces a bijection on classical loci. 

The induced map on tangent complexes at $P \in X \cap Y$ is the vertical cochain map
$$
\xymatrix{
 T(X \cap Y)|_P \ar[r] \ar[d]^-{\Delta_\ast} & 0 \ar[d] \\
 TX|_P \oplus TY|_P \ar[r]^-{d_P} & TM\, dt |_P.
}
$$
Since $X$ and $Y$ intersect transversally, the map $d_P:TX|_P \oplus TY|_P \to TM|_P$ is surjective according to Lemma~\ref{lem:DerIntersectionTC}.  Thus, the vertical cochain map is a quasi-isomorphism at each classical locus $P \in X \cap Y$, as desired.
\end{pf}

Let $\pi:E \to M$ be a vector bundle (of degree zero) over $M$. One can naturally associate a section $s \in \Gamma(E)$ with two $L_\infty$-bundles: the derived intersection $M \cap^h_E s(M)$ and the quasi-smooth $L_\infty$-bundle  $(M,E\, dt, s\, dt)$. Here the symbol $M$ in $M \cap^h_E s(M)$ denotes the image of the zero section of $M$ in $E$, and the symbol $dt$ is a formal variable of degree $1$. 
It is natural to ask what the relation between these two $L_\infty$-bundles is. 
To compare them, we need the following  

\begin{lem}\label{lem:SpecialCntn}
Consider $E$ as a manifold. There exists an affine connection $\nabla^E$ on $E$ such that  
$$
\exp^{\nabla^E}_{0_P}(e) = e,
$$ 
where $e \in E|_P$ is identified with the corresponding vertical tangent vector in $TE|_{0_P}$. 
\end{lem}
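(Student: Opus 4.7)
The plan is to construct $\nabla^E$ so that every fiber $E|_P\subset E$ is a totally geodesic submanifold on which $\nabla^E$ restricts to the canonical flat affine connection of the vector space $E|_P$. Granting this, for each $e\in E|_P$ the straight line $\gamma:\rr\to E|_P\subset E$, $\gamma(t)=te$, is a $\nabla^E$-geodesic with $\gamma(0)=0_P$ and initial velocity $\dot\gamma(0)=e\in V_{0_P}\subset T_{0_P}E$, so that $\exp^{\nabla^E}_{0_P}(e)=\gamma(1)=e$ as desired.

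To build such a $\nabla^E$, first fix any affine connection $\nabla^M$ on $M$ together with any linear connection $\nabla$ on the vector bundle $E\to M$. The connection $\nabla$ determines an Ehresmann splitting $TE=H\oplus V$, where $V=\ker\pi_*$ is the vertical subbundle and $H$ is the horizontal complement. The map $\pi_*$ identifies $H\cong\pi^*TM$, while the canonical identification $T_e(E|_{\pi(e)})\cong E|_{\pi(e)}$ (valid because every fiber of $E$ is a vector space) yields $V\cong\pi^*E$. Combining these gives a canonical isomorphism $TE\cong\pi^*(TM\oplus E)$ of vector bundles over the manifold $E$.

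Now equip $TM\oplus E$ with the direct-sum linear connection $\nabla^M\oplus\nabla$, pull it back along $\pi:E\to M$ to a connection on $\pi^*(TM\oplus E)$, and transport it along the isomorphism $TE\cong\pi^*(TM\oplus E)$. The result is an affine connection $\nabla^E$ on the manifold $E$.

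Finally, I verify the geodesic formula directly. For $e\in E|_P$, the curve $\gamma(t)=te$ is contained in the single fiber $E|_P$, so $\pi\circ\gamma$ is the constant path at $P$, and its velocity $\dot\gamma$ is purely vertical. Under $V\cong\pi^*E$, the velocity corresponds, at every time $t$, to the constant section of $E|_P$ with value $e$. Since the pullback of any connection along a constant map is the trivial connection on a constant bundle, this constant section is parallel; equivalently $\nabla^E_{\dot\gamma}\dot\gamma=0$, and $\gamma$ is a geodesic. I do not anticipate a serious obstacle; the main technical care is simply in justifying the canonical identifications $H\cong\pi^*TM$ and $V\cong\pi^*E$ and the fact that the pullback of $\nabla^M\oplus\nabla$ along a constant map is trivial.
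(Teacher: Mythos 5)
Your construction is correct, but it follows a genuinely different route from the paper's. The paper works locally: it trivializes $E$ over a cover $\{U_\lambda\}$, takes on each $U_\lambda\times V$ the product-type connection whose only nonzero terms are the horizontal ones coming from an affine connection on $U_\lambda$, and glues these by a partition of unity pulled back along $\pi$; it then checks directly that $\nabla^E_XY=0$ whenever $X,Y$ are vertical lifts of sections of $E$, so the rays $t\mapsto te$ are geodesics. You instead build the connection globally and coordinate-freely: a linear connection $\nabla$ on $E$ gives the splitting $TE\cong\pi^\ast(TM\oplus E)$, and you transport the pullback of $\nabla^M\oplus\nabla$ through this isomorphism; the geodesic property of $t\mapsto te$ then follows because $\pi\circ\gamma$ is constant, so the induced connection along $\gamma$ is the trivial one and the velocity is the constant section $(0,e)$. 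Both arguments are complete for the purpose of the lemma (the fibers become totally geodesic with their flat affine structure, and the geodesic through $0_P$ with vertical initial velocity $e$ is defined up to time $1$). The paper's gluing argument is more elementary, using only local coordinates and the fact that affine combinations of connections are connections; your argument avoids partitions of unity and has the added benefit that $\nabla^E$ is canonically determined by the pair $(\nabla^M,\nabla)$ already chosen elsewhere in the paper, which makes the resulting exponential map easier to relate to the horizontal/vertical decomposition if one wanted finer compatibility statements. The one step you leave implicit --- that pulling back your $\nabla^E$ along $\gamma$ agrees with pulling back $\nabla^M\oplus\nabla$ along $\pi\circ\gamma$, and that the pullback of a connection along a constant map is the trivial connection on the constant bundle --- is standard and unproblematic.
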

\begin{pf}
Let $\{U_\lambda\}_{\lambda \in I}$ be an open cover of $M$ which trivializes the vector bundle $E$: $E|_{U_\lambda} \cong U_\lambda \times V$. Choose an affine connection $\nabla^{U_\lambda}$ on $U_\lambda$, and extend it to an affine connection $\nabla^\lambda$ on $U_\lambda \times V$ by 
$$
\nabla^\lambda_{\frac{\partial}{\partial x^i}} \frac{\partial}{\partial x^j} = \nabla^{U_\lambda}_{\frac{\partial}{\partial x^i}} \frac{\partial}{\partial x^j}, \qquad \nabla^\lambda_{\frac{\partial}{\partial x^i}} \frac{\partial}{\partial \xi^j} = \nabla^\lambda_{\frac{\partial}{\partial \xi^i}} \frac{\partial}{\partial x^j} = \nabla^\lambda_{\frac{\partial}{\partial \xi^i}} \frac{\partial}{\partial \xi^j} = 0,
$$
where $\{x^i\}$ is a local coordinate system on $U_\lambda$, and $\{\xi^j\}$ is a basis for $\dual{V}$ regarded as a coordinate system on $V$.  Let $\mu_\lambda$ be a partition of unity on $M$ subordinate to the open cover $\{U_\lambda\}_{\lambda \in I}$. By pulling back it to $E$, we have the partition of unity $\varphi_\lambda := \mu_\lambda \circ \pi$ on $E$. Define an affine connection $\nabla^E$ on $E$ by 
$$
\nabla^E = \sum \varphi_\lambda \nabla^{\lambda}.
$$
It is easy to show that 
$\nabla^E_X Y =0$ for any $X,Y \in\Gamma(E)$ regarded as vertical vector fields on $E$. Thus, the paths $a(t) = 0_P + t e$, $e \in E|_P$, are geodesics, and $\exp^{\nabla^E}_{0_P}(e) = e$.
\end{pf}

\begin{prop}
Let $E$ be a vector bundle over $M$, and $s$ a section of $E$. The derived intersection $M \cap^h_E s(M)$ is weakly equivalent to the quasi-smooth $L_\infty$-bundle $(M,E\, dt, s\, dt)$.  
\end{prop}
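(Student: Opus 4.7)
The plan is to use the special affine connection $\nabla^E$ on $E$ furnished by Lemma~\ref{lem:SpecialCntn}, whose exponential map satisfies $\exp^{\nabla^E}_{0_P}(e)=e$, so that the straight-line path $a_P(t):=t\cdot s(P)\in E|_P$ is a genuine $\nabla^E$-geodesic from $0_P$ to $s(P)$. Together with an arbitrary linear connection on $TE$, this lets me model $M\cap^h_E s(M)$ by the quasi-smooth $L_\infty$-bundle $(\baseDI,\totalDI,\strDI)$ of Lemma~\ref{lem:DerIntersectionTC}. The open neighborhood of the zero section of $TE$ used to define $P_gE$ in Proposition~\ref{prop:PgM} is at our disposal; I take it large enough to contain every vertical vector $(0,s(P))\in TE|_{0_P}$, so that $P\mapsto a_P$ defines a smooth map $\Psi_0:M\to\baseDI\subset M\times s(M)$ whose image lies along the diagonal.

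I would then promote $\Psi_0$ to a morphism $\Psi:(M,E\,dt,s\,dt)\to(\baseDI,\totalDI,\strDI)$ by defining its linear (degree one) component
$$
\Psi_1:E|_P\,dt\longrightarrow\Gamma_\con\big(I,a_P^\ast TE\big)\,dt,\qquad e\longmapsto \text{the covariant-constant section with initial value }(0,e),
$$
where $(0,e)\in TM|_P\oplus E|_P = TE|_{0_P}$ is taken in the canonical splitting along the zero section. Since both source and target are quasi-smooth, all higher components $\Psi_k$ (for $k\geq 2$) vanish for degree reasons, and the $L_\infty[1]$-morphism axioms collapse to the single curvature identity $\Psi_1(s(P)\,dt)=\strDI(\Psi_0(P))=a_P'\,dt$, which holds because along the vertical straight line $a_P$ the velocity $a_P'(t)$ is the constant vertical vector $s(P)$.

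To conclude that $\Psi$ is a weak equivalence, note first that both classical loci coincide with the zero set $Z(s)\subset M$, and $\Psi_0$ restricts to the identity on $Z(s)$. At $P\in Z(s)$, the source tangent complex is $[TM|_P\xrightarrow{D_Ps}E|_P]$, while by Lemma~\ref{lem:DerIntersectionTC} the target tangent complex is $[TM|_P\oplus Ts(M)|_{s(P)}\xrightarrow{d_P}TE\,dt|_{0_P}]$ with $d_P(v,w)=(w-v)\,dt$. Under the splitting $TE|_{0_P}\cong TM|_P\oplus E|_P$, the differential $s_\ast:TM|_P\to TE|_{s(P)}=TE|_{0_P}$ at the zero $P$ of $s$ is $s_\ast(u)=(u,D_Ps(u))$, so the target differential becomes $(v,s_\ast u)\mapsto(u-v,D_Ps(u))$. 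A direct diagram chase shows that both the kernel and the cokernel of $d_P$ are canonically isomorphic to those of $D_Ps$, and that the map on cohomology induced by $\Psi$ is the identity; hence $\Psi$ is a quasi-isomorphism on tangent complexes at every classical point.

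The main technical point will be the careful identification $s_\ast(u)=(u,D_Ps(u))$ at a zero of $s$ via the zero-section splitting of $TE$, together with the diagram chase producing the cohomology isomorphisms. The only minor subtlety, arranging $P_gE$ to be large enough for $\Psi_0$ to be defined on all of $M$, is handled by the flexibility in Proposition~\ref{prop:PgM} (and, if needed, by first replacing $E$ with an open tubular neighborhood of its zero section).
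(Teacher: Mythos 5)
Your construction is exactly the paper's proof: the same connection $\nabla^E$ from Lemma~\ref{lem:SpecialCntn}, the same model $(\baseDI,\totalDI,\strDI)$ from Lemma~\ref{lem:DerIntersectionTC}, the same morphism (base map $P\mapsto \big(t\mapsto t\cdot s(P)\big)$, linear part sending $e\,dt$ to the covariant constant lift of the vertical vector $(0,e)\in TE|_{0_P}$, no higher terms), and the same tangent-complex comparison using the identification $s_\ast u=(u,D_Ps(u))$ at zeros of $s$ to see the vertical map is a quasi-isomorphism. The one point you flag --- choosing $P_gE$ so that it actually contains the geodesics $t\mapsto t\cdot s(P)$ --- is left implicit in the paper as well, so in substance your proposal coincides with the paper's argument.
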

\begin{pf}
Let $(\baseDI,\totalDI,\strDI)$ be a representation of $M \cap^h_{ E} s(M)$ associated with the connection $\nabla^E$ as in Lemma~\ref{lem:SpecialCntn}. 
Consider the morphism of $L_\infty$-bundles
\begin{equation}\label{eq:WkEqBwTwoQSmooth}
(f,\phi):(M,E\, dt, s\, dt) \to (\baseDI,\totalDI,\strDI),
\end{equation}
where $f$ sends $P \in M$ to the short geodesic $a(t) = 0_P + t \cdot s(P)$ in $E$, and the bundle map $\phi:E \, dt \to \totalDI$ sends $e\, dt \in E\, dt|_P$ to the covariant constant path $\alpha$ over $a$ satisfying $\alpha(0) = e\, dt$.  Here we identify $e \in E|_P$ with the corresponding vertical tangent vector in $TE |_{0_P}  \cong TM|_P \oplus  E|_P$.

By Lemma~\ref{lem:DerIntersectionTC}, the tangent complex of $(\baseDI,\totalDI,\strDI)$ at a classical locus $P \in Z( s)$, is the two-term complex 
$$
\xymatrix{
TM|_P \oplus TM|_P \cong TM|_P \oplus T \big(s(M)\big)|_P  \ar[r]^-{d_P} & TE\, dt |_{0_P} \cong  TM \, dt |_P \oplus  E\, dt|_P,
}
$$ 
where $d_P(v,w) = \big((w-v)\, dt, (D_Ps)(w)\, dt \big)$ for $v,w \in TM|_P$. 
Furthermore, the induced morphism of the $L_\infty$-bundle morphism $(f,\phi)$ in \eqref{eq:WkEqBwTwoQSmooth} on the tangent complexes is the vertical cochain map 
$$
\xymatrix{
TM|_P \ar[r]^-{D_P s\, dt} \ar[d]_-{f_\ast} & E\, dt|_P \ar[d]^-{\phi} \\
 TM|_P \oplus TM|_P \ar[r]_-{d_P} & TM \, dt |_P \oplus  E\, dt|_P,
}
$$ 
where  $f_\ast(v)= (v,v)$ and $\phi(e \, dt) = (0 , e \, dt)$. 
With these formulas, it is clear that the morphism $(f,\phi):(M,E\, dt,s\, dt) \to (\baseDI,\totalDI,\strDI)$ is a weak equivalence. 
\end{pf}

See \cite{2023arXiv231216622S} for further discussions on the intersection of $M$ and $s(M)$.

\begin{rmk}
We define an {\it orientation} of a quasi-smooth $L_\infty$-bundle $(M,E,s)$ with classical locus $X$ to consist of orientations of both $M$ and $E$, i.e. Thom classes for $TM$ and $E$.  As explained in \cite{MR609831}, such Thom classes give rise to bivariant classes for $0:M \to E$ and for $M \to \ast$.  Pulling back the bivariant class for $0:M \to E$ via the section $s$ to a bivariant class for $X \to M$, and composing with the class for $M \to \ast$, we get a bivariant class for $X \to \ast$, in other words a Borel-Moore homology class of $X$.  This Borel-Moore homology class is the {\it virtual fundamental class} of $(M,E,s)$.  It is a class of degree $\ddim (M,E,s) = \dim M - \rk E$. It is also invariant under oriented weak equivalences (i.e. weak equivalences compatible with the orientations).  

If $X$ is compact, and the virtual dimension of $(M,E,s)$ is zero, we can push the virtual fundamental class forward to $\ast$, and obtain an integer, the {\it virtual number of points} of $(M,E,s)$. 

We can generalize the above considerations to the relative situation, where $S$ is a manifold, and the quasi-smooth $L_\infty$-bundle $(M,E,s)$ is endowed with a submersion $M \to S$.  An orientation is now a pair of a Thom class for $E$, and one for the relative tangent bundle $TM/S$.  We obtain a relative virtual fundamental class, which is a bivariant class for the morphism $X \to S$.  It will pull back to the virtual fundamental classes of the various fibres.  If $X \to S$ is proper, and $S$ is connected, it will follow that the virtual number of points of all fibres are equal. 

In particular, we see that derived intersections, if their classical loci are compact, have a well-defined intersection number, the virtual number of points of the associated quasi-smooth $L_\infty$-bundle.  This intersection number will be constant in smooth connected families, as long as the classical intersection remains compact in the family.
\end{rmk}

\appendix

\section{Path space construction}
\label{sec:AKSZ}
In this section, we present constructions
of infinite dimensional dg manifolds and $L_\infty$-bundles
 of path spaces. 
The discussion here aims to give a heuristic
reasoning as to where the curved $L_\infty[1]$-structure formula
in Proposition \ref{pro:paris} comes from.

We will not address the subtle issue such as in which
sense a path space is  an  infinite dimensional smooth
manifold. It can be understood as a smooth manifold in the sense of \cite{MR583436}.

\subsection{Twisted shifted tangent $L_\infty$-bundles}

Let $\M = (M,L,\lambda)$ be an $L_\infty$ bundle. We start with a curved $L_\infty$ structure on $TL[-1]$ which is needed later.   
Recall that a  degree $0$  vector field $\tau\in  \XX(L)$  is said to be
{\it  linear} \cite{MR1617335} if  $\tau$ is projectible, i.e.
$\pi_* (\tau)=X\in \XX(M)$ is a well-defined
vector field on $M$, and, furthermore, the diagram
$$
\xymatrix{
L \ar[d] \ar[r]^{\tau} & TL \ar[d] \\
M \ar[r]_{X} & TM
}
$$
is a morphism of vector bundles. Geometrically, linear vector fields
correspond exactly  to those whose flows are automorphisms
 of the vector bundle $L$.

Choose a linear connection $\cntnL$ on $L$. Given a linear vector field  $\tau\in  \XX(L)$, 
consider the map $ \tilde{\delta}  : L\to L[-1]$
defined by
$$\tilde{\delta} (l)=\tau|_l -\widehat{X(\pi (l))}|_l, \  \ \ \forall l\in L,$$
where $\widehat{X(\pi (l))}|_l\in TL|_l $ denotes the horizontal lift of $X(\pi (l))\in TM|_{\pi (l)}$ at $l$.

\begin{lem}
\label{lem:Rome}
The map $\tilde{\delta}  : L\to L[-1]$ is a bundle map.
\end{lem}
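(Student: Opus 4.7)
The plan is to verify the two conditions for $\tilde\delta$ to be a graded vector bundle morphism $L \to L[-1]$ covering $\id_M$: (i) the expression $\tau|_l - \widehat{X(\pi(l))}|_l$ lies in the vertical subspace of $T_l L$ (so that, via $V_l L \cong L|_{\pi(l)}$ and the standard shift for the tangent bundle, it defines an element of $L[-1]|_{\pi(l)}$); and (ii) the resulting assignment $l \mapsto \tilde\delta(l)$ is fibrewise linear.

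For (i), I would observe that both summands project to the same vector under $\pi_*$. Indeed, $\pi_*(\tau|_l) = X(\pi(l))$ because $\tau$ is projectible with $\pi_*\tau = X$, while $\pi_*\bigl(\widehat{X(\pi(l))}|_l\bigr) = X(\pi(l))$ by the defining property of the horizontal lift determined by $\cntnL$. Hence $\pi_*\bigl(\tilde\delta(l)\bigr) = 0$, so $\tilde\delta(l) \in V_l L$. Identifying $V_l L$ with $L|_{\pi(l)}$ (shifted to $L[-1]|_{\pi(l)}$) yields a well-defined map of total spaces covering $\id_M$.

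The substantive step is (ii). The key input is that both $\tau$ and the horizontal lift $\widehat X$ are linear vector fields on $L$. For $\tau$ this is the hypothesis; for $\widehat X$ it is a consequence of $\cntnL$ being a \emph{linear} connection, since parallel transport along a curve in $M$ is then a linear isomorphism of fibres, so its infinitesimal generator $\widehat X$ has flow consisting of vector bundle automorphisms. Since the class of linear vector fields is closed under sums (equivalently, it is precisely the kernel of $[E,\cdot]$ for the Euler vector field $E$), the difference $\tilde\delta = \tau - \widehat X$ is again linear. Combined with $\pi_*\tilde\delta = 0$ from (i), $\tilde\delta$ is therefore a \emph{vertical} linear vector field on $L$. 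Such vector fields correspond bijectively to $\mathcal{O}_M$-linear endomorphisms of $\Gamma(L)$, i.e.\ to graded bundle maps $L \to L$ covering $\id_M$; accounting for the shift delivers the asserted bundle map $L \to L[-1]$.

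If one prefers to bypass the intrinsic argument, a direct verification in local coordinates suffices: writing $\tau = X^i(x)\partial_{x^i} + A^a_b(x)\,y^b\partial_{y^a}$ and $\widehat X = X^i\bigl(\partial_{x^i} - \Gamma^a_{ib}(x)\,y^b\partial_{y^a}\bigr)$ in terms of the connection symbols $\Gamma^a_{ib}$ of $\cntnL$, one obtains
$$\tilde\delta = \bigl(A^a_b(x) + X^i(x)\Gamma^a_{ib}(x)\bigr)\,y^b\,\partial_{y^a},$$
which is manifestly linear in the fibre coordinates $y^b$. The only point requiring any real care is the identification of horizontal lifts with respect to a linear connection as linear vector fields; everything else is a direct unwinding of definitions.
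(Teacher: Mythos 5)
Your proof is correct, and it reaches the same essential point as the paper — that horizontal lift with respect to a linear connection depends linearly on the fibre variable — but packages it differently. The paper argues directly at the level of a single fibre: it realizes $\widehat{X(\pi(l))}|_l$ as $\gamma'(l,0)$, the velocity of parallel transport of $l$ along an integral curve of $X$, and uses linearity of parallel transport to get $\gamma'(l+\tilde l,0)=\gamma'(l,0)+\gamma'(\tilde l,0)$, whence additivity of $\tilde\delta$ (additivity of $\tau$ over fibrewise addition being implicit in its linearity). You instead make the verticality of $\tilde\delta(l)$ explicit (both terms push forward to $X(\pi(l))$ under $\pi_*$), note that $\tau$ and $\widehat X$ are both linear vector fields — the latter exactly because parallel transport for the linear connection $\cntnL$ is fibrewise linear, which is the same input as the paper's computation — and then conclude via the standard dictionary: a vertical linear vector field on $L$ is the same thing as a bundle endomorphism of $L$ covering $\id_M$. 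Your coordinate formula $\tilde\delta=\bigl(A^a_b+X^i\Gamma^a_{ib}\bigr)y^b\,\partial_{y^a}$ gives an independent, completely explicit check and in addition records what $\tilde\delta$ is, not just that it is linear. What the paper's route buys is brevity and self-containedness (no appeal to the Euler-field characterization or to the correspondence between vertical linear vector fields and endomorphisms); what your route buys is that the verticality step, which the paper leaves tacit in asserting that $\tilde\delta$ lands in $L[-1]$, is verified, and the argument is organized so that the only nontrivial ingredient (linearity of $\widehat X$) is isolated and reusable. Both are complete proofs of the lemma.
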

\begin{pf}
 Let $\alpha$ be a path in $M$ such that $\alpha(0) = \pi(l), \alpha'(0) 
= X(\pi(l))$, and let $\gamma(l,t)$ be the parallel transport of $l$
 along $\alpha(t)$. Then the horizontal lift of $X$ at $l$ is
 $\gamma'(l,0)$. Since the parallel transport
 $L|_{\alpha(0)} \to L|_{\alpha(t)}: l \mapsto \gamma(l,t)$
 is linear at each $t$, the horizontal lift of $X$ at $l+\tilde l$ is
 $\gamma'(l + \tilde l,0) = \gamma'(l,0) + 
\gamma'(\tilde l,0)$.
Hence $\tilde \delta(l+\tilde l) = \tau(l+\tilde l) - ( \gamma'(l,0) + \gamma'(\tilde l,0)) = \tilde\delta(l)+ \tilde\delta(\tilde l)$.
\end{pf}

 Consider the dg manifold 
$$(TL[-1], \ \  \iota_\tau)$$
Again, a priori, this dg manifold is not an $L_\infty$-bundle over $M$.
However, if $\tau$ is a linear vector field on $L$,
 a linear connection $\cntnL$ on
$L$ will make it into an $L_\infty$-bundle.

The following proposition follows from a straightforward
verification, which is left to the reader.

\begin{prop}
\label{pro:TM2}
Let $L$ be a positively graded vector bundle, and $\tau \in \calx (L)$
a linear   vector field on $L$. Then any linear connection $\cntnL$ on $L$ 
induces an $L_\infty$-bundle structure on $(M, TM\, dt\oplus L\, dt\oplus L, \nu)$,
where
$$\nu=X+{\delta}. $$
Here $X=\pi_* (\tau) \in \Gamma (M, TM \, dt\, )$ is the curvature, and 
${\delta}$ is the degree 1 bundle map on $TM\, dt\oplus L\, dt\oplus L$
naturally extending the bundle map $ \tilde{\delta}: L\to L\, dt$
as in Lemma \ref{lem:Rome}.

Moreover,  different choices of linear connections $\cntnL$ on $L$ induce
isomorphic $L_\infty$-bundle  structures on $TM\, dt\oplus L\, dt\oplus L$.
\end{prop}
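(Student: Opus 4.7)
The plan is to mirror the proof of Proposition~\ref{pro:TM1}, replacing the Lie derivative $\LL_Q$ there by the contraction $\iota_\tau$ here. First, I would use the linear connection $\cntnL$ to identify $TL[-1]$ with the graded vector bundle $TM\, dt\oplus L\, dt\oplus L$ over $M$, via the diffeomorphism $\phi^{\cntnL}$ of~\eqref{eq:phinabla}. Transferring $\iota_\tau$ through $\phi^{\cntnL}$ yields a degree $+1$ derivation on the algebra of functions of $TM\, dt\oplus L\, dt\oplus L$.

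To show that this derivation defines an $L_\infty$-bundle structure, I would check that it is tangent to the fibers, i.e.\ that it annihilates $\O_M$. This is automatic, since the contraction $\iota_\tau$ vanishes on $0$-forms: $\iota_\tau(\pi^\ast f)=0$ for all $f\in\O_M$. I would then compute the transferred derivation on the three types of generating sections of $\Sym_\A(\Omega^1_\A[1])$, in parallel with the proof of Proposition~\ref{pro:TM1}:
\begin{items}
\item On $\xi\in\Gamma(M,L^\vee)$, viewed as a fiberwise polynomial function on $L$, the contraction gives $\iota_\tau(\xi)=0$. In particular, no higher brackets appear, in sharp contrast with the Lie derivative case where $\LL_Q(\xi)$ produces the entire sequence $\{\lambda_k\}$.
\item On a shifted vertical coordinate $\eta[1]$ with $\eta\in\Gamma(M,L^\vee)$, the contraction pairs $\eta$ against the vertical part $\tau-\hat X$ of $\tau$, which by Lemma~\ref{lem:Rome} is exactly the bundle map $\tilde\delta:L\to L\, dt$; this dualizes to the twisted differential $\delta$.
\item On a shifted horizontal form $\pi^\ast\theta[1]$ with $\theta\in\Omega^1(M)$, the contraction yields $\theta(\pi_\ast\tau)=\theta(X)\in\O_M$, which dualizes to the curvature $X\in\Gamma(M,TM\, dt)$.
\end{items}
Hence the transferred structure is $\nu=X+\delta$. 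The Maurer-Cartan equation $\nu\circ\nu=0$ follows from $\iota_\tau^2=0$, which is automatic for contraction against a single vector field, and reduces, in the absence of higher brackets, to the identities $\delta(X)=0$ and $\delta\circ\delta=0$; both hold since $\delta$ vanishes on $TM\, dt\oplus L\, dt$ under the natural extension.

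For the second assertion, given two linear connections $\cntnL$ and $\bar\nabla$ with difference tensor $\alpha(v,x)=\bar\nabla_v x-\cntnL_v x$, I would construct an explicit isomorphism of $L_\infty$-bundles in the spirit of~\eqref{eq:CanonicalIsoTM[-1]}, based on the fact that the two induced decompositions of $\tau$ into horizontal and vertical components differ by the term $\alpha(X,\cdot)$. Since both source and target carry only a curvature and a degree-$1$ linear operation, the $L_\infty[1]$-morphism axioms collapse to a short list of component identities, all of which follow directly from this difference formula.

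The only delicate point will be maintaining consistent bookkeeping of the degree shifts when translating the $-1$-shift on forms into a $+1$-shift of the transferred fiberwise operations; once this is set up coherently, the argument is a direct and strictly simpler analogue of the proof of Proposition~\ref{pro:TM1}, as no higher brackets arise.
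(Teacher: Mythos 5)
Your proposal is correct and follows exactly the route the paper intends: the paper leaves Proposition~\ref{pro:TM2} as a ``straightforward verification'' modeled on the proof of Proposition~\ref{pro:TM1}, and your computation of $\iota_\tau$ on the three types of generators (yielding no output in $L$, the dual of $\tilde\delta$ on $\Gamma(M,L^\vee[1])$, and the curvature $X$ from $\Omega^1(M)[1]$), together with the change-of-connection isomorphism via the difference tensor $\alpha$ as in \eqref{eq:CanonicalIsoTM[-1]}, is precisely that verification. No gaps.
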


Below is the situation
which is a combination of Proposition \ref{pro:TM1} and Proposition \ref{pro:TM2}. 
\begin{prop}
\label{pro:TM3}
Let $\M=(M,L,\lambda)$ be an $L_\infty$-bundle with $Q$ 
being its corresponding homological vector field.
Let $\tau \in \calx (L)$ be a degree $0$ linear  vector field on
 $L$ such that
$\pi_* (\tau)=X\in \calx (M)$.
Assume that 
\begin{equation}
\label{eq:tauQ}
[\tau , \ Q]=0.
\end{equation}
Choose a linear connection $\cntnL$ on $L$. Then there is an induced
  $L_\infty$-bundle structure on $(M, TM\, dt\oplus L\, dt\oplus L, \nu+\mu)$,
 where $\mu$ and $\nu$ are described as in Proposition \ref{pro:TM1} and Proposition \ref{pro:TM2}, respectively.

Moreover,  different choices of linear connections $\cntnL$ on $L$ induce
isomorphic $L_\infty$-bundle  structures on $TM\, dt\oplus L\, dt\oplus L$.
\end{prop}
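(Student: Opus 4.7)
The plan is to realize the combined $L_\infty$-bundle as arising from a single dg manifold structure on $TL[-1]$ whose homological vector field cleanly splits into the piece giving $\mu$ and the piece giving $\nu$. Explicitly, I would consider the degree $+1$ derivation $\LL_Q + \iota_\tau$ on the sheaf $\Sym_\A(\Omega^1_\A[1])$ of functions on $TL[-1]$, where $\A = \Sym_{\O_M} L^\vee$ is the sheaf of functions on $\M$; note that $\iota_\tau$ acts with degree $+1$ on $\Omega^1_\A[1]$ because $\tau$ has degree $0$ and the shift $[1]$ accounts for the missing degree. The first step is to verify that $\LL_Q + \iota_\tau$ is homological by expanding
\begin{equation*}
[\LL_Q + \iota_\tau,\, \LL_Q + \iota_\tau] \;=\; [\LL_Q, \LL_Q] \,+\, 2[\LL_Q, \iota_\tau] \,+\, [\iota_\tau, \iota_\tau].
\end{equation*}
The three terms vanish for, respectively, the following reasons: $Q$ is homological on $\M$; the graded Cartan identity gives $[\LL_Q, \iota_\tau] = \iota_{[Q, \tau]} = 0$ by the hypothesis \eqref{eq:tauQ}; and $\iota_\tau^2 = 0$ since contracting twice against the same vector field produces zero.

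Next, having the dg manifold $(TL[-1], \LL_Q + \iota_\tau)$ in hand, I would pass to the trivialization using a linear connection $\cntnL$ on $L$ via the diffeomorphism $\phi^{\cntnL}$ of \eqref{eq:phinabla}, exactly as in the proofs of Propositions~\ref{pro:TM1} and~\ref{pro:TM2}. Since $\pi_\ast Q = 0$ and $\pi_\ast \tau = X$, and both $\LL_Q$ and $\iota_\tau$ are shown in those propositions to transport to fiberwise structures, so does their sum; this gives the structure of a bundle of curved $L_\infty[1]$-algebras on $TM\,dt \oplus L\,dt \oplus L$ over $M$. The associated operations are computed by evaluating the transported vector field on the three families of generating functions in $\Omega^1(M)[1]$, $\Gamma(M, L^\vee[1])$ and $\Gamma(M, L^\vee)$; because this computation is linear in the vector field, the contributions from $\LL_Q$ and $\iota_\tau$ add independently and are precisely $\mu$ and $\nu$ as in the two earlier propositions. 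Hence the combined operations equal $\mu + \nu$.

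For the canonicality statement, I would reuse the explicit isomorphism $\Phi^{\bar\nabla, \cntnL}$ from the last part of the proof of Proposition~\ref{pro:TM1}. The key observation is that this isomorphism was built intrinsically from the change-of-trivialization diffeomorphism $\phi^{\bar\nabla} \circ (\phi^{\cntnL})^{-1}$ on the graded manifold $TL[-1]$, expressed in $L_\infty$ language. Since $\LL_Q + \iota_\tau$ is an intrinsic object on $TL[-1]$ independent of the connection, the same $\Phi^{\bar\nabla, \cntnL}$ must intertwine the combined operations $\mu^{\cntnL} + \nu^{\cntnL}$ and $\mu^{\bar\nabla} + \nu^{\bar\nabla}$.

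The hardest part will be verifying the last claim at the level of explicit formulas: one must check the defining equation of an $L_\infty$-morphism for $\Phi^{\bar\nabla, \cntnL}$ against the combined operations $\mu + \nu$, where the extra terms involving $\nu_0 = X$ and $\nu_1 = \delta$ appear. Since $\delta$ depends on the horizontal lift with respect to the connection, the discrepancy between $\delta^{\cntnL}$ and $\delta^{\bar\nabla}$ must be exactly absorbed by the quadratic piece $\Phi^{\bar\nabla, \cntnL}_2$ via the deflection tensor $\alpha = \bar\nabla - \cntnL$. The intrinsic argument guarantees this cancellation, but making it explicit requires some bookkeeping analogous to that carried out in the proof of Proposition~\ref{pro:TM1}.
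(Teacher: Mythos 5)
Your proposal is correct and follows essentially the same route as the paper: realize the structure via the homological vector field $\iota_\tau+\LL_Q$ on $TL[-1]$ (its square vanishing by the Cartan identity $[\LL_Q,\iota_\tau]=\iota_{[Q,\tau]}=0$ together with $[\LL_Q,\LL_Q]=0$ and $\iota_\tau^2=0$), then transport through $\phi^{\cntnL}$ so that the operations split as $\mu+\nu$, with canonicality inherited from the connection-independence of $\iota_\tau+\LL_Q$ and the isomorphism $\Phi^{\bar\nabla,\cntnL}$ of Proposition~\ref{pro:TM1}. This matches the paper's proof, which likewise reduces the remaining verifications to those of Propositions~\ref{pro:TM1} and~\ref{pro:TM2}.
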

\begin{pf}
Consider the graded manifold $TL[-1]$. Both $\iota_\tau$  and $\LL_Q$
are homological vector fields on $TL[-1]$. Now
\begin{eqnarray*}
[\iota_\tau+\LL_Q, \ \iota_\tau+\LL_Q]
&=&2[\iota_\tau, \LL_Q]\\
&=&\iota_{[\tau, Q]}\\
&=&0.
\end{eqnarray*}
Therefore $\big( TL[-1], \iota_\tau+ \LL_Q \big)$ is indeed
 a dg manifold.  Choosing a linear connection $\cntnL$ on $L$,  one can identify $TL[-1]$
with $TM\, dt\oplus L\, dt\oplus L$ via the map $\phi^{\cntnL}$ as
in \eqref{eq:phinabla}.
The rest of the proof follows exactly from that of
Proposition \ref{pro:TM1} and Proposition \ref{pro:TM2}.
\end{pf} 

\begin{rmk}
\label{rmk:tauQ}
Note that $\tau$ is a degree $0$ vector field on $L$, while
$Q$ is of degree $1$. The commutator $[\tau , \ Q]$ is a
vector field of degree $1$. By $\phi_t$, we denote the (local)
flow on $L$   generated by $\tau$. Then \eqref{eq:tauQ}
is equivalent to saying that $\phi_t$ is a family of (local)
isomorphisms  of the dg manifold $(\M, Q)$. 
Since  $\tau$ is a linear vector field,
$\phi_t$ is a family automorphisms of the vector bundle $L$.
Then \eqref{eq:tauQ} is equivalent to saying that 
the  (local) flow $\phi_t$ is a family of isomorphisms  of the $L_\infty$-bundle
$\M=(M,L,\lambda)$.
\end{rmk}

\subsection{Mapping spaces: algebraic approach}

Let $L = \bigoplus_{i=1}^n L^i$ be a graded vector bundle over a manifold $M$, and $\M = (M, \A)$ the associated graded manifold of amplitude $[1,n]$. For any $P \in M$, $l \in L|_P$, one has the evaluation map 
$$
\ev_l: \Gamma(M,\A) \xrightarrow{\ev_P} \Sym L^\vee|_P \to \rr,
$$
where the second map is the evaluation of a polynomial on $L|_P$ at $l$.

\begin{lem}
The evaluation map defines a bijection from $L$ to the algebra morphisms (which may not preserve degrees) from $\Gamma(M,\A)$ to $\rr$.  
\end{lem}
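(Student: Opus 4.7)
The plan is to peel off the bijection in two stages, matching the two pieces of data comprising a point of $L$: first extract the base point $P\in M$ from $\phi$, then recover the fiber element $l\in L|_P$ by duality.

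First I would restrict an arbitrary algebra morphism $\phi:\Gamma(M,\A)\to \rr$ to the $\rr$-subalgebra $C^\infty(M)=\Gamma(M,\O_M)\subset \Gamma(M,\A)$. This gives an $\rr$-algebra morphism $C^\infty(M)\to \rr$. By the classical theorem (going back to Milnor) for Hausdorff, second countable smooth manifolds, every such morphism is evaluation at a unique point $P\in M$. The second countable / Hausdorff hypothesis, which is built into the paper's notion of manifold, enters here via partitions of unity and bump functions.

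Next I would localize $\phi$ at $P$. For any $f\in C^\infty(M)$ with $f(P)=0$ and any $\alpha\in \Gamma(M,\A)$, multiplicativity of $\phi$ gives $\phi(f\alpha)=\phi(f)\phi(\alpha)=0$. Hence $\phi$ vanishes on the ideal $\mathfrak{m}_P\cdot\Gamma(M,\A)$, where $\mathfrak{m}_P\subset C^\infty(M)$ is the maximal ideal at $P$. Since $\A=\Sym_{\O_M}L^\vee$ is locally free over $\O_M$, the quotient $\Gamma(M,\A)/\bigl(\mathfrak{m}_P\cdot\Gamma(M,\A)\bigr)$ is canonically the fiber $\A|_P = \Sym L^\vee|_P$, and therefore $\phi$ factors through an algebra morphism $\bar\phi:\Sym L^\vee|_P\to \rr$.

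Finally I would invoke the universal property of the symmetric algebra. The restriction of $\bar\phi$ to the generating subspace $L^\vee|_P$ is an $\rr$-linear functional on the finite-dimensional vector space $L^\vee|_P$, hence corresponds, via the canonical pairing $(L^\vee|_P)^\vee = L|_P$, to a unique element $l\in L|_P$. Two algebra morphisms out of $\Sym L^\vee|_P$ that agree on the generators $L^\vee|_P$ must agree, so $\bar\phi = \ev_l$ and consequently $\phi = \ev_l$. This produces the inverse to the evaluation map. Injectivity of $l\mapsto \ev_l$ is then automatic: distinct base points are separated by $C^\infty(M)$, and for fixed $P$, distinct elements $l\in L|_P$ define distinct linear functionals on $L^\vee|_P$, hence distinct restrictions of $\ev_l$ to $L^\vee$.

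The only genuine difficulty is the input from step one, i.e.\ the fact that unital $\rr$-algebra morphisms $C^\infty(M)\to \rr$ are exactly point evaluations; everything else is a straightforward application of the universal properties of quotients and of $\Sym$.
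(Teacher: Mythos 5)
Your proof is correct and follows essentially the same route as the paper's: both extract the base point $P$ from the restriction to $C^\infty(M)$ via Milnor's point-evaluation theorem and then recover $l\in L|_P$ from the behaviour on the generators $L^\vee$. The only difference is cosmetic --- you localize first (quotient by $\mathfrak{m}_P\cdot\Gamma(M,\A)$, then use the universal property of $\Sym$ on the fiber), whereas the paper restricts to generators first and identifies the $C^\infty(M)$-linear functionals over $\ev_P$ with fiber elements, which amounts to the same bump-function/locality fact.
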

\begin{pf}
It is clear that $\ev_l$ is an algebra morphism. Conversely, let $\phi: \Gamma(M,\A) \to \rr$ be an algebra morphism. Then $\phi$ is uniquely determined by its restriction to the generators $\Gamma(M,\O_M) \oplus \Gamma(M,L^\vee)$. Decomposing the restriction according to degrees, we have
\begin{items}
\item
an algebra morphism $\phi_0: \Gamma(M,\O_M) \to \rr$, and
\item
$\phi_i: \Gamma(M,{L^i}^\vee) \to \rr$ such that 
$$\phi_i(ab) = \phi_0(a) \phi_i(b),$$ 
for any $a \in \Gamma(M,\O_M)$, $b \in \Gamma(M,{L^i}^\vee)$.  
\end{items}
It is well-known that such an algebra morphism $\phi_0$ is of the form $\ev_P$ for some $P\in M$, and such a map $\phi_i$ is equal to $\ev_l$ for some $l \in L^i|_P$.
\end{pf}
 
The shifted tangent bundle $TL[-1]$ can be understood as a mapping space in the following way.

\begin{prop}\label{prop:PathTM}
There is a bijection between $TL[-1]$ and the set of algebra morphisms from $\Gamma(M,\A)$ to $\Gamma(\ast, \rr[1]) \cong \rr \oplus \rr[-1]$. 
\end{prop}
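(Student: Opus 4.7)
The plan is to decompose an ungraded algebra morphism $\phi: \Gamma(M,\A) \to \rr \oplus \rr \cdot dt$ into its scalar and $dt$-components, and then reduce to the preceding lemma. Since $dt$ has odd degree $+1$, graded-commutativity forces $dt^2 = 0$, so the target is just the algebra of dual numbers $\rr[dt]/(dt^2)$. Writing $\phi = \phi_0 + \phi_1 \cdot dt$ with $\rr$-linear maps $\phi_0, \phi_1: \Gamma(M,\A) \to \rr$, multiplicativity of $\phi$ is equivalent to the pair of conditions that $\phi_0$ is an $\rr$-algebra morphism and that $\phi_1$ satisfies the Leibniz rule over $\phi_0$:
\begin{equation*}
\phi_1(ab) = \phi_1(a)\phi_0(b) + \phi_0(a)\phi_1(b).
\end{equation*}

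By the preceding lemma, $\phi_0 = \ev_l$ for a unique $l \in L$, and what remains is to identify the space of $\phi_0$-derivations with the tangent space $T_lL$ of $L$ at $l$, where $L$ is viewed as an ungraded $C^\infty$-manifold (the total space of the graded vector bundle). This is a standard $C^\infty$-geometric fact: a derivation over $\ev_l$ is determined by its restriction to the generators $\O_M$ and $\Gamma(M, L^\vee)$ of $\Gamma(M, \A) = \Gamma(M, \Sym_{\O_M} L^\vee)$. The restriction to $\O_M$ supplies a tangent vector $v \in T_PM$ at $P = \pi(l)$, and, once the Leibniz rule over $\ev_l$ is applied to products $f\,\xi$ with $f \in \O_M$ and $\xi \in \Gamma(M,L^\vee)$, the restriction to $\Gamma(M,L^\vee)$ supplies a vertical component in $L|_P$; these two pieces assemble into a tangent vector in $T_lL$.

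Combining the two steps yields a bijection between algebra morphisms $\phi$ and pairs $(l,\phi_1) \in TL$. Finally, because the operation $[-1]$ shifts only the grading of the structure sheaf and leaves the underlying body of the graded manifold unchanged, the underlying sets of $TL$ and $TL[-1]$ coincide, completing the bijection claimed in the statement.

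The main conceptual step, and the only genuine subtlety, is the identification of algebraic derivations of the fiberwise polynomial algebra $\Gamma(M,\A)$ with geometric tangent vectors on $L$; this relies on the same $C^\infty$-analytic input already implicit in the preceding lemma (namely that $\rr$-algebra morphisms and derivations of sheaves of smooth functions are automatically $C^\infty$-local). Once that input is taken for granted, the rest of the argument is a formal decomposition mirroring the classical description of the tangent bundle via morphisms to the dual numbers.
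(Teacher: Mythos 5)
Your proof is correct and follows essentially the same route as the paper: split the morphism into its $\rr$- and $dt$-components, invoke the preceding lemma to write the degree-zero part as $\ev_l$, and recognize the other component as a derivation over $\ev_l$, i.e.\ a tangent vector in $TL|_l$. The only difference is that you spell out the derivation-to-tangent-vector identification on generators (and the $dt^2=0$ remark), which the paper's proof simply asserts.
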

\begin{pf}
Let $\psi:\Gamma(M,\A) \to \rr \oplus \rr[-1]$ be any algebra morphism, and let $\psi^0$ and $\psi^1$ be the compositions of $\psi$ followed by the projections onto $\rr$ and $\rr[-1]$, respectively. Since $\psi$ is algebra morphism, so is $\psi^0$. Thus $\psi^0 = \ev_l$ for some $l\in L$. 

Since $\psi$ is an algebra morphism, from the algebra morphism property
$$
(\psi^0 +\psi^1)(ab) = (\psi^0 +\psi^1)(a)(\psi^0 +\psi^1)(b),
$$
it follows that 
\begin{align*}
\psi^1(a b)&  =  \psi^1(a) \psi^0(b) + \psi^0(a) \psi^1(b) \\
&=  \psi^1(a) \ev_l(b) + \ev_l(a) \psi^1(b)
\end{align*}
for any $a, \, b \in \Gamma(M,\A)$. 
Therefore $\psi^1$ can be identified with a tangent vector in $TL|_l$. This completes the proof. 
\end{pf}

Identifying mapping spaces with algebra morphisms of function algebras in the converse direction, we therefore have that
\begin{equation}
\label{eq:Map(R[1],M)}
\Map(\rr[1],\M) \cong T\M[-1].
\end{equation}

Note that \eqref{eq:Map(R[1],M)} is well-known. Here we  follow
the algebraic approach as in
\cite[Section 3.2.2]{helein2020introduction}.

\subsection{Motivation: AKSZ} 
Recall the following folklore concerning graded manifolds
 and dg manifolds following AKSZ \cite{MR1432574,MR2819233,2003math......7303K}. Here we follow \cite{MR2819233}
closely.  Let $\N$ and $\M$ be graded manifolds. Then
 $\Map (\N, \M)$ is a (usually infinite dimensional) graded manifold satisfying
\begin{align*}
\Mor(\Z\times \N, \M) \cong \Mor(\Z,\Map(\N, \M)).
\end{align*}

If, moreover, both $\N$ and  $\M$ are dg manifolds, then  homological
vector fields on $\N$ and $\M$ induce a homological vector field $Q_\mapnm$
so that $\big(\Map(\N, \M), \  Q_\mapnm\big)$ is a dg manifold.

Indeed, we have
\begin{align*}
Q_\mapnm:=\phi^L ( Q_\N ) + \phi^R (Q_\M),
\end{align*}
where $Q_\N$ and  $Q_\M$ denote the  homological vector fields
on   $\N$ and $\M$, respectively, and $\phi^L$ and  $\phi^R$
are the natural morphisms:
\begin{align*}
\xymatrix{\calx(\N)  \ar[rrr]^-{\phi^L} &&& \calx(\Map(\N, \M)) &&& \calx(\M) \ar[lll]_-{\phi^R}.}\end{align*}

Now let $I=(a,b)$ be a fixed open interval containing $[0,1]$ as in Section~\ref{sec:ShortGeodesic}. Let $\N=TI[1]$ be the natural dg manifold with
the homological vector field $Q_\N$ being the
de Rham differential, and $\M=(M,L,\lambda)$ be an $L_\infty$-bundle
 with homological vector field $Q$ on $\M$. Then it is known
\cite{MR2819233, 2003math......7303K} that, as a graded manifold,
\begin{eqnarray*}
\mapnm&=&\Map (TI[1] , \ \M)\\
&=&\Map (I\times \rr [1], \ \M)\\
&=& \Map \big(I, \Map ( \rr [1], \ \M) \big)\\
&\cong& \Map \big(I, T\M[-1]\big)\\
&=&PT\M[-1]\\
& \cong& T(P\M)[-1],
\end{eqnarray*}
where $P(\argument)=\Map(I,\argument)$ denotes the path space. See also \eqref{eq:Map(R[1],M)} and Proposition~\ref{prop:PathTM}.

Moreover, $\phi^L ( Q_\N )=\iota_\tau$, where $\tau \in \calx (P\M)$
is the tautological vector field on  $P\M$,
and $\phi^R (Q)=\widehat{PQ}[-1]$, the shifted complete lift  on $T(P\M)[-1]$ of
the homological vector field $PQ\in \calx (P\M)$.
As  a consequence, it follows that
 $\big(T(P\M)[-1], \ \iota_\tau+\widehat{PQ}[-1]\big)$
is an (infinite dimensional)  dg manifold. 

We summarize the
discussion above in the following

\begin{prop}
Let  $\tau \in \calx (P\M)$ be the tautological vector field on  $P\M$,
then $\big(T(P\M)[-1], \ \iota_\tau+\widehat{PQ}[-1]\big)$
is a  dg manifold.
\end{prop}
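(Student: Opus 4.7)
The goal is to verify that the degree $1$ vector field $\iota_\tau+\widehat{PQ}[-1]$ on $T(P\M)[-1]$ satisfies $[\iota_\tau+\widehat{PQ}[-1],\iota_\tau+\widehat{PQ}[-1]]=0$. My plan is to leverage the AKSZ identification $\mapnm\cong T(P\M)[-1]$ recalled immediately before the proposition (applied to $\N=TI[1]$), under which the vector field $\iota_\tau+\widehat{PQ}[-1]$ is precisely the canonical mapping-space field $Q_\mapnm=\phi^L(Q_\N)+\phi^R(Q_\M)$. Hence the proposition reduces to the general AKSZ fact that $Q_\mapnm$ is always homological on $\Map(\N,\M)$ whenever $\N$ and $\M$ are dg manifolds.

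To establish that general fact, I would expand the self-bracket as
\[
[Q_\mapnm,Q_\mapnm]=\phi^L([Q_\N,Q_\N])+2[\phi^L(Q_\N),\phi^R(Q_\M)]+\phi^R([Q_\M,Q_\M]),
\]
using that both $\phi^L$ and $\phi^R$ are morphisms of graded Lie algebras. The outer two terms vanish because $Q_\N$ is the de Rham differential on $TI[1]$ and $Q_\M$ is a homological vector field on $\M$ by hypothesis. For the middle term, the essential input is that $\phi^L$ and $\phi^R$ arise from the two commuting actions on $\mapnm$, namely pre-composition on the source $\N$ and post-composition on the target $\M$; commutativity of these actions yields $[\phi^L(X),\phi^R(Y)]=0$ for every $X\in\calx(\N)$ and $Y\in\calx(\M)$.

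An alternative, more explicit route avoids the AKSZ formalism and checks each of the three summands of $[\iota_\tau+\widehat{PQ}[-1],\iota_\tau+\widehat{PQ}[-1]]$ directly. First, $[\iota_\tau,\iota_\tau]=0$ by the standard fact that the interior product with a single (graded) vector field squares to zero. Next, $[\widehat{PQ}[-1],\widehat{PQ}[-1]]$ vanishes because the complete lift preserves Lie brackets \cite{Mehta}, so it reduces to the shift of $\widehat{[PQ,PQ]}$; and $[PQ,PQ]=0$ by functoriality of the path-space construction applied to $[Q_\M,Q_\M]=0$. Finally, $[\iota_\tau,\widehat{PQ}[-1]]$ equals, via a Cartan-type identity on $T(P\M)[-1]$, a contraction with $[\tau,PQ]\in\calx(P\M)$; and $[\tau,PQ]=0$ because $\tau$ is the tautological field differentiating a path in the parameter $t$, while $PQ$ is pointwise application of the $t$-independent field $Q$, so the two derivations of functions on $P\M$ manifestly commute.

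The principal obstacle I foresee is not the bracket identity itself, which is essentially formal, but the rigorous interpretation of all these objects in the infinite-dimensional setting. The mapping space $P\M=\Map(I,\M)$ is an infinite-dimensional graded manifold, and one must work within a suitable convenient category (in the sense of Fr\"olicher, cf.~\cite{MR583436}) so that $\phi^L$, $\phi^R$, and the complete-lift construction $\widehat{(\cdot)}[-1]$ are well-defined operations producing honest graded vector fields on $T(P\M)[-1]$ with the required functorial and bracket-preserving properties. Once that foundational layer is in place, the computation above goes through verbatim.
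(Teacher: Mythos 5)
Your proposal is correct and, in its second (explicit) route, coincides with the paper's own argument: the paper derives the statement from the computation in the proof of Proposition~\ref{pro:TM3}, namely $[\iota_\tau+\LL_{PQ},\iota_\tau+\LL_{PQ}]=2[\iota_\tau,\LL_{PQ}]=\iota_{[\tau,PQ]}=0$, with $[\tau,PQ]=0$ justified exactly as you do (the flow of $\tau$ is reparameterization of paths, which preserves the path $L_\infty$-bundle and hence $PQ$; cf.\ Remark~\ref{rmk:tauQ}). Your first, AKSZ-style route is what the paper presents only as heuristic motivation preceding the proposition, and you rightly flag that its rigor hinges on the infinite-dimensional foundations, which is precisely why the paper falls back on the direct argument.
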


\subsection{Path space of $L_\infty$-bundles}

Indeed,  the fact above can be derived from a direct argument.  The flow
of $\tau$ is simply just a linear re-parameterization of the path space,
which preserves the  path  $L_\infty$-bundle $P\M=(PM, PL, P\lambda)$.
It thus follows that 
\begin{equation}
[\tau, \ PQ]=0,
\end{equation}
where $PQ$ denotes the corresponding homological vector field on
$P\M$. See Remark  \ref{rmk:tauQ}.

 From the proof of Proposition
\ref{pro:TM3}, it follows that $\iota_\tau+\widehat{PQ}[-1]$
is indeed a homological vector field on $T(P\M)[-1]$.

It is simple to see that $\tau \in \calx (P\M)$ is indeed a linear
vector field on the underlying vector bundle $\pi: PL\to PM$,  and
$$\pi_* \tau =D \in \calx (PM),$$
where $D$ denotes the  tautological vector field on  $PM$.
Choose a linear connection $\nabla$ on $L\to M$, which induces a linear connection on
$PL\to PM$. Now we can apply Proposition \ref{pro:TM3} formally
to the path  $L_\infty$-bundle $P\M=(PM, PL, P\lambda)$ to obtain
the path space of shifted tangent bundle.

We need a lemma first.  

Given a path $l: I\to L$  in $L$, let $a(t)=\pi (l(t))$, $t\in I$,
be its corresponding path in $M$. Then $\dot{l}(t)\in T_{l(t)}L$
and 
$$\pi_* (\dot{l}(t))=\dot{a} (t)\in T_{a(t)}M.
$$

By $\widehat{\dot{a} (t)}|_{\dot{l}(t)}\in T_{l(t)}L$, we denote
the horizontal lift of $\dot{a} (t)\in T_{a(t)}M$ at 
$l(t)$. 

The following lemma is standard. See~\cite[Theorem~12.32]{MR2572292} or~\cite[page~114]{MR0152974}.

\begin{lem}
$$\dot{l}(t) =\widehat{\dot{a} (t)}|_{\dot{l}(t)}+\nabla_{\dot{a} (t)}l(t)$$
as tangent vectors at  $T_{l(t)}L$
\end{lem}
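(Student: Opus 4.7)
The plan is to exploit the connection-induced splitting $TL = H \oplus V$ of the tangent bundle of $L$, where $H$ is the horizontal distribution determined by $\nabla$ and $V = \ker \pi_\ast$ is the vertical distribution. At any point $l(t) \in L$, this yields a direct sum decomposition
$$T_{l(t)}L = H_{l(t)} \oplus V_{l(t)}\,,$$
with $\pi_\ast$ restricting to an isomorphism $H_{l(t)} \xrightarrow{\cong} T_{a(t)}M$ and $V_{l(t)}$ canonically isomorphic to $L|_{a(t)}$ (via the identification of the fiber's tangent space with the fiber itself, since fibers are vector spaces).

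First I would apply $\pi_\ast$ to $\dot l(t)$: by hypothesis $\pi_\ast \dot l(t) = \dot a(t)$, which forces the horizontal component of $\dot l(t)$ under the splitting to be exactly $\widehat{\dot a(t)}|_{l(t)}$, the unique horizontal lift of $\dot a(t)$ at $l(t)$. Consequently
$$\dot l(t) = \widehat{\dot a(t)}|_{l(t)} + V(t)\,,$$
where $V(t) \in V_{l(t)} \cong L|_{a(t)}$ is the vertical component. The content of the lemma is the identification $V(t) = \nabla_{\dot a(t)} l(t)$.

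To establish this, I would use the parallel transport characterization of $\nabla$. Let $\tau_{t_0,t}: L|_{a(t_0)} \to L|_{a(t)}$ denote parallel transport along $a$ with respect to $\nabla$, and set $\tilde l(s) = \tau_{t,s}^{-1} l(s) \in L|_{a(t)}$, so that by definition
$$\nabla_{\dot a(t)} l(t) = \frac{d}{ds}\Big|_{s=t} \tilde l(s)\,.$$
On the other hand, the curve $s \mapsto \tau_{t,s}\bigl(\tilde l(t)\bigr)$ is, by construction, a horizontal lift of $a$ passing through $l(t)$, so its tangent vector at $s=t$ is $\widehat{\dot a(t)}|_{l(t)}$. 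Writing
$$l(s) = \tau_{t,s}\bigl(\tilde l(s)\bigr) = \tau_{t,s}\bigl(\tilde l(t)\bigr) + \tau_{t,s}\bigl(\tilde l(s) - \tilde l(t)\bigr)$$
and differentiating at $s=t$, the first summand contributes $\widehat{\dot a(t)}|_{l(t)}$ while the second is purely vertical with value $\frac{d}{ds}|_{s=t}\tilde l(s) = \nabla_{\dot a(t)} l(t)$, yielding the decomposition.

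The only subtle point is legitimizing the termwise differentiation and the identification of the vertical tangent space $V_{l(t)}$ with $L|_{a(t)}$; both are standard but worth noting explicitly. As a sanity check (and the most concrete route if one prefers) I would verify the formula in a local frame $\{e_i\}$ of $L$ with Christoffel symbols $\Gamma^i_{jk}$ defined by $\nabla_{\partial_k} e_j = \Gamma^i_{jk} e_i$: writing $l(t) = l^i(t) e_i(a(t))$ and using fiber coordinates $(x^k, \xi^i)$ on $L$, one computes $\dot l(t) = \dot a^k \partial_{x^k} + \dot l^i \partial_{\xi^i}$, the horizontal lift $\widehat{\dot a(t)}|_{l(t)} = \dot a^k \partial_{x^k} - \dot a^k \Gamma^i_{jk} l^j \partial_{\xi^i}$, and $\nabla_{\dot a(t)} l(t) = (\dot l^i + \dot a^k \Gamma^i_{jk} l^j) e_i$, whose sum reproduces $\dot l(t)$ exactly. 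The main obstacle is essentially notational rather than mathematical — keeping the canonical identifications $V_{l(t)} \cong L|_{a(t)}$ transparent so the equation makes literal sense in $T_{l(t)}L$.
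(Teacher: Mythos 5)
The paper itself gives no proof of this lemma: it simply declares it standard and cites Lee and Kobayashi--Nomizu, so your write-up supplies an argument where the paper has only a reference. Your route --- split $T_{l(t)}L$ into horizontal and vertical subspaces via $\nabla$, determine the horizontal component by applying $\pi_\ast$ (which forces it to be the horizontal lift $\widehat{\dot a(t)}|_{l(t)}$, the paper's notation $\widehat{\dot a(t)}|_{\dot l(t)}$ meaning exactly this by the definition given just above the lemma), and identify the vertical component with $\nabla_{\dot a(t)}l(t)$ via parallel transport --- is the standard argument and is correct in substance. One step is stated imprecisely: in the splitting $l(s)=\tau_{t,s}\bigl(\tilde l(t)\bigr)+\tau_{t,s}\bigl(\tilde l(s)-\tilde l(t)\bigr)$ the second summand is a curve lying over $a(s)$, not a curve inside the single fiber $L|_{a(t)}$, so its velocity is \emph{not} ``purely vertical'' ($\pi_\ast$ of it is $\dot a(t)$); the ``termwise differentiation'' is addition in the secondary vector bundle $TL\to TM$, not in $TL\to L$. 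The clean repair is the chain rule applied to the smooth map $F(s,v)=\tau_{t,s}(v)$ with $l(s)=F(s,\tilde l(s))$: the $s$-partial at $v=l(t)$ is the horizontal lift, while the fiber derivative is the derivative of the identity of $L|_{a(t)}$, hence the vertical vector corresponding to $\dot{\tilde l}(t)=\nabla_{\dot a(t)}l(t)$ under $V_{l(t)}\cong L|_{a(t)}$. Alternatively, the coordinate computation you append (with $\nabla_{\partial_k}e_j=\Gamma^i_{jk}e_i$, horizontal lift $\dot a^k\partial_{x^k}-\dot a^k\Gamma^i_{jk}l^j\partial_{\xi^i}$, and $\nabla_{\dot a}l=(\dot l^i+\dot a^k\Gamma^i_{jk}l^j)e_i$) is already a complete, self-contained proof, so the lemma is established either way.
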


Applying Proposition \ref{pro:TM3} formally to this situation,
 we obtain the following:

\begin{prop}
\label{pro:TM4}
Let $\M=(M,L,\lambda)$ be an $L_\infty$-bundle, and 
 $\nabla$  a  linear connection on $L$. Then
 $$(PM, PTM \, dt\, \oplus PL\, dt\,\oplus PL, D+\delta+ P\mu)$$ 
 is a bundle of curved $L_\infty[1]$-algebras (of infinite dimension), where $\mu=\lambda+\tilde\lambda+\nabla\lambda\,$ is as in
Proposition \ref{pro:TM1}, and $\delta: PL\to PL \, dt $ is the degree $1$ map
defined by  $\delta (l(t))=(-1)^{|l|} \nabla_{\dot{a} (t)}l(t) \, dt$.
\end{prop}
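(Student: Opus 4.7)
My plan is to apply Proposition~\ref{pro:TM3} formally to the path $L_\infty$-bundle $P\M=(PM,PL,P\lambda)$, using the tautological vector field $\tau\in\calx(PL)$ on the path space of $L$. First, I would verify the hypotheses of Proposition~\ref{pro:TM3}. The linear connection $\nabla$ on $L\to M$ pulls back pointwise to a linear connection on $PL\to PM$ (at a path $l:I\to L$ over $a:I\to M$, the vertical/horizontal splitting of $T_{l(t)}L$ induces one fiberwise in $t$). The vector field $\tau$ is the tautological one, so $\tau|_{l}=\dot{l}\in T_{l(\argument)}(PL)$ and $\pi_{\ast}\tau=D\in\calx(PM)$; since $\tau$ is induced by infinitesimal reparameterization of paths, which is a linear operation on the fibers of $\pi:PL\to PM$, $\tau$ is a degree $0$ linear vector field in the sense of Proposition~\ref{pro:TM2}.

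Next I would check the compatibility $[\tau,PQ]=0$, where $PQ$ is the homological vector field on $P\M$ induced by $Q$. Since the (local) flow $\phi_s$ of $\tau$ is just reparameterization $a(\argument)\mapsto a(\argument+s)$, lifted to $PL$ in the obvious way, and since $PQ$ is defined pointwise in $t\in I$ from $Q$, the flow $\phi_s$ is a family of automorphisms of $P\M$. By the discussion in Remark~\ref{rmk:tauQ}, this is exactly the commutator condition \eqref{eq:tauQ}. Hence Proposition~\ref{pro:TM3} applies and yields a bundle of curved $L_\infty[1]$-algebras on $(PM,\,PTM\,dt\oplus PL\,dt\oplus PL,\,\nu+P\mu)$, where $P\mu=P\lambda+\widetilde{P\lambda}+\nabla(P\lambda)$ comes from Proposition~\ref{pro:TM1}, and $\nu=X+\tilde{\delta}$ with $X=\pi_{\ast}\tau=D$ comes from Proposition~\ref{pro:TM2}. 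Pointwise fiberwise, the operations $P\mu$ are just $\mu$ from Proposition~\ref{pro:TM1} applied at each $t\in I$, so $P\mu$ agrees with the $\mu$-term in the statement.

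The remaining step is to identify $\tilde{\delta}$ explicitly with the map $\delta(l(t))=(-1)^{|l|}\nabla_{\dot{a}(t)}l(t)\,dt$. By Lemma~\ref{lem:Rome} applied to $\tau$, we have $\tilde{\delta}(l)=\tau|_{l}-\widehat{D(\pi(l))}|_{l}=\dot{l}-\widehat{\dot{a}}|_{\dot{l}}$, viewed as a vertical tangent vector, i.e.\ an element of $PL\,dt$. The key lemma quoted just before the statement,
\[
\dot{l}(t)=\widehat{\dot{a}(t)}|_{\dot{l}(t)}+\nabla_{\dot{a}(t)}l(t),
\]
gives precisely $\tilde{\delta}(l(t))=\nabla_{\dot{a}(t)}l(t)\,dt$, and the sign $(-1)^{|l|}$ arises from the Koszul convention used in passing from a vertical tangent vector on $L$ to an element of $L\,dt=L[-1]$ under the identification $\phi^{\nabla}$ of \eqref{eq:phinabla}. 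This identifies $\nu$ with $D+\delta$ in the notation of the statement, completing the verification.

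The main conceptual obstacle is justifying Proposition~\ref{pro:TM3} in the infinite-dimensional setting of $PM$ and $PL$: formally, the derivation of the $L_\infty[1]$-operations on $TM\,dt\oplus L\,dt\oplus L$ from $\iota_{\tau}+\LL_{Q}$ in Proposition~\ref{pro:TM1} and Proposition~\ref{pro:TM2} is coordinate- and fiberwise, so it goes through pointwise in $t\in I$; the only genuinely infinite-dimensional ingredient is $\tau$ itself, whose contribution we have just computed. Since the statement only claims existence of a curved $L_\infty[1]$-structure (not smoothness in any particular infinite-dimensional sense), the fiberwise bracket relations $(D+\delta+P\mu)\circ(D+\delta+P\mu)=0$ follow from their finite-dimensional counterparts in Propositions~\ref{pro:TM1}--\ref{pro:TM3} applied at each $t\in I$, together with the commutator $[\tau,PQ]=0$ just verified.
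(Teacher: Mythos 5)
Your proposal is correct and follows essentially the same route as the paper: the paper likewise applies Proposition~\ref{pro:TM3} formally to $P\M=(PM,PL,P\lambda)$ with the tautological linear vector field $\tau$ (whose reparameterization flow preserves $P\M$, giving $[\tau,PQ]=0$ as in Remark~\ref{rmk:tauQ}), uses the connection induced on $PL\to PM$, and identifies $\delta$ via the standard decomposition $\dot{l}(t)=\widehat{\dot{a}(t)}+\nabla_{\dot{a}(t)}l(t)$. No substantive differences.
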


As a consequence,  for every path $a:I\to M$ in $M$, we obtain an induced curved $L_\infty [1]$-structure in the vector space $\Gamma(I,a^\ast(TM\oplus L)\,dt\oplus a^\ast L)$ by restricting to the fiber $a\in PM$.

As $\delta^2=0$, it induces the structure of a complex on $\pathast$.
Also note  that $D|_a$ is the derivative $a'\, dt \in \Gamma(I,a^\ast TM)\,dt$. 
 It is simple to see that the resulting
 $L_\infty [1]$-operations on $\Gamma(I,a^\ast(TM\oplus L)\,dt\oplus a^\ast L)$
  are exactly those in Proposition \ref{pro:paris}.

 We  summarize it in the following

\begin{cor}
\label{cor:paris}
Let $\delta$ be the covariant derivative of the pullback linear connection $a^\ast \nabla$ over $a \in PM$.
The sum $a'\, dt \, +a^\ast\mu$ is a curved $L_\infty[1]$-structure
on the complex $\big(\Gamma(I,a^\ast T\M[-1]), \delta \big) = \big( \pathast ,    \delta\big)$.
\end{cor}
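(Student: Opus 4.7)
The plan is to derive Corollary~\ref{cor:paris} as a direct fiberwise specialization of Proposition~\ref{pro:TM4}, rather than redoing the direct verification already given in the proof of Proposition~\ref{pro:paris}. Since Proposition~\ref{pro:TM4} asserts that
$$\bigl(PM,\; PTM\,dt \oplus PL\,dt \oplus PL,\; D+\delta+P\mu\bigr)$$
is a bundle of curved $L_\infty[1]$-algebras over the path space $PM$, the curved $L_\infty[1]$-axioms hold pointwise over each $a \in PM$. So the first step is simply to fix a path $a:I\to M$ and restrict all structures to the fiber above $a$.

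Next, I would identify each piece of the fiber structure with the data appearing in the statement. The fiber of $PTM\,dt \oplus PL\,dt \oplus PL$ over $a$ is precisely $\Gamma(I,a^\ast(TM\oplus L)\,dt \oplus a^\ast L) = \pathast$. The tautological vector field $D$ evaluates at $a$ to the derivative $a'\,dt \in \Gamma(I,a^\ast TM)\,dt$, so its contribution to the curved $L_\infty[1]$-structure is exactly the curvature term $a'\,dt$. The degree $1$ map $\delta:PL \to PL\,dt$ of Proposition~\ref{pro:TM4}, defined by $\delta(l(t)) = (-1)^{|l|}\nabla_{\dot a(t)}l(t)\,dt$, restricts on the fiber to the covariant derivative of the pullback connection $a^\ast\nabla$, matching the $\delta$ in the corollary. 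Finally, $P\mu$ evaluates on $a$ to $a^\ast \mu = a^\ast\lambda + a^\ast\tilde\lambda + a^\ast(\nabla\lambda)$, where $\mu$ is as in Proposition~\ref{pro:TM1}.

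Assembling these pieces, the fiberwise curved $L_\infty[1]$-structure on $\pathast$ is exactly $\delta + a'\,dt + a^\ast\mu$, which is the claim of the corollary. No further computation is required, since the Maurer-Cartan equation $(\delta + a'\,dt + a^\ast\mu)\circ(\delta + a'\,dt + a^\ast\mu)=0$ is inherited as a pointwise consequence of the global curved $L_\infty[1]$-axioms in Proposition~\ref{pro:TM4}.

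The main obstacle, conceptually, is already handled before the corollary: it is the verification in Propositions~\ref{pro:TM1}--\ref{pro:TM4} that the algebraic combination $D + \delta + P\mu$ really defines a curved $L_\infty[1]$-structure on an infinite-dimensional path space, and in particular that the cross term $\delta\circ a^\ast\lambda + a^\ast\tilde\lambda\circ\delta + a^\ast(\nabla\lambda)\circ a'\,dt$ vanishes — precisely the identity checked by direct computation in the proof of Proposition~\ref{pro:paris}. Once those are in place, the corollary itself is a one-line consequence of fiberwise evaluation.
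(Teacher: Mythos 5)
Your proposal matches the paper's own treatment of Corollary~\ref{cor:paris}: the paper likewise obtains it by restricting the bundle of curved $L_\infty[1]$-algebras of Proposition~\ref{pro:TM4} to the fiber over a path $a\in PM$, identifying $D|_a=a'\,dt$, the fiberwise $\delta$ with the covariant derivative of $a^\ast\nabla$, and $P\mu|_a$ with $a^\ast\mu$. You also correctly locate where the substantive verification lives (Propositions~\ref{pro:TM1}--\ref{pro:TM4} and the direct computation in Proposition~\ref{pro:paris}), so nothing is missing.
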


\bibliographystyle{plain}

\bibliography{ref_CatFibObj}

\Addresses

\end{document}